\documentclass[12pt]{amsart}
\usepackage[letterpaper,top=3cm,bottom=3cm,left=3cm,right=3cm,marginparwidth=1.75cm]{geometry}
\subjclass[2020]{42B25, 43A80}
\keywords{Maximal functions, generalized H-type groups, uniform volume estimates}
\usepackage{amsmath}
\usepackage{esint}
\usepackage{amssymb}
\usepackage{cite}
\usepackage{esint}
\usepackage{bbm}
\usepackage{amsthm}
\usepackage{amsfonts}
\usepackage{mathrsfs}
\usepackage{amscd}
\usepackage{amssymb} 
\usepackage{cancel}
\usepackage{latexsym}
\usepackage{eufrak}
\usepackage{euscript}
\usepackage{epsfig}
\usepackage{graphicx}
\usepackage{appendix}
\usepackage[colorlinks=true, allcolors=blue]{hyperref}
\usepackage{tcolorbox}
\numberwithin{equation}{section}
\newtheorem{theorem}{Theorem}[section]

\newtheorem{lemma}[theorem]{Lemma}
\newtheorem{corollary}[theorem]{Corollary}
\newtheorem{assumption}[theorem]{Assumption}

\newtheorem{remark}[theorem]{Remark}

\newtheorem{prop}[theorem]{Proposition}

\newcommand{\qe}{\end{equation}}
\newcommand{\R}{{\mathbb R}}
\newcommand{\HH}{{\mathbb G}}
\newcommand{\N}{{\mathbb N}}

\newcommand{\sS}{\mathrm S}
\newcommand{\G}{\mathbb{G}(2n,m, \mathbb{U}, \mathbb{W})}
\newcommand{\sumj}{\sum_{j=1}^{\ell}}
\newcommand{\xf}{x_{(1)}}
\newcommand{\xj}{x_{(j)}}
\newcommand{\xl}{x_{(\ell)}}

\newcommand{\A}{{\mathbb A}}

\newcommand{\CCC}{\mathcal{C}_{\HH}}

\newcommand{\lal}{ \lambda^T \mathbb{A} \lambda}

\newcommand{\Tg}{\mathfrak{T}_{x, h}}
\def\Sa{\Sigma_1(R)}
\def\Sb{\Sigma_2(R)}

\def\SBb{\Sigma_2(1)}
\def\DB{d_{\mathbb{B}}}
\def\Fx{\mathbf{F}_x}
\def\K{\varrho}
\def\wX{\widetilde{\mathrm{X}}}
\def\GG{\mathfrak{G}}
\def\CG{\mathcal{G}}
\def\TA{\mathfrak{W}_{x, h}}
\def\TB{\mathfrak{V}_{x, h}}
\def\MC{\mathscr{C}_{x,h}}
\def\rr{\mathrm{r}}

\begin{document}

\title[Uniform Volume estimates]{Uniform volume estimates and maximal functions on generalized Heisenberg-type groups}
\author{Cheng Bi,  Hong-Quan Li}

\maketitle

\begin{abstract}
	On generalized Heisenberg-type groups  $\G$, we give uniform volume estimates for the ball defined by a large class of Carnot-Carath\'{e}odory distances, and establish weak (1, 1)  $O(C^m \, n)$-estimates for associated centered Hardy-Littlewood maximal functions, extending the results in \cite{BLZ25}. As a by-product, we establish uniformly volume doubling property on Heisenberg groups for a class of left-invariant Riemannian metrics.
\end{abstract}

\section{Introduction}
	Recently, the phenomenon of uniformly volume doubling was studied by N. Eldredge, M. Gordina and L. Saloff-Coste \cite{EGS18}, \cite{EGS24}.  More precisely, let $\mathbb{X}$ be a finite-dimensional connected real Lie group. For a left-invariant Riemannian metric $\mathbf{g}$ on $\mathbb{X}$, denote by $ d_\mathbf{g}$ the induced Riemannian distance, by $ \nu_\mathbf{g}$ the corresponding Riemannian measure and by $B_{\mathbf{g}}(z,r)$ the open ball centered at $z \in \mathbb{X}$ with radius $r > 0$. The doubling constant $ D_\mathbf{g}$ is defined by
	\begin{equation*}
		D_\mathbf{g}  :=  \sup_{z \in \mathbb{X}, \, r>0} \frac{\nu_\mathbf{g} (B_{\mathbf{g}}(z , 2r))}{ \nu_\mathbf{g}(B_{\mathbf{g}}(z ,r))}.
	\end{equation*}
	  We say $\mathbb{X}$ is {\emph{uniformly doubling}} if
	$D(\mathbb{X}) := \sup_{ \mathbf{g} \in \mathcal{L}_{\mathbb{X}} } D_\mathbf{g} <\infty,$ where $\mathcal{L}_{\mathbb{X}}$ is the set of all left-invariant Riemannian metrics on $\mathbb{X}$. 
    As noted in \cite{EGS18}, \cite{EGS24}, this property is particularly significant, as it implies a series of analytical consequences such as a uniform Poincar\'{e} inequality \cite[Theorem 8.2]{EGS18} and uniform two-sided heat kernel estimates \cite[Theorem 8.10]{EGS18}. 
    
    As noted in \cite[p. 1324]{EGS18},  the Euclidean spaces and the tori are uniformly doubling. Besides, the authors of \cite{EGS18} conjectured that every connected real compact Lie group is uniformly doubling. They proved this conjecture for the three-dimensional special unitary group SU(2) \cite[Theorem 1.2]{EGS18} as well as the quotient groups of SU(2)$\times \R^n$  (including non-compact ones) \cite[Theorem 1.1]{EGS24}. However, to the best of our knowledge, there {\color{blue}are} no other nontrivial examples  for which the conjecture holds. 
       
    A natural question is whether the famous Heisenberg groups are uniformly doubling. These groups are non-compact and do not admit uniform lower and upper bounds on the Ricci curvatures (see e.g. \cite{M76}) even within the class of the left-invariant Riemmannian metrics considered in this work. Therefore, the celebrated Bishop-Gromov volume comparison theorem is not applicable. In this paper, we provide a partial answer to this question by establishing the uniform doubling property for very natural left-invariant Riemannian metrics. Indeed, in the broader setting of generalized H-type groups (see Section \ref{ssec-geneH}), we derive explicit formulas describing the volume growth of balls defined by a large family of Carnot-Carath\'{e}odory distances; see Theorem \ref{thm-voll} below. 

    With volume estimates in hand, we proceed to investigate  an extension of Stein-Str\"{o}mberg's $O(n)$ bound \cite{SS83} on the centered Hardy-Littlewood maximal functions. Our results are Theorems \ref{thm-main1} and \ref{thm-2}, which also extend the corresponding results in \cite{BLZ25}. For other related works, we refer to \cite{Li09,Li13,LL12,LQ14, NT10}.  
    
	This article is organized as follows. In Section \ref{sec-nota}, we recall some basic facts and state our main results. In Section \ref{sec-vol}, we derive uniform volume estimates. As an application, we study the weak (1, 1) estimates for the associated maximal functions in Section \ref{sec-app}.
		
\section{Settings and Statement of results}\label{sec-nota}

\subsection{Notation} 

Throughout this paper, we use $|\cdot|$ and $\cdot$ to denote the length and the inner product on Euclidean spaces, respectively. By a slight abuse of notation, we will also use $|E|$ to denote the Lebesgue measure (or equivalently the Haar measure) of a measurable set $E$ and $\cdot$ to denote the group multiplication. Furthermore, we consider vectors in Euclidean spaces as column vectors, and denote by $\A^T$ the transpose of a matrix $\A$, by $B_{\R^k}(0, r)$ the usual Euclidean ball centered at the origin with radius $r > 0$, by
$\langle \cdot , \cdot \rangle$ the inner product  in $\mathbb{R}^{2n}$. Moreover, we write $\hat{\lambda} = \lambda/|\lambda|$ if $\lambda\in \R^k\backslash\{0\}$. We denote by $\nabla$ or  $\nabla_\lambda$ with $\lambda\in\R^k$ the usual gradient, and Hess$_\lambda$ the Hessian matrix.

\subsection{Generalized H-type groups}\label{ssec-geneH}

Let $n, m \in \mathbb{N}_+ =\{ 1,2,\cdots\}$ and $\mathbb{U} = \{ U^{(1)},\cdots, U^{(m)} \} $ be an $m$-tuple of linearly independent $(2n) \times (2n)$ skew-symmetric real matrices. Let $\mathbb{W}$ be a positive definite real matrix such that
\begin{equation*}
U(\lambda) \, U(\lambda') + U(\lambda') \, U(\lambda) = 2 \lambda \cdot \lambda' \, \mathbb{W}^2, \quad \forall \, \lambda,\lambda' \in \R^m,
\end{equation*} 
where $ U(\lambda) = \mathrm{i} \sum_{l=1}^m \lambda_l \, U^{(l)}$ for $\lambda =(\lambda_1,\cdots,\lambda_m).$ 
The generalized H-type groups $\HH = \G$ can be identical to $\R^{2n} \times \R^m$ endowed with the group law 
\begin{gather*}
(x, t) \cdot (x', t') := \left( x + x', \ t + t' + \frac{1}{2} \langle \mathbb{U} x , x' \rangle _{\star} \right),  \   x, x' \in  \R^{2 n}, \ t, t' \in \R^m,  \\
\mbox{with} \ \langle \mathbb{U} x, x' \rangle _{\star} = \left( \langle {U}^{(1)} x, x' \rangle, \, \cdots,\, \langle {U}^{(m)} x, x' \rangle \right) \in \R^m.
\end{gather*}

Note that the classical H-type group $\mathbb{H}(2n,m)$ is the special case where $\mathbb{W} = \mathbb{I}_{2n}$, and the generalized Heisenberg group corresponds to the case $m=1$. 

Set $ U^{ (l) } = \left( U^{ (l) }_{ \iota,q } \right)_{ 1 \leq \iota, q \leq 2n}$ for $1 \leq l \leq m$. The corresponding left invariant vector fields  (namely the canonical basis)  are defined by
\begin{equation*}
\mathrm{ X }_{ q } := \frac{ \partial }{\partial x_{q}} + \frac{1}{2} \sum_{ l = 1 }^{m} \left( \sum_{ \iota = 1 }^{ 2n } U^{ (l) }_{q , \iota} 
x_{\iota}  \, \frac{ \partial }{ \partial  t_{ l }} \right), \quad 1 \le q \le 2 n.
\end{equation*}
The canonical sub-Laplacian is $\Delta :=  \sum_{ q=1 }^{ 2n } \mathrm{ X }_{ q }^{ 2 }$.
We shall also consider the following general sub-Laplacian
\begin{equation*}
\Delta_{\mathbb{B}} = \Delta_{\G, \mathbb{B}}:= \Delta + \sum_{i=1}^m \left( \sum_{l=1}^m \mathbb{B}_{i,l} \mathrm{T}_l \right)^2, \quad \mbox{with} \quad \mathrm{T}_l = \frac{\partial}{\partial t_l } \ (1 \le l \le m), 
\end{equation*}
where $\mathbb{B} = ( \mathbb{B}_{i,l} )$ is an $m\times m$ real matrix. Note that the cases $\mathbb{B}=0$ and $\mathbb{B}=\mathbb{I}_m$ correspond to the canonical sub-Laplacian and the full Laplacian, respectively.

Let $g = (x, t) \in \HH$ and $o = (0, 0)$ be the identity element of $\HH$. The Haar measure $dg$ on $\HH$ coincides with the $(2n+m)$-dimensional Lebesgue measure. Denote by $d_{\mathbb{B}} = d_{\mathbb{W}, \mathbb{B}}$ the left-invariant Carnot-Carath\'{e}odory distance associated to $\Delta_\mathbb{B}$, cf. e.g. \cite[III 4]{VSC92}. By the left-translation invariance, we set in the sequel $d_{\mathbb{B}}(g) := d_{\mathbb{B}}(g, o)$. 

\subsection{Explicit expression of $\DB(g)$}

 Let $\ell \in \N_+.$ Up to an orthogonal transformation w.r.t $x$, we may assume
  \begin{equation}\label{equ-W}
   \mathbb{W} = \mbox{diag} \left\{ a_1  \mathbb{I}_{2k_1}, \, \cdots, \, a_{\ell} \, \mathbb{I}_{2k_{\ell}} \right\}, \  \mbox{with} \  0<a_1<\cdots< a_{\ell} , \  \sum_{j=1}^\ell k_j = n,\ k_j \in \N_+.
 \end{equation}
The case of $\ell =1$ corresponds to the classical H-type group, the result for which can be found in \cite{BLZ25}.  In the rest of this paper, we always assume $\ell\ge 2$.
It follows from the Radon-Hurwitz number that:
 \begin{equation}\label{equ-m}
  m \le 2 \min_{1\le j \le \ell }  \log_2 (4k_j), \qquad m + 1 \le  \min_{1\le j \le \ell } 2 k_j,
 \end{equation}
which will be used repeatedly. Similarly, we may suppose in addition:
\begin{align}
    \mathbb{A} := \mathbb{B}^T \, \mathbb{B} = \mbox{diag}\{ b_1, \cdots, b_m\} \quad \mbox{with all $b_j \ge 0$.}
\end{align}

In view of \eqref{equ-W}, we write in the sequel
\begin{equation*}
x=\left( \xf, \cdots, \xl \right) \quad \mbox{with} \quad \xj \in \R^{2k_j}, \quad  j = 1,\cdots, \ell.
\end{equation*}

As in \cite{BGG96} and \cite[chapter 10]{CCFI}, the convolution kernel of $e^{h \Delta_\mathbb{B}}$ ($h > 0$) is given by:
\begin{equation}
p_h(g) = \frac{h^{-n-m}}{(2\pi)^m (4\pi)^n} \int_{\R^m} \mathbf{V}(\lambda)\exp\left(-\frac{\Phi(g;\lambda)}{4h}\right)\,d\lambda,
\end{equation}
where
\begin{equation*}
\mathbf{V}(\lambda) = \prod_{j=1}^{\ell} \left(\frac{a_j |\lambda|}{\sinh(a_j |\lambda|)}\right)^{k_j},
\ \Phi(g;\lambda) = \sumj |\xj|^2 a_j |\lambda| \coth(a_j |\lambda|)-4 \mathrm{i} t\cdot \lambda  + 4 \lambda^T \mathbb{A} \lambda.
\end{equation*}

Set for $-\pi < r < \pi$ (cf. e.g. \cite[\S~3]{Li21})
\begin{align} \label{MU}
\mu(r) := -\frac{d}{d r} ( r \, \cot{r} ) = \frac{2r - \sin(2r)}{2\sin^2 r} = 2 \, r \sum_{j = 1}^{+\infty} \Big( (j \, \pi)^2 - r^2 \Big)^{-2}, \quad  \widetilde{\mu} (r) := \frac{\mu(r)}{4 r}.
\end{align}
The former is a strictly increasing diffeomorphism from $(-\pi,\pi)$ onto $\R$. 

The following result can be readily deduced from \cite{Li21}:
\begin{theorem}[\cite{Li21}]\label{thm-Fx}
(1) Let $\Omega_* := B_{\R^m}\left(0,\pi/a_\ell \right)$. Then we have:   
\begin{equation}\label{equ-dB}
d_{\mathbb{B}}(g)^2  =  \sup_{\tau\in\Omega_* } \phi(g;\tau),  \quad \phi(g;\lambda) :=  \sumj  (a_j |\lambda|) \cot{(a_j |\lambda|)} \, |\xj|^2 + 4 t \cdot \lambda - 4 \lambda^T \A \lambda.
\end{equation}

(2) Let $x\in\mathbb{R}^{2n}$ with $x_{(\ell)} \neq 0$. Then the following map $\Fx$ is a $C^{\infty}$-diffeomorphism from $\Omega_*$ to $\R^m$,
\begin{equation}\label{equ-Atheta}
t_l = \theta_l \left(  \sumj a_j^2 \, \widetilde{\mu} (a_j|\theta|) \, |\xj|^2  + 2 b_l \right), \quad l=1,\cdots,m.
\end{equation}
Furthermore, it holds for any $\theta = (\theta_1, \ldots, \theta_m) \in \Omega_*$ that
\begin{align}\label{equ-dBB}
\DB(x, \Fx(\theta))^2 = \sum_{j = 1}^{\ell} \left( \frac{a_j \, |\theta|}{\sin{(a_j \, |\theta|)}} \right)^2 \, |x_{(j)}|^2 + 4 \sum_{k = 1}^m b_k \theta_k^2.
\end{align}
\end{theorem}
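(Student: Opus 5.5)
The plan is to derive everything from the variational characterisation of the Carnot--Carath\'eodory distance on step-two groups, via the Hamilton--Jacobi / heat kernel asymptotics framework of \cite{Li21}, and then specialise to $\G$ using the explicit symbol $\Phi$. For part (1), I will first use Varadhan's formula $\DB(g)^2=-\lim_{h\to0^+}4h\log p_h(g)$ together with the integral representation of $p_h$. I will then deform the contour $\lambda\mapsto \mathrm{i}\tau$ with $\tau\in\Omega_*$. This is legitimate because $\mathbf V$ and $\Phi$ are analytic in $|\lambda|^2$ as long as $a_\ell|\tau|<\pi$, since all the $\sinh(a_j|\lambda|)$ stay nonvanishing. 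After the deformation, $\Phi(g;\mathrm{i}\tau)=\phi(g;\tau)$, using $a|\lambda|\coth(a|\lambda|)\big|_{\lambda=\mathrm{i}\tau}=a|\tau|\cot(a|\tau|)$.

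Next I will check concavity. The function $r\mapsto r\cot r$ is concave in $r^2$ on $(0,\pi)$; equivalently, $\tau\mapsto(a_j|\tau|)\cot(a_j|\tau|)$ is concave on $\Omega_*$, with Hessian controlled by $\mu$. Also $-4\lambda^T\A\lambda$ is concave and $4t\cdot\lambda$ is linear. Hence $\phi(g;\cdot)$ is concave on $\Omega_*$. When $x_{(\ell)}\neq0$ it tends to $-\infty$ at $\partial\Omega_*$, so it has a unique maximiser. The standard Laplace-method argument (upper bound via shifting the contour to the maximiser, lower bound via local Gaussian analysis) then gives $\DB(g)^2=\sup_{\Omega_*}\phi$. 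The degenerate case $x_{(\ell)}=0$ will follow by continuity of $\DB$ and lower semicontinuity of the supremum, approximating $g$ by points with $x_{(\ell)}\ne0$. This reduction is essentially \cite[Theorem~1.1-type]{Li21} applied to general step-two groups, which I would invoke rather than reprove.

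For part (2), I will compute $\nabla_\tau\phi$ directly. From $\frac{d}{dr}(r\cot r)=-\mu(r)$ and $\nabla|\tau|=\hat\tau$, I get
\begin{equation*}
\nabla_\tau\phi=-\sumj a_j\mu(a_j|\tau|)|\xj|^2\hat\tau+4t-8\A\tau .
\end{equation*}
Setting this to zero and using $\mu(r)=4r\widetilde\mu(r)$ gives $4t_l=4\tau_l\bigl(\sumj a_j^2\widetilde\mu(a_j|\tau|)|\xj|^2+2b_l\bigr)$, which is exactly the map $\Fx$. Injectivity of $\Fx$ follows from strict concavity of $\phi(g;\cdot)$ in $\tau$, since $\Fx(\theta)=\tfrac14\nabla_\theta(\psi)$ with $\psi:=-\phi+4t\cdot\theta$ strictly convex. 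Indeed, $\mu$ is strictly increasing and positive on $(0,\pi)$, and $x_{(\ell)}\neq0$ gives strict convexity in every direction, radially and tangentially, because $\widetilde\mu>0$. A strictly convex smooth function's gradient is an injective local diffeomorphism (its Jacobian is the positive definite Hessian). For surjectivity I will show the gradient is proper: as $|\theta|\to\pi/a_\ell$, $\widetilde\mu(a_\ell|\theta|)\to\infty$ and $|x_{(\ell)}|\ne0$ force $|\Fx(\theta)|\to\infty$. A proper local diffeomorphism $\Omega_*\to\R^m$ is then onto.

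Finally, for \eqref{equ-dBB}, the maximiser of $\phi(g;\cdot)$ for $g=(x,\Fx(\theta))$ is $\theta$ itself, so $\DB^2=\phi(g;\theta)$. Substituting $t=\Fx(\theta)$ gives
\begin{equation*}
\phi=\sumj|\xj|^2\bigl(r_j\cot r_j+r_j\mu(r_j)\bigr)+8\theta^T\A\theta-4\theta^T\A\theta
\end{equation*}
with $r_j=a_j|\theta|$. Then the identity $r\cot r+r\mu(r)=r\cot r+\frac{r(2r-\sin2r)}{2\sin^2r}=\frac{r^2}{\sin^2r}$ yields the formula. The main obstacle is the rigorous justification in part (1), namely the contour shift and the Laplace asymptotics with a possibly degenerate $\A$. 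I expect to bypass this by citing \cite{Li21} and checking only that $\Phi$ satisfies its structural hypotheses.
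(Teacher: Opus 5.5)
Your proposal is correct and follows the paper's route. The paper gives no argument beyond quoting part (1) from \cite{Li21} and treating part (2) as a direct consequence. Your gradient computation, the identification $\Fx=\frac14\nabla\psi$ with $\psi$ strictly convex (giving injectivity), properness via $\widetilde{\mu}(a_\ell|\theta|)\to\infty$ (giving surjectivity), and the identity $r\cot r+r\mu(r)=(r/\sin r)^2$ are exactly that deduction.

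One small slip concerns points with $x_{(\ell)}=0$. Lower semicontinuity of $g\mapsto\sup_{\Omega_*}\phi(g;\cdot)$ only gives $\sup_{\Omega_*}\phi(g;\cdot)\le \DB(g)^2$. For the reverse inequality you need two further facts: $(a_\ell|\tau|)\cot(a_\ell|\tau|)\,|x_{(\ell)}|^2\le |x_{(\ell)}|^2$, and the remaining terms of $\phi$ converge uniformly in $\tau\in\Omega_*$ as $g_k\to g$, because $a_j\pi/a_\ell<\pi$ for $j<\ell$. Together these give $\sup_{\Omega_*}\phi(g_k;\cdot)\le\sup_{\Omega_*}\phi(g;\cdot)+o(1)$, and continuity of $\DB$ then yields $\DB(g)^2\le\sup_{\Omega_*}\phi(g;\cdot)$. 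Alternatively, just cite \cite{Li21} for all $g$.
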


In what follows, we may assume that $x_{(\ell)} \neq 0$. 

\subsection{Statement of the results}\label{statement}

In the rest of the paper, the symbols $C, c$ will be used to denote various positive constants independent of $\mathbb{G}(2n,m,\mathbb{U},\mathbb{W})$ and $\mathbb{B}$,  which may vary from one line to the next. For non-negative functions $f_1$ and $f_2$, we write $f_1 \sim f_2$ if there exists  $C>0$ such that $ C^{-1} f_2 \leq f_1 \leq C f_2$. Similarly, $f_1 \lesssim f_2$ (resp. $f_1 \gtrsim f_2$) if $f_1 \le C f_2$ (resp. $f_1 \ge C f_2$). And $f_1 \ll f_2$ (resp. $f_1 \gg f_2$) if $f_1 \lesssim f_2$ (resp. $f_1 \gtrsim f_2$) with the constant $C$ small (resp. large) enough. Furthermore, for a complex-valued function $w$, the notation $w = O(f_1)$ means $|w| \le C f_1$. 

We introduce the following constants which will be used throughout this paper.
\begin{equation}\label{equ-KCH}
K := \sumj a_j^2 \, k_j =\frac{1}{2} \, \mbox{Tr}(\mathbb{W}^2)  \quad\mbox{and}\quad  \CCC :=  \frac{K}{n} \, (< a_\ell^2), 
\end{equation}
where $\mbox{Tr}(\mathbb{W}^2) $ is the trace of $\mathbb{W}^2$. Let $B_{\mathbb{B}}(g,r)$ be the ball defined by $d_\mathbb{B}$ centered at $g$ with radius $r> 0$. By left invariance, the volume $|B_{\mathbb{B}}(g,r)|= |B_{\mathbb{B}}(o,r)|$ for all $g\in\HH$. We give its uniform volume estimate.

\begin{theorem}\label{thm-voll}
We have uniformly in $r> 0$ that
\begin{align}\label{equ-con}
	\begin{split}
		|B_{\mathbb{B}} (o,r)| \sim |B_{\R^{2n+m}}(0,r)| \ \det \left( \frac{r^2 \, \CCC}{12}  \, \mathbb{I}_m + \mathbb{B}^T \mathbb{B} \right)^{\frac{1}{2}}.
	\end{split}
\end{align}
Consequently, 
\begin{equation}\label{equ-uniformg}
	\sup_{g\in \HH, \, r>0,\, \mathbb{B}} \frac{|B_{\mathbb{B}}(g,2r)|}{|B_{\mathbb{B}}(g,r)|} \lesssim 2^{2(n+m)}.
\end{equation}
\end{theorem}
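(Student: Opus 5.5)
By left invariance it suffices to treat $g=o$. I would first reduce to $r=1$ by dilation. With $\delta_r(x,t)=(rx,r^2t)$, the definition of $\phi$ in \eqref{equ-dB} gives $\DB(\delta_r g)=r\,d_{\mathbb{B}/r}(g)$, where I substitute $\lambda=\tau/r$ in the supremum and the domain scales compatibly. Hence
\[
|B_{\mathbb{B}}(o,r)|=r^{2n+2m}\,|B_{\mathbb{B}/r}(o,1)|.
\]
The right-hand side of \eqref{equ-con} scales identically, because
\[
\det\!\Big(\tfrac{r^2\CCC}{12}\mathbb{I}_m+\A\Big)^{1/2}=r^m\det\!\Big(\tfrac{\CCC}{12}\mathbb{I}_m+\A/r^2\Big)^{1/2}.
\]
So I only need to prove $|B_{\mathbb{B}}(o,1)|\sim|B_{\R^{2n+m}}(0,1)|\prod_{l}(\CCC/12+b_l)^{1/2}$ uniformly in $\mathbb{B}$, $\HH$. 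The doubling bound \eqref{equ-uniformg} then follows directly from \eqref{equ-con}. The Euclidean factor gives $2^{2n+m}$, and each of the $m$ eigenvalues of the determinant grows by at most a factor $4$, giving $2^m$.

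For $r=1$ I would use Theorem \ref{thm-Fx}(2) to change variables $t=\Fx(\theta)$ for a.e. $x$ (those with $x_{(\ell)}\ne0$):
\[
|B_{\mathbb{B}}(o,1)|=\int_{\R^{2n}}\int_{\Omega_*}\mathbbm{1}_{\{\Psi(x,\theta)<1\}}\,|\det\nabla_\theta\Fx(\theta)|\,d\theta\,dx,
\]
where $\Psi$ is the right side of \eqref{equ-dBB}. Write $S(s)=\sumj a_j^2\widetilde\mu(a_js)|\xj|^2$. Then $\nabla_\theta\Fx=\mathrm{diag}(S+2b_l)+S'(|\theta|)\,\theta\theta^T/|\theta|$, a rank-one perturbation, so
\[
\det\nabla_\theta\Fx=\prod_{l}(S+2b_l)\Big(1+\tfrac{S'}{|\theta|}\sum_l\tfrac{\theta_l^2}{S+2b_l}\Big).
\]
Since $\widetilde\mu$ is even and increasing on $[0,\pi)$ with $\widetilde\mu(0)=1/6$, the bracket is at least $1$, and it is at most $1+|\theta|S'/S$.

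The constraint $\Psi<1$ forces $|x|<1$ and $4\sum b_k\theta_k^2<1-|x|^2$. Moreover $(a_j|\theta|/\sin(a_j|\theta|))^2\ge1+a_j^2|\theta|^2/3$, so it also forces $\sumj a_j^2|\xj|^2|\theta|^2\lesssim 1-|x|^2$. The mass of the $2n$-ball sits in the shell $1-|x|^2\lesssim 1/n$. There $|\theta|^2\lesssim 1/(n\sum a_j^2|\xj|^2)$, which keeps $a_\ell|\theta|$ small enough that the $\sin$ factors and $\widetilde\mu$ are within $1+O(a_j^2|\theta|^2)$ of their values at $0$. In that regime $S\approx\frac16\sumj a_j^2|\xj|^2$, and by concentration on the sphere this is $\approx K/(6n)\cdot|x|^2=\CCC|x|^2/6$. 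The $\theta$-integral is then essentially over an ellipsoid with semi-axes $\rho_l\approx\frac12\sqrt{(1-|x|^2)/(b_l+\CCC/12)}$; here the quadratic term $\frac{\CCC}{3}|\theta|^2\cdot\frac14$ from the $\sin$ expansion combines with $b_l$. Multiplying by $\prod(S+2b_l)\approx\prod 2(\CCC/12+b_l)$ yields, per coordinate, $(\CCC/12+b_l)^{1/2}$ times $(1-|x|^2)^{1/2}$. Integrating $(1-|x|^2)^{m/2}$ over the $2n$-ball reproduces $|B_{\R^{2n+m}}(0,1)|$ up to the $m$-dimensional ball volume factor. For the lower bound I would restrict to this good region; for the upper bound I would use the cruder inequalities above together with $\widetilde\mu$ monotonicity, and handle $|\theta|$ near $\pi/a_\ell$ separately. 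That region should be negligible because $\Psi$ blows up there.

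The main obstacle is uniformity: the estimate involves an $m$-fold product, so per-factor constant errors would give $C^m$, and by \eqref{equ-m} this is only $\mathrm{poly}(n)$, not $O(1)$. I would therefore aim to show that each factor is correct up to $1+O(1/n)$ (or $1+O(m/n)$ overall). This requires precise relative control, on the shell $1-|x|^2\sim 1/n$, of three quantities: the Taylor errors of $\sin$ and $\widetilde\mu$, the rank-one bracket (which is $1+O(|\theta|^2a_\ell^2)$), and the fluctuation of $\sumj a_j^2|\xj|^2$ about $\CCC|x|^2$. The last one can be bad when one $a_\ell$ dominates and $k_\ell$ is small, so I would compute the $x$-integral exactly in polar coordinates in each block $\xj$ rather than rely on concentration. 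I expect this exact-constant bookkeeping to be the hardest part.
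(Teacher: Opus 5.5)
\textbf{There is a genuine gap: the uniformity in $(n,m,\mathbb{W},\mathbb{B})$, which is the whole content of the theorem, is left as ``bookkeeping'', and two of your concrete choices would fail.}

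What you have right: your skeleton is the paper's computation of $\mathbf{O}_1$. That skeleton is the scaling to $r=1$, the change of variables $t=\Fx(\theta)$, the rank-one formula for $\det\nabla_\theta\Fx$, and replacing the $\theta$-section by the ellipsoid with semi-axes $\frac12\big((1-|x|^2)/(b_l+|\mathbb{W}x|^2/12)\big)^{1/2}$. Your derivation of \eqref{equ-uniformg} from \eqref{equ-con} is also fine. With dimension-dependent constants the theorem would be routine, so what remains is exactly the part you postpone.

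First failure: the location of the main mass. The $\theta$-section carries the weight $(1-|x|^2)^{m/2}$, so the relevant mass sits at $1-|x|^2\sim m/n$, not $1/n$ (Lemma \ref{lem-a}(ii)). Restricting the lower bound to $1-|x|^2\le c/n$ captures only a fraction $\approx c^{m/2+1}/\Gamma(\frac m2+2)$ of $\int_{B_1}(1-|x|^2)^{m/2}\,dx$. This is not bounded below, because $m$ can be of order $\log n$.

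Second failure: per-factor accuracy $1+O(1/n)$ is not available.
\begin{itemize}
\item On the true shell $|\theta|^2$ ranges up to order $m/K$, and $K$ may be comparable to $k_\ell\ll n$.
\item The relative fluctuation of $|\mathbb{W}x|^2$ about $\CCC$ can be of order $k_\ell^{-1/2}$.
\item An exact block-by-block polar computation of $\int\prod_l(|\mathbb{W}x|^2+\beta_l)^{1/2}dx$ is not available either.
\end{itemize}

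How the paper avoids pointwise control:
\begin{itemize}
\item It works on $\{1-|x|^2<\varrho|\mathbb{W}x|^2\}$ with $\varrho=\min\{\varepsilon,k_\ell^{-3/4}\}$. There $|\theta|^2<\varrho$, so all Taylor and Jacobian errors multiply to $(1+O(\varrho))^m=O(1)$, since $m\lesssim\log k_\ell$. This region still captures the $m/n$ shell in most directions (Lemma \ref{lem-I0j}).
\item The $x$-integral is controlled by the key Lemma \ref{lem-WW}: $\fint_{B_1}|\mathbb{W}x|^\nu\,dx\sim\CCC^{\nu/2}$ with absolute constants for $1\le\nu\le 10m$.
\item The upper bound in that lemma comes from convexity of $w\mapsto\fint_{B_1}(\sum_j w_j|x_{(j)}|^2)^\nu dx$. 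Its maximum on the simplex $\{\sum_j 2k_jw_j=1\}$ is attained at a vertex, which reduces everything to single-block Gamma-function moments. These are valid because $\nu\lesssim\sqrt{k_j}$ by \eqref{equ-m}.
\item Expanding $\prod_l(u^2+\beta_l)=\sum_l d_l u^{2l}$ with $d_l\ge 0$, and applying H\"older and reverse Minkowski, transfers the moment bounds to the $m$-fold product (Corollary \ref{corb} and \eqref{equ-IB}).
\end{itemize}

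Your plan for the complementary region is also insufficient. There the factors $S(|\theta|)+2b_l$ exceed their values at $\theta=0$ by an amount that grows with $|\theta|$, so crude bounds lose $C^m$. The observation that $\Psi$ blows up near $|\theta|=\pi/a_\ell$ neither handles the intermediate range nor beats $C^m$. The paper proceeds as follows:
\begin{itemize}
\item It splits $\{\delta>\sqrt\varrho\}$ into the pieces $\delta_k<\delta<\delta_{k+1}$ of the level sets $S_{\delta,1}$, with $\delta_0=\sqrt\varrho$ and $\delta_k=\pi-2^{-k}$.
\item It bounds $|\mathcal{B}_{x,R}|\le c^m(R^2-|x|^2)^{m/2}\mathcal{K}(x;\widetilde\theta_0(x))$ using the monotonicity of $\widetilde\mu$.
\item It applies H\"older with an exponent $\gamma$ slightly above $1$ and rescales $y_{(j)}=\frac{a_j\delta_k}{\sin(a_j\delta_k)}x_{(j)}$.
\item This produces the gain $(\sin\delta_k/\delta_k)^{2k_\ell-\gamma m-\frac12}\le e^{-c(2k_\ell-\gamma m-\frac12)\varrho}$. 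It absorbs $C^m$ because $k_\ell\varrho=k_\ell^{1/4}\gg\log k_\ell\gtrsim m$ for large $k_\ell$, and $m\lesssim 1$ otherwise.
\item The leftover factor $(\sin\delta_k/\delta_k)^{1/2}\sim 2^{-k/2}$ makes the sum over $k$ converge.
\end{itemize}

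A minor point: in your scaling step no substitution $\lambda=\tau/r$ is needed, and it would not preserve $\Omega_*$. Directly, $\phi((rx,r^2t);\lambda)$ equals $r^2$ times the function $\phi$ built with $\mathbb{A}/r^2$ in place of $\mathbb{A}$.
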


\begin{remark}
Note that $2(n+m)$ is exactly the dimension at infinity of $\mathbb{G}$. 
\end{remark}

As a direct consequence, we obtain:

\begin{corollary}\label{cor-H}
    In the setting of Heisenberg groups $\mathbb{H}(2n, 1)$, the uniformly volume doubling property is valid for the following natural Laplacians:
    \[
    \sum_{j = 1}^{2 n} \wX_j^2 + c^2 \frac{\partial^2}{\partial t^2}, \quad \mbox{with $c > 0$ and $\{ \wX_j \}_{j = 1}^{2 n}$ a basis of $\mathrm{span} \{ \mathrm{ X }_1, \ldots, \mathrm{ X }_{2 n}\}$.} 
    \]
\end{corollary}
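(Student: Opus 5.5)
The corollary is a reduction to Theorem \ref{thm-voll}, specifically to the uniform doubling bound \eqref{equ-uniformg}. The plan has three steps: turn an arbitrary basis $\{\wX_j\}$ into the canonical one by a group automorphism, identify the resulting operator as some $\Delta_{\mathbb{B}}$ on a generalized Heisenberg group, and check that volume ratios survive the automorphism.

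\textbf{Step 1: the automorphism.} Write $\wX_j=\sum_q M_{jq}\mathrm{X}_q$ with $M\in GL(2n,\R)$. Then $\sum_j \wX_j^2=\sum_{q,q'} (M^TM)_{qq'}\mathrm{X}_q\mathrm{X}_{q'}$. Set $P=(M^TM)^{1/2}$, which is positive definite. Consider the linear map
\[
\Psi(x,t)=(P^{-1}x,\,t)
\]
on $\R^{2n}\times\R$. A direct check of the group law shows that $\Psi$ is a group isomorphism from $\mathbb{H}(2n,1)$, with skew matrix $U$, onto the group $\R^{2n}\times\R$ with skew matrix $U'=P\,U\,P$. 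Indeed,
\[
\langle UP^{-1}\cdot,P^{-1}\cdot\rangle\circ\Psi=\langle U\cdot,\cdot\rangle ,
\]
so the correct choice is $U'=PUP$ viewed in the new coordinates. In these coordinates the operator becomes $\sum_q \mathrm{X}'^2_q+c^2\partial_t^2$, where the $\mathrm{X}'_q$ are the canonical fields of the new group.

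\textbf{Step 2: identification as a generalized H-type group.} Since $m=1$, the only condition needed is that $U(\lambda)^2$ be a positive multiple of a positive definite square. Here $U'$ is a nondegenerate skew matrix, so $-U'^2=U'^TU'$ is positive definite. Take $\mathbb{W}=(-U'^2)^{1/2}$. For $m=1$ the defining relation reduces to $-U'^2=\mathbb{W}^2$, which holds. Hence the new group is a generalized Heisenberg group $\mathbb{G}(2n,1,\{U'\},\mathbb{W})$, the operator is $\Delta_{\mathbb{B}}$ with $\mathbb{B}=(c)$, and the Carnot--Carath\'eodory distance is $d_{\mathbb{B}}$. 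This fits the framework up to the normalizations of Section \ref{ssec-geneH}, including the orthogonal reduction \eqref{equ-W}. Since \eqref{equ-uniformg} was stated for all $\ell$, including the case $\ell=1$ cited from \cite{BLZ25}, no case distinction is needed.

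\textbf{Step 3: transfer of volumes.} This step is the main point to verify, though it is not hard. The Carnot--Carath\'eodory distance is intrinsic: $\Psi$ maps the horizontal fields $\wX_j$ (and $c\partial_t$) to an orthonormal frame generating the new operator, so $\Psi$ is an isometry between the two distances. Because $\Psi$ is linear, it scales Lebesgue measure by the constant $\det P^{-1}$, which cancels in ratios. Consequently
\[
\frac{|B(g,2r)|}{|B(g,r)|}
\]
on $\mathbb{H}(2n,1)$ equals the corresponding ratio on $\mathbb{G}(2n,1,\{U'\},\mathbb{W})$, and \eqref{equ-uniformg} bounds it by $C\,2^{2(n+1)}$, uniformly in $M$, $c$, $g$ and $r$.

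The only care required is the bookkeeping that the frame is actually orthonormal: the $\wX_j$ arise from $\sum\wX_j^2$ via $M^TM=P^2$. One can replace $M$ by $P$, since $\sum_j \wX_j^2$ depends only on $M^TM$, and the sub-Riemannian metric on the horizontal bundle, and hence the distance, depends only on the operator. The constant is independent of the metric, which gives the uniform doubling property claimed in Corollary \ref{cor-H}.
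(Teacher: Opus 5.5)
Your proposal is correct and is the reduction the paper has in mind when it calls the corollary a direct consequence of \eqref{equ-uniformg}. The linear change of variables $x\mapsto P^{-1}x$ turns the operator into $\Delta_{\mathbb{B}}$ with $\mathbb{B}=(c)$ on the generalized Heisenberg group $\mathbb{G}(2n,1,\{PUP\},\mathbb{W})$, and its constant Jacobian cancels in the doubling ratio. (The displayed identity in Step 1 is garbled; what you need is $\langle PUP\,P^{-1}x,P^{-1}x'\rangle=\langle Ux,x'\rangle$, but your conclusion $U'=PUP$ is right.)
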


\begin{remark}
We will show in a future work that all step-two Carnot groups of corank $1$ admit the uniformly doubling property. In particular, the extra condition ``natural Laplacians'' in Corollary \ref{cor-H} can be removed. 
\end{remark}

Next, we consider the weak $(1, 1)$ inequality for the centered Hardy-Littlewood maximal function $M_\mathbb{B}$ associated to $d_{\mathbb{B}}$, which is defined by
    \begin{align}\label{max}
    	M_{\mathbb{B}} f(z) = \sup_{r>0} \, \fint_{ B_{\mathbb{B}}(z,r) } 
    	|f(g)| \,  dg,\quad z \in \mathbb{G},
    \end{align}
    where the integral with a stick denotes the integral average of $|f|$ over $B_\mathbb{B}(z, r)$.

    The following technical condition will be used in the study of asymptotics of Poisson kernel on $\mathbb{G}$; see Proposition \ref{prop-poi} and Remark \ref{remark-pi} below.
\begin{assumption}\label{assumption}
There exists $\eta_0 \in (0,1] $ such that $k_{\ell}/ n \, \ge \,\eta_0 $. 
\end{assumption}

\begin{theorem}\label{thm-main1}
If Assumption \ref{assumption} holds, then there exists a constant $C>0$,  depending only on $\eta_0$ such that
\begin{equation*}
	\| M_{\mathbb{B}} \|_{L^1 \longrightarrow L^{1,\infty}} \le C  \left(\frac{3 \, a_\ell^2 }{2 \, \CCC } 
	\right)^{\frac{m}{2}} n.
\end{equation*}
\end{theorem}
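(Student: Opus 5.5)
We follow the Stein--Str\"omberg strategy as adapted to H-type groups in \cite{BLZ25}: we dominate $M_{\mathbb{B}}$ by a maximal function built from a diffusion semigroup, and control the error on a short range of scales by comparing balls. The semigroup is the Poisson semigroup $P_s=e^{-s\sqrt{-\Delta_{\mathbb{B}}}}$. By Stein's maximal theorem for symmetric diffusion semigroups (via Hopf--Dunford--Schwartz), $\sup_{s>0}|P_s f|$ is weak $(1,1)$ with a constant that is absolute and independent of dimension. Hence the task reduces to a pointwise comparison of ball averages with Poisson averages, in which the constant may grow like $(3a_\ell^2/(2\CCC))^{m/2}$ times a dimension factor.

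\textbf{Step 1: volume profile.} Using Theorem \ref{thm-voll}, we record
\[
\varphi(r):=|B_{\mathbb{B}}(o,r)|\sim r^{2n+m}|B_{\R^{2n+m}}(0,1)|\det\Bigl(\tfrac{r^2\CCC}{12}\mathbb{I}_m+\mathbb{A}\Bigr)^{1/2}.
\]
From this we read off the ratios $\varphi(r(1+1/n))/\varphi(r)\lesssim 1$. This bound is uniform because the logarithmic derivative of $\varphi$ is at most $(2n+2m)/r$, and $m\lesssim\log n$ by \eqref{equ-m}.

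\textbf{Step 2: Poisson kernel lower bound.} Using Proposition \ref{prop-poi}, which is the Poisson kernel asymptotics stated later in the paper, and the heat kernel integral representation, we aim for the following. For $s\sim r/\sqrt{n}$ (or a suitable scale) and all $g\in B_{\mathbb{B}}(o,r)$,
\[
P_s(g)\gtrsim \frac{1}{\varphi(r)}\Bigl(\frac{2\CCC}{3a_\ell^2}\Bigr)^{m/2}.
\]
To get this we write $p_h(g)$ via the $\lambda$-integral. We use $\phi(g;\lambda)\le \DB(g)^2$ from Theorem \ref{thm-Fx} together with Laplace's method near $\lambda=0$. There the Hessian of $\Phi$ in $\lambda$ involves $\frac13\sum_j a_j^2|\xj|^2$ plus $4\mathbb{A}$. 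Comparing $\sum_j a_j^2|\xj|^2\le a_\ell^2|x|^2$ with the typical value $\CCC|x|^2$ (since $|\xj|^2$ concentrates proportionally to $k_j$) produces exactly the loss $(3a_\ell^2/(2\CCC))^{m/2}$, through the Gaussian determinant in $m$ variables. Assumption \ref{assumption} is used here so that the directions in $\xl$ carry a fixed proportion of the mass. This controls $\widetilde{\mu}(a_\ell|\theta|)$ near the boundary of $\Omega_*$ and keeps the subordination integral well behaved.

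\textbf{Step 3: splitting scales.} We handle radii in three ranges. For large radii, the average over $B_{\mathbb{B}}(z,r)$ is bounded by $C(3a_\ell^2/(2\CCC))^{m/2}P_{s(r)}|f|(z)$, using Step 2 and the positivity of $P_s$. For small radii, where the group is nearly Euclidean of dimension $2n+m$ in rescaled form, we reduce to the same comparison. For the intermediate window we use the Stein--Str\"omberg device. We compare balls at dyadic-like scales $r_k=(1+1/n)^k$ over $O(n)$ steps, which contributes the factor $n$. We control each step with a weak $(1,1)$ estimate for the single averaging operator, which has norm $1$, together with the doubling from Step 1. Summing gives $C(3a_\ell^2/(2\CCC))^{m/2}n$.

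\textbf{Main obstacle.} We expect Step 2 to be the hardest: a uniform lower bound of the Poisson kernel on the whole ball with the sharp $m$-dependent constant. The difficulty lies where $|x|$ is small relative to $r$ and $|t|$ dominates. There the saddle point $\theta$ of $\Fx$ approaches $\partial\Omega_*$, and the $\coth$ and $\sinh$ factors in $\mathbf V$ degenerate. We would first try to bound $\DB$ from above by the quadratic form \eqref{equ-dBB} evaluated at a smaller $\theta$, so that the relevant region stays inside $|\theta|\le c/a_\ell$. We would then absorb the remaining points into the small-radius regime.
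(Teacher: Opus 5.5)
There is a genuine gap. It lies in Step 2 and in how Step 3 tries to produce the factor $n$.

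The bound you aim for in Step 2 is $P_s(g)\gtrsim\varphi(r)^{-1}(2\CCC/(3a_\ell^2))^{m/2}$ at a single time $s$, for all $g\in B_{\mathbb{B}}(o,r)$. This is false by a factor of $n$ already in the Euclidean model. In $\R^d$, $P_s(x)=\Gamma(\frac{d+1}{2})\pi^{-\frac{d+1}{2}}\,s\,(s^2+|x|^2)^{-\frac{d+1}{2}}$. For $|x|=r$, the product $|B_{\R^d}(0,r)|\,P_s(x)$ is maximized at $s=r/\sqrt d$, where it is only $\sim 1/d$.

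So the loss of $n$ is paid at every radius when the normalized ball indicator is compared with the Poisson kernel. It does not come from an intermediate window of radii. Comparing radii $(1+1/n)^k$ over $O(n)$ steps only covers a range of bounded ratio, and the paper needs nothing of the sort. The correct reduction, which is what the paper proves, is the pointwise domination \eqref{equ-HDS}, valid for every $r>0$ (with $a_\ell=1$):
\[
|B_{\mathbb{B}}(o,r)|^{-1}\chi_{B_{\mathbb{B}}(o,r)}(g)\lesssim n\Bigl(\frac{3}{2\CCC}\Bigr)^{\frac m2}\fint_0^{r/\sqrt n}P_h(g)\,dh .
\]
The right-hand side is an ergodic average of the Poisson semigroup, so Hopf--Dunford--Schwartz gives weak $(1,1)$ with an absolute constant directly. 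Vitali plus Theorem \ref{thm-voll} is used only when $n\lesssim 1$.

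The time average is also what makes the kernel asymptotics usable. Proposition \ref{prop-poi} holds only for $h\le n^{-1/2}\DB(g)$. When $\DB(g)\ll r$, one drops the part $h\in(\DB(g)/\sqrt n,\,r/\sqrt n)$ of the average, since $P_h\ge 0$. A single time $s\sim r/\sqrt n$ would instead need information on $P_s(g)$ outside that range.

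The second gap is the uniform lower bound for $P_h$ itself, which your ``Main obstacle'' paragraph flags but does not resolve. Laplace's method at $\lambda=0$ only covers the regime where the saddle point is close to the origin. Absorbing the region of small $|x|$ and large $|t|$ into a small-radius regime cannot work. For $\mathbb{B}=0$ the distance is homogeneous under $(x,t)\mapsto(\delta x,\delta^2 t)$, so that region occurs at every scale. Moreover, there is no Euclidean small-scale regime when $\mathbb{B}$ is degenerate.

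The paper handles all $g$ at once:
\begin{itemize}
\item It shifts the contour in \eqref{equ-poisson} to $\lambda+\mathrm{i}\tau_h$, where $\tau_h$ is the unique maximizer of $\sS$ (Proposition \ref{prop-max}, via the diffeomorphism $\MC$).
\item It bounds the Hessian there (Proposition \ref{prop-Hess}).
\item Assumption \ref{assumption} enters through Lemma \ref{lem-he}(3): $|\tau_h|<r_*$ with $\pi-r_*\sim 1$. It is this Poisson saddle point that must stay away from $|\lambda|=\pi$, not the parameter $\theta$ of $\Fx$. Remark \ref{remark-pi} shows this fails without the assumption.
\end{itemize}

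The constant $(3a_\ell^2/(2\CCC))^{m/2}$ then comes from comparing two sets of quantities. On one side are the factor $(8\pi\sS(\tau_h)/N)^{m/2}$ in \eqref{equ-Ph} and the bound $\det(-\mathrm{Hess}\,\sS(\tau_h))\lesssim\prod_l(\DB(g)^2+8b_l)$. On the other is $\det(\frac{r^2\CCC}{12}\mathbb{I}_m+\mathbb{A})^{1/2}$ from Theorem \ref{thm-voll}. Lemma \ref{lem-he}(2) is used to replace $\sS(\tau_h)$ by $\DB(g)^2+h^2$. Your determinant heuristic points in this direction, but it does not yet amount to these estimates.
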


In the special case of H-type groups, where $\CCC= a_\ell =1$ and $\eta_0 = 1$,
this result extends \cite[Theorem 5.1]{BLZ25}. 

It follows from \eqref{equ-dB} that $d_{\mathbb{W}, \mathbb{B}}(x, t)^2 = d_{\mathbb{W}/a_{\ell}, \mathbb{B}/a_{\ell}}(x, t/a_{\ell})^2$. In what follows, up to a scaling w.r.t. $t$, we may assume $a_{\ell} = 1$. \\ 

\section{Proof of Theorem \ref{thm-voll}}\label{sec-vol}

The goal of this section is to establish volume estimates \eqref{equ-con}, whose strategy follows the spirit of \cite[Lemma~5.3]{BLZ25}. In our setting, however, some significant modifications are necessary due to the lack of symmetry w.r.t. $x$. Specifically, in computing the main contribution, we are faced with a more complicated integral estimate after making use of the diffeomorphism defined by \eqref{equ-Atheta}. A new inequality (Lemma \ref{lem-WW} below) will be applied repeatedly. For the remainder, we use the method of decomposition in annuli to deal with the singularity of the integrand. Let us begin with:

\subsection{Auxiliary estimates}
We present several elementary inequalities based on the Stirling's formula (see \eqref{St} below). 

Let $B_R := B_{\R^{2n}}(0,R)$ for $R>0$, and $\mathbb{S}^{k-1}$ denote the unit sphere in $\R^k$. Recall that its area is $|\mathbb{S}^{k-1}| = k \, |B_{\R^k} (0,1)|= 2\pi^{\frac{k}{2}} \, {\Gamma\left( \frac{k}{2} \right)}^{-1}$, where $\Gamma$ denotes the usual gamma function.
\begin{lemma}\label{lem-a}
(i) We have, uniformly in $1\le j\le \ell$ and $0\le u \le 100 \min_{1\le j\le \ell} k_j^{1/2}$, that
     \[
    \fint_{B_1} \, |\xj|^{2u} \, dx \sim  \left(\frac{k_j}{n}\right)^u.
     \]

(ii)
 Let $(n, m)$ be as in \eqref{equ-W}-\eqref{equ-m}, then 
 it holds, uniformly in $0\le v \le 10 m$, that
        \begin{equation}\label{beta}
            \int_{ ( 1+ \frac{2m}{n} )^{-\frac{1}{2}} }^1 \, (1-r^2)^{\frac{m}{2}} r^{2n+v-1} \, dr \sim \int_{ 0 }^1 \, (1-r^2)^{\frac{m}{2}} \, r^{2n+v-1} \, dr \sim \frac{\Gamma(\frac{m}{2}+1)}{n^{\frac{m}{2}+1}}.
        \end{equation}        
\end{lemma}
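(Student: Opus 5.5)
For (i), write $x=(\xj,y)$ with $\xj\in\R^{2k_j}$ and $y\in\R^{2n-2k_j}$, and pass to polar coordinates in the two blocks separately. With $p:=2k_j$ and $q:=2n-2k_j$ this gives
\[
\int_{B_1}|\xj|^{2u}\,dx=|\mathbb{S}^{p-1}|\,|\mathbb{S}^{q-1}|\int_{\rho^2+s^2<1}\rho^{2u+p-1}s^{q-1}\,d\rho\,ds .
\]
The substitution $\rho=r\cos\varphi$, $s=r\sin\varphi$ separates this into a radial Beta integral, which equals $(2n+2u)^{-1}$, and an angular Beta integral $\frac12 B\bigl(\frac{p}{2}+u,\frac{q}{2}\bigr)$. (When $k_j=n$ there is no $y$-block; the formula is then immediate.) Dividing by $|B_1|=|\mathbb{S}^{2n-1}|/(2n)$ and using $|\mathbb{S}^{k-1}|=2\pi^{k/2}/\Gamma(k/2)$, every power of $\pi$ cancels and we obtain the exact identity
\[
\fint_{B_1}|\xj|^{2u}dx=\frac{n}{n+u}\cdot\frac{\Gamma(k_j+u)\,\Gamma(n)}{\Gamma(k_j)\,\Gamma(n+u)}.
\]
The prefactor $\frac{n}{n+u}$ is $\sim1$, because $u\le 100\sqrt{k_j}\le 100\sqrt n$. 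By Stirling's formula, $\Gamma(N+u)/\Gamma(N)=N^u\exp\bigl(O(u^2/N)+O(u/N)\bigr)$ for $u\ge0$. Applied with $N=k_j$ and with $N=n$, the quantities $u^2/k_j\le 10^4$ and $u^2/n$ stay bounded, so the ratio is $\sim (k_j/n)^u$. The one point that needs care is this uniform Stirling estimate, including the case where $k_j$ is small. There $u\lesssim1$, and the bound follows directly from the standard estimate $\log\Gamma(N+u)-\log\Gamma(N)=u\log N+O(u^2/N)$, which holds uniformly for $N\ge1$ and $0\le u\le 100\sqrt N$; this can be obtained from $\psi(N+s)=\log N+O(s/N)+O(1/N)$ by integrating in $s$.

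For (ii), substitute $s=r^2$ to get
\[
\int_0^1(1-r^2)^{m/2}r^{2n+v-1}dr=\tfrac12 B\Bigl(\tfrac m2+1,\,n+\tfrac v2\Bigr)=\frac{\Gamma(\frac m2+1)\,\Gamma(n+\frac v2)}{2\,\Gamma(n+\frac v2+\frac m2+1)}.
\]
By \eqref{equ-m}, $m\lesssim\log n$ and $v\le 10m$, so $(m+v)^2/n\lesssim1$. Stirling as above therefore gives $\Gamma(n+\frac v2)/\Gamma(n+\frac v2+\frac m2+1)\sim n^{-m/2-1}$, which is the second equivalence in \eqref{beta}.

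For the first equivalence, the truncated integral is obviously at most the full one, so only a lower bound is needed. The integrand in $s=r^2$ is $(1-s)^{m/2}s^{n+v/2-1}$, which is a Beta density with mean $\approx 1-\frac{m+2}{2n}$ and standard deviation $\approx \frac{\sqrt{m+2}}{n}$. The truncation keeps the region $s\ge (1+\frac{2m}{n})^{-1}\approx 1-\frac{2m}{n}$. Since the cut lies at least a constant multiple of $\frac{m}{n}$ below the mean, the retained region carries a fixed proportion of the mass. To make this concrete, bound the integral over $[1-\frac{c_0}{n},\,1-\frac{c_1}{n}]$ from below when $m$ is bounded, and over $[1-\frac{2m}{n},\,1-\frac{m}{2n}]$ in general. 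On the latter interval $s^{n+v/2}\ge e^{-C m}$, which only loses a factor $C^m$, so I would sharpen this bound. Instead, I would apply Markov's or Chebyshev's inequality to $1-s$ under the Beta law. Its mean is $\frac{m/2+1}{n+v/2+m/2+1}<\frac{2m}{n}\cdot\theta$ for some $\theta<1$ once $m\ge1$, with the $m=1$ case checked directly. Markov's inequality then shows that the excluded set $\{1-s>\sim \frac{2m}{n}\}$ has probability at most $\theta<1$. The main obstacle is pinning down the constants in this last comparison: the cutoff $(1+\frac{2m}{n})^{-1}=1-\frac{2m}{n}+O(\frac{m^2}{n^2})$ must be compared with the mean $\frac{m+2}{2n}(1+o(1))$, which is smaller by a factor of about $4$ when $m\ge1$. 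That margin is enough for Markov's inequality to give mass at least $\frac12$.
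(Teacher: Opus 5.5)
Your proposal is correct. For (i) and for the second equivalence in (ii) it is the paper's argument. The exact identity is the same, since your $\frac{n}{n+u}\frac{\Gamma(k_j+u)\Gamma(n)}{\Gamma(k_j)\Gamma(n+u)}$ equals the paper's $\frac{\Gamma(u+k_j)}{\Gamma(k_j)}\frac{\Gamma(n+1)}{\Gamma(u+n+1)}$. The Beta-function formula is also the same, followed by Stirling. Integrating the digamma function instead of quoting $\Gamma(r+1)\sim(r/e)^{r+1/2}$ is only a cosmetic difference.

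The genuine difference is the lower bound in the first equivalence of \eqref{beta}. The paper first disposes of $n\lesssim 1$ separately. It then substitutes $s=1-r^2$ and replaces $(1-s)^{n+v/2-1}$ by $e^{-(n+v/2-1)s}$ on $[0,\frac{2m}{n+2m}]$, which uses $m^2/n\lesssim 1$ from \eqref{equ-m}. After rescaling to an incomplete Gamma integral, it proves $\int_0^m s^{m/2}e^{-s}\,ds\sim\Gamma(\frac m2+1)$ by splitting $e^{-s}=e^{-s/2}e^{-s/2}$.

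You instead treat the integrand as a Beta law and apply Markov's inequality to $1-s$. This is shorter and non-asymptotic. It needs no linearization of the exponent, no small-$n$ case, and, for this step, nothing beyond $m\le 2n$.

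Your stated constants are off, though. The ratio of cutoff to mean is about $\frac{4m}{m+2}$, which is only $\frac43$ at $m=1$, so ``mass at least $\frac12$'' is false in general. This is harmless, since directly, using $v\ge 0$, $m\ge 1$ and $m\le 2n$,
\[
\frac{\mathbb{E}[1-s]}{2m/(n+2m)}\le\frac{(m+2)(n+2m)}{4m\left(n+\frac m2+1\right)}=1-\frac{n(3m-2)}{4m\left(n+\frac m2+1\right)}\le 1-\frac{n}{4n+2m+4}\le\frac{11}{12}.
\]
So the truncated integral keeps at least $\frac1{12}$ of the full one uniformly, and no separate $m=1$ check is needed.

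The trade-off is that Markov works only because the cutoff $\frac{2m}{n}$ sits a fixed factor above the mean $\approx\frac{m+2}{2n}$. In Case II of Lemma \ref{lem-I0j} the paper reuses \eqref{beta} with the cutoff replaced by $\varepsilon^2 n^{-1}$. That cutoff lies below the mean, so Markov gives nothing there, whereas the paper's direct computation (with $m\lesssim 1$) still yields an $\varepsilon$-dependent lower bound.
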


\begin{proof}
    For (i), using polar coordinates in $\R^{2n} = \mathbb{R}^{2k_j} \times \mathbb{R}^{2(n-k_j)}$, we get
    \begin{equation*}
        \fint_{B_1} \, |\xj|^{2u} \, dx = \frac{|\mathbb{S}^{2k_j-1}| \cdot |\mathbb{S}^{2(n-k_j)-1}|}{|B_1|} \int_{\mathop{r^2 + s^2 <1}\limits_{r>0,s>0}} r^{2(u+k_j)-1} s^{2(n-k_j)-1} \,d r \, d s.
    \end{equation*}
    Notice that for $\alpha>-1$ and $\beta>-1$ (cf. e.g. \cite[\S~4.635]{GR15}),
    \begin{equation}\label{Gamma}
        \int_{\mathop{r^2 + s^2 <1}\limits_{r>0,s>0}} r^\alpha s^\beta \,drds = \frac{1}{\beta+1} \int_0^1 r^\alpha \, (1-r^2)^{\frac{\beta+1}{2}} \, dr =  \frac{1}{4} \frac{\Gamma(\frac{\alpha+1}{2}) \, \Gamma(\frac{\beta+1}{2})}{\Gamma(\frac{\alpha+\beta}{2}+2)}.
    \end{equation}

Then we have
    \begin{equation*}
         \fint_{B_1} \, |\xj|^{2u} \, dx = 
         \frac{\Gamma(u+k_j)}{\Gamma(k_j)} \frac{\Gamma(n+1)}{\Gamma(u+n+1)}.
    \end{equation*}
    Now we use the Stirling's formula (see e.g. \cite[\S~8.327.1]{GR15})
    \begin{equation}\label{St}
        \Gamma(r+1) \sim \left( \frac{r}{e}\right)^{r+\frac{1}{2}}, \qquad \forall \, r\ge 1, 
    \end{equation}
    as well as $\Gamma(r+1) = r\,\Gamma(r)$ whenever $r>0$ to get that 
    \begin{equation*}
       \frac{\Gamma(u+k_j)}{\Gamma(k_j)}  \sim  \frac{\Gamma(u+k_j+1)}{\Gamma(k_j+1)}\sim e^{-u} \frac{(u+k_j)^{u+k_j+\frac{1}{2}}}{k_j^{k_j+\frac{1}{2}}} = k_j^{u} \, e^{(u+k_j+\frac{1}{2}) \ln{(1+ \frac{u}{k_j})} - u} \sim k_j^u ,
    \end{equation*}
    which holds uniformly in $1\le j\le \ell$ and $0\le u \le 100 \min_{1\le j\le \ell} k_j^{1/2}$.
    Similarly, we have $\frac{\Gamma(n+1)}{\Gamma(u+n+1)} \sim n^{-u}$. This finishes the proof of (i).

    Now we prove (ii). Let us begin with the case where $n\lesssim 1$. Then it follows from \eqref{equ-m} that $m\lesssim 1$, and \eqref{beta} is trivial. 
    
    In the opposite case where $n$ is large enough, so $n > 2(1+m)$. In this case, the second ``$\sim$" is due to \eqref{Gamma} as well as the Stirling's formula \eqref{St}.
    
    Aiming now at the first ``$\sim$" in \eqref{beta}. ``$\le$'' is trivial.  For ``$\gtrsim$'', using the change of variables $s=1-r^2$, we obtain
    \begin{equation*}
        I_{m,n}:= \int_{ ( 1+ \frac{2m}{n} )^{-\frac{1}{2}} }^1 \, (1-r^2)^{\frac{m}{2}} r^{2n+ v -1} \, dr = \frac{1}{2} \int_0^{\frac{2m}{n+2m}} (1-s)^{n+\frac{v}{2}-1} \, s^{\frac{m}{2}}\, ds. 
    \end{equation*}

    By \eqref{equ-m} and Taylor's formula $\log(1-s)= -s+O(s^2)$ at $0$, it holds that
    $$
    (1-s)^{n+\frac{v}{2}-1} = e^{(n+\frac{v}{2}-1) \log(1-s)} \sim e^{-(n+\frac{v}{2}-1)s}, \qquad \forall \, 0 < s < \frac{2m}{n+2m}.
    $$
    Thus,
    \begin{equation*}
        I_{m,n} \sim \int_0^{\frac{2m}{n+2m}}e^{-(n+\frac{v}{2}-1)s} s^{\frac{m}{2}} \,ds = (n+\tfrac{v}{2}+1)^{-\frac{m}{2}-1} \int_0^{m \, \frac{2n+v-2}{n+2m}} e^{-\rr} \, \rr^{\frac{m}{2}} \, d\rr.
    \end{equation*}
    Note that $(n+\frac{v}{2}+1)^{-\frac{m}{2}-1}\sim n^{-\frac{m}{2}-1}$ by \eqref{equ-m}. And $n > 2(1+m)$ gives $\frac{2n+v-2}{n+2m}>1$. 
    In view of the second ``$\sim$" in \eqref{beta}, it remains to show that:
    \begin{equation}\label{incomp}
        \int_0^m s^{\frac{m}{2}} e^{-s} \, ds \sim \Gamma(\tfrac{m}{2}+1), \qquad \forall \, m \ge 1.
    \end{equation}
Indeed it is trivial for $m\lesssim 1$. For $m$ large enough, the standard trick of $e^{-\rr} = e^{-\rr/2} \, e^{-\rr/2}$ implies that 
    \begin{equation*}
        \int_m^\infty s^{\frac{m}{2}} e^{-s} \, ds \le e^{-\frac{m}{2}} \int_m^\infty s^{\frac{m}{2}} e^{-\frac{s}{2}}\, ds = 2 \left(\frac{2}{e} \right)^{\frac{m}{2}}  \int_{\frac{m}{2}}^\infty \rr^{\frac{m}{2}} e^{-\rr} \, d\rr < \frac{1}{2} \int_0^{+\infty} \rr^{\frac{m}{2}} e^{-\rr} \, d\rr.
    \end{equation*}
    Then we also get \eqref{incomp}.  
\end{proof}

\subsection{A useful Lemma}\label{ssec-lemma}

For $R>0$ and $\nu \ge 0$, we set
\begin{equation}\label{equ-unitball}
\mathbf{D}_{\nu} := \fint_{B_1}  \,  \left| \mathbb{W}x \right|^{\nu}   dx = \fint_{B_1} \left( \sumj a_j^2 \, |\xj|^2 \right)^{\frac{\nu}{2}}  dx.
\end{equation}
Uniform estimates for $\mathbf{D}_{\nu}$ w.r.t. $(a_1, \ldots, a_{\ell}, \nu)$ will play a key role in the work. In the special case of classical H-type groups, it holds that $|\mathbb{W}x|=|x|$, and  one easily get $\mathbf{D}_\nu = \frac{2n}{2n+\nu}$ by polar coordinates.  To deal with the general cases, we need to introduce a convex function.
 
\begin{lemma}\label{lem-WW}
 With the dimension restriction $(n,m)$ in \eqref{equ-W}-\eqref{equ-m}, and the constant $\CCC$ defined by \eqref{equ-KCH},  
we have uniformly in $1\le \nu\le 10m$ and $0 < a_1 < \ldots < a_{\ell} = 1$ that
$$
\mathbf{D}_{\nu}  \, \sim \, \CCC^{ \, \frac{\nu}{2}}.
$$ 
\end{lemma}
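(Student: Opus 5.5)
Write $Q(x):=|\mathbb{W}x|^2=\sumj a_j^2|\xj|^2$, so that $\mathbf{D}_\nu=\fint_{B_1}Q^{\nu/2}\,dx$. The key exact identity is
\[
\fint_{B_1}Q\,dx=\sumj a_j^2\fint_{B_1}|\xj|^2dx=\sumj a_j^2\frac{k_j}{n+1}=\frac{n}{n+1}\CCC,
\]
which follows from Lemma \ref{lem-a}(i) with $u=1$, where the Gamma-ratio is exact. So $\mathbf{D}_2\sim\CCC$. The plan is to show that $Q$ is concentrated around its mean: its fluctuations are of relative size $O(n^{-1/2})$, while the exponent is $\nu/2\le 5m\lesssim \log n$ by \eqref{equ-m}.

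For the lower bound I will use convexity. For $\nu\ge 2$, the map $s\mapsto s^{\nu/2}$ is convex, so Jensen gives $\mathbf{D}_\nu\ge (\frac{n}{n+1}\CCC)^{\nu/2}\ge e^{-\nu/(2n)}\CCC^{\nu/2}\gtrsim \CCC^{\nu/2}$, since $\nu\le 10m\lesssim n$. For $1\le\nu<2$, I will use Hölder in the reverse direction: $\mathbf{D}_2\le \mathbf{D}_\nu^{\theta}\mathbf{D}_4^{1-\theta}$ with $2=\theta\nu+4(1-\theta)$. The lower bound then follows once $\mathbf{D}_4\lesssim\CCC^2$ is known, and that is part of the upper bound below.

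For the upper bound, when $\nu\le 2$ Jensen for the concave power gives $\mathbf{D}_\nu\le \mathbf{D}_2^{\nu/2}\le\CCC^{\nu/2}$ directly. For $\nu>2$ I will avoid expanding $Q^{\nu/2}$ blockwise and use polar coordinates instead. Writing $x=r\omega$ with $\omega\in\mathbb{S}^{2n-1}$,
\[
\mathbf{D}_\nu=\frac{2n}{2n+\nu}\int_{\mathbb{S}^{2n-1}}Q(\omega)^{\nu/2}\,d\sigma(\omega),
\]
where $\sigma$ is the normalized surface measure. Represent $\omega=G/|G|$ with $G$ a standard Gaussian on $\R^{2n}$. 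Then $Q(\omega)=Q(G)/|G|^2$, and $|G|$ is independent of $\omega$, so
\[
\mathbb{E}\,Q(G)^{\nu/2}=\mathbb{E}|G|^{\nu}\,\int Q(\omega)^{\nu/2}d\sigma .
\]
Moreover $\mathbb{E}|G|^\nu=2^{\nu/2}\Gamma(n+\nu/2)/\Gamma(n)\ge(2n)^{\nu/2}$ for $\nu\ge 0$ (by log-convexity of $\Gamma$). Hence it suffices to prove $\mathbb{E}\,Q(G)^{\nu/2}\lesssim(2K)^{\nu/2}$, where $Q(G)=\sumj a_j^2\chi^2_{2k_j}$ is a sum of independent scaled chi-squares with mean $2K$.

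To bound this Gaussian moment, I will use the moment generating function. With $p=\nu/2\le 5m$ and $s>0$, we have $y^p\le (p/(es))^p e^{sy}$, and
\[
\mathbb{E}e^{sQ(G)}=\prod_j(1-2sa_j^2)^{-k_j}.
\]
Choose $s=p/(2K)$. This is admissible because $2sa_j^2\le p/K\le p/n\cdot\CCC^{-1}$. The remaining worry is that $\CCC$ might be small. Note, however, that $K\ge a_\ell^2k_\ell=k_\ell\ge (m+1)/2$, and more sharply $\log(1-2sa_j^2)^{-1}\le 2sa_j^2+O((sa_j^2)^2)$, so
\[
\log\mathbb{E}e^{sQ}\le p+O\Big(p^2\sum_j a_j^4k_j/K^2\Big)\le p+O(p^2/K).
\]
Now $p^2/K\lesssim m^2/k_\ell$, which is bounded by \eqref{equ-m}, since $m\lesssim\log k_j$. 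This yields $\mathbb{E}Q^{p}\lesssim (2K)^p\,(p/(es))^p e^{p}/(2K)^p=(2K)^p$, as desired. The main obstacle is exactly this step: controlling the second-order term uniformly requires using $k_\ell\ge(m+1)/2$ together with the logarithmic bound on $m$, and checking that $2sa_j^2<1/2$ holds, for which $p\le 5m\lesssim\log k_\ell\le K/4$ once $n$ is large. Small $n$ is trivial, because there $m\lesssim1$ and all quantities are comparable.
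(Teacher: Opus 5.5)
Your route differs from the paper's, and after one repair it works. As written, though, the upper bound for $\nu>2$ has a hole. The choice $s=p/(2K)$ needs $2sa_\ell^2=p/K<1$, and $p/K\le 1/2$ for your second-order expansion of $\log(1-2sa_j^2)^{-1}$; that is, it needs $K\ge 2p$. Since $p\le 5m\le 10\log_2(4k_\ell)$ and $K\ge k_\ell$, this holds once $k_\ell$ is large, \emph{not} once $n$ is large. Your fallback ``small $n$ is trivial'' therefore does not cover the remaining regime. Concretely, take $\ell=2$, $k_2=1$, $k_1=n-1$, $m=1$ (allowed by \eqref{equ-m}) and $a_1^2\le 1/n$. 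Then $K<2$ and $\CCC<2/n$, so for $\nu\ge 4$ you have $p/K>1$ and $\mathbb{E}e^{sQ(G)}=+\infty$. Here $n$ and $\CCC^{-1}$ are unbounded, so this is not a bounded-parameter situation.

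The fix is short. If $K<2p$, then $k_\ell\le K<10m\le 20\log_2(4k_\ell)$ forces $k_\ell,m,p,K\lesssim 1$, with $K\ge 1$. Take $s=1/4$ instead. By convexity of $x\mapsto-\log(1-x/2)$ on $[0,1]$, $\mathbb{E}e^{Q(G)/4}=\prod_j(1-a_j^2/2)^{-k_j}\le 2^K$. Hence $\mathbb{E}Q(G)^p\le(4p/e)^p2^K\lesssim 1\le(2K)^p$.

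Two smaller remarks. First, your claim $\mathbb{E}|G|^\nu\ge(2n)^{\nu/2}$ for all $\nu\ge0$ is false for $0<\nu<2$: Wendel's inequality gives $\Gamma(n+p)\le n^p\Gamma(n)$ for $0\le p\le 1$. You only use it for $\nu>2$, where log-convexity does give it, since the secant slope of $\log\Gamma$ on $[n,n+p]$ is at least $\log n$ when $p\ge 1$. Second, your Hölder-interpolation lower bound for $1\le\nu<2$ needs $\mathbf{D}_4\lesssim\CCC^2$, and you can get this without the MGF. Compute $\mathbb{E}Q(G)^2=4K^2+4\sumj a_j^4k_j\le 8K^2$ and $\mathbb{E}|G|^4=4n(n+1)$, which give $\mathbf{D}_4\le 2\CCC^2$ directly.

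For comparison, the paper's upper bound uses $\mathbf{D}_\nu^2\le\mathbf{D}_{2\nu}$ and convexity of $w\mapsto\fint_{B_1}(\sumj w_j|\xj|^2)^\nu dx$ on the simplex $\{\sumj 2k_jw_j=1\}$. The maximum then sits at a vertex, which reduces everything to the single-block moment $\fint_{B_1}|\xj|^{2\nu}dx\sim(k_j/n)^\nu$ from Lemma \ref{lem-a}(i). Its lower bound for $\nu<2$ is reverse Minkowski plus the same lemma. That proof is shorter, reuses the Gamma-ratio computation already done, and needs no case split. Your Gaussian/chi-square argument is self-contained and makes the concentration mechanism explicit. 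The sharp choice $s=p/(2K)$ costs only $e^{O(p^2/K)}$, with $a_j\le 1$ used to bound $\sumj a_j^4k_j\le K$. It is also less lossy: it needs only $p\lesssim\sqrt{K}$, whereas the vertex argument must control every block, including those with small $k_j$, and so needs $\nu\lesssim\min_j k_j^{1/2}$. Under \eqref{equ-m} both ranges contain $1\le\nu\le 10m$.
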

\begin{proof}
First of all, from the symmetry, we get  
\begin{align}
  \fint_{B_1} x_q^2 \, dx  = \frac{1}{2n} \fint_{B_1}|x|^2 \, dx = \frac{1}{2n+2} \ (q = 1, \ldots, 2 n), \quad \mbox{so} \quad   \mathbf{D}_2 = \frac{n}{n+1}\CCC.
\end{align}

Next we prove $\mathbf{D}_{\nu}   \gtrsim \CCC^{ \, \frac{\nu}{2}}$. If $10 m \ge \nu \ge 2$,   H\"{o}lder's  inequality gives  $\mathbf{D}_{\nu} \ge \mathbf{D}_{2}^{ \, \frac{\nu}{2}} \sim \CCC^{\frac{\nu}{2}}$ in view of the dimensional restriction \eqref{equ-m} and the Stirling's formula.
In the opposite case where $1\le \nu < 2$, reverse Minkowski inequality implies that:
\begin{equation*}
|B_1|\, \mathbf{D}_{\nu}   =   \left\| \sum_{j=1}^{\ell} a_j^2 \, |\xj|^2 \right\|_{L^{\frac{\nu}{2}}(B_1 )}^{\frac{\nu}{2}} \ge \ \left( \sum_{j=1}^{\ell} a_j^2 \, \left\| |\xj|^2 \right\|_{L^{\frac{\nu}{2}}(B_1)} \right)^{\frac{\nu}{2}}. 
\end{equation*}
Then we apply Lemma \ref{lem-a} (i) to get the desired estimate.

Aim now at the upper bound $\mathbf{D}_{\nu} \lesssim \CCC^{ \, \frac{\nu}{2}}$.  It follows from H\"{o}lder's inequality that $\mathbf{D}_{\nu}^2 \le \mathbf{D}_{2\nu}$. Then it suffices to show
\begin{equation*}
\mathbf{D}_{ 2\nu} \mathrm{Tr}\left( \mathbb{W}^2 \right)^{-\nu} =   \fint_{B_1} \, \left( 
\sum_{j=1}^{\ell} \frac{a_j^2 }{\mathrm{Tr} \left( \mathbb{W}^2 \right)} \, |\xj|^2  \right)^{\nu}  dx \, \lesssim \, (2n)^{-\nu}.
\end{equation*}
Consider the following function $\mathfrak{F}$ on $\R_+^\ell = [0,\infty)^\ell$,
\begin{equation*}
w= (w_1, \cdots, w_{\ell}) \, \longmapsto \mathfrak{F}(w) =  \fint_{B_1} \left( 
\sum_{j=1}^\ell w_j |\xj|^2  \right)^{\nu} dx. 
\end{equation*}
It is convex because $\nu \ge 1$. We have
$$
\mathbf{D}_{2 \nu} \mathrm{Tr}\left( \mathbb{W}^2 \right)^{-\nu} \le \max_{w \in \mathscr{K} } \mathfrak{F}(w) \ \ \mbox{with} \ \  \mathscr{K} = \left\{ q \in \R_+^\ell: \ \sumj 2 k_j q_j =1 \right\} .
$$
The maximum is attained at some extreme point of $\mathscr{K}$,  in other words, 
\begin{equation*}
\mathbf{D}_{ 2 \nu} \mathrm{Tr}\left( \mathbb{W}^2 \right)^{-\nu} \le \max_{1\le j \le \ell} \ \frac{1}{ (2k_j)^{\nu }} \fint_{B_1} |\xj|^{2 \nu} dx \, \sim \, (2n)^{-\nu}
\end{equation*}
by means of Lemma \ref{lem-a} (i), which finishes the proof.
\end{proof}

\begin{corollary}\label{corb}
     We have uniformly in $\boldsymbol{\beta}=(\beta_1,\cdots, \beta_m) \in [0,\infty)^m$ and $\frac{1}{2}\le \alpha \le 1$ that
\begin{equation*}
    \int_{B_1}  \, \prod_{l=1}^{m} \left(  |\mathbb{W}x|^2 + \beta_l \right)^{\alpha} \, dx  \sim |B_1| \, \prod_{l=1}^{m} \left( \CCC + \beta_l \right)^{\alpha}.
\end{equation*}
\end{corollary}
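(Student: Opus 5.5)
The plan is to expand the product and reduce everything to Lemma \ref{lem-WW}, together with the simple bound $|\mathbb{W}x|^2 \le 1$ on $B_1$ (recall $a_\ell = 1$). For each subset $S \subset \{1,\dots,m\}$ write $\beta^{S} := \prod_{l\notin S}\beta_l$. Since every factor is nonnegative, the elementary inequality $(u+v)^\alpha \sim u^\alpha + v^\alpha$ for $u,v\ge 0$ and $\frac12\le\alpha\le1$ gives
\begin{equation*}
\prod_{l=1}^m \left(|\mathbb{W}x|^2+\beta_l\right)^\alpha \;\in\; \left[2^{-m(1-\alpha)},\,1\right]\cdot \sum_{S\subset\{1,\dots,m\}} |\mathbb{W}x|^{2\alpha |S|}\,(\beta^{S})^{\alpha}.
\end{equation*}
The constant $2^{-m(1-\alpha)}$ is $\ge 2^{-m/2}$, which is \emph{not} uniform in $m$. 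For that reason I will avoid this crude splitting and compare the two sides term by term with matching constants.

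To get matching constants, rewrite the left side as $\int_{B_1}\prod_l (|\mathbb{W}x|^2+\beta_l)^\alpha\,dx$ and compare it with $|B_1|\prod_l(\CCC+\beta_l)^\alpha$ through the ratio
\begin{equation*}
\fint_{B_1}\prod_{l=1}^m\left(\frac{|\mathbb{W}x|^2+\beta_l}{\CCC+\beta_l}\right)^{\alpha}dx .
\end{equation*}
Set $s = |\mathbb{W}x|^2/\CCC$. Each factor equals $\left(\theta_l s + 1-\theta_l\right)^\alpha$ with $\theta_l = \CCC/(\CCC+\beta_l)\in(0,1]$, and $s\mapsto(\theta s+1-\theta)^\alpha$ lies between $\min(s,1)^\alpha$ and $\max(s,1)^\alpha$. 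Hence the ratio is sandwiched between $\fint_{B_1}\min(s,1)^{\alpha m}$ and $\fint_{B_1}\max(s,1)^{\alpha m}$, and it suffices to show both are $\sim 1$.

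For the upper bound, use $\max(s,1)^{\alpha m}\le 1+s^{\alpha m}$ and Lemma \ref{lem-WW} with $\nu=2\alpha m\in[1,2m]\subset[1,10m]$, which gives $\fint s^{\alpha m} = \CCC^{-\alpha m}\mathbf{D}_{2\alpha m}\lesssim1$. For the lower bound, I would prove a concentration estimate: $\fint_{B_1}\min(s,1)^{\alpha m}\gtrsim 1$. Using $\min(s,1)\ge s\cdot\mathbf 1_{s\le1}+\mathbf 1_{s>1}$, it is enough that $\fint s^{\alpha m}\mathbf 1_{s\le 1}$ plus the measure of $\{s>1\}$ is $\gtrsim1$.

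The lower bound is the main obstacle. The natural route is to show $s$ concentrates near its mean $\mathbf{D}_2/\CCC = n/(n+1)$ at scale $O(n^{-1/2})$, so that $s^{\alpha m}\gtrsim1$ on a set of proportion $\gtrsim 1$ when $s\ge 1-c/m$. This uses $m\lesssim\log n$ from \eqref{equ-m}, which makes $(1-Cn^{-1/2})^{m}\sim1$. For the concentration itself I would compute $\mathbf{D}_4/\CCC^2$ exactly via Lemma \ref{lem-a}(i)-type Gamma computations and bound the variance by $\sum_j a_j^4 k_j/(n^2\CCC^2)\le a_\ell^2/(n\CCC)$. That bound is not small when $\CCC$ is tiny, so I would instead use the Hölder/log-convexity of $\nu\mapsto\mathbf{D}_\nu$ together with the lower bounds in Lemma \ref{lem-WW}. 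Concretely, $\mathbf{D}_{2\alpha m}\gtrsim\CCC^{\alpha m}$ and the truncated upper bound on $\{s>2\}$, controlled via $\mathbf{D}_{4\alpha m}$, give $\fint s^{\alpha m}\mathbf 1_{s\le 2}\gtrsim 1$, and then $\min(s,1)\ge s/2$ there. The only loss is a factor $2^{-\alpha m}$, which I expect to remove by replacing $2$ with $1+1/m$ via a sharper tail bound on $s$.
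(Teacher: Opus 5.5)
Your reduction is sound and your upper bound is complete. The factor-wise sandwich $\min(s,1)^{\alpha}\le(\theta_l s+1-\theta_l)^{\alpha}\le\max(s,1)^{\alpha}$ is correct. Moreover, $\max(s,1)^{\alpha m}\le 1+s^{\alpha m}$ together with Lemma \ref{lem-WW} at $\nu=2\alpha m\in[m,2m]$ bounds the ratio by $1+C$.

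The gap is the lower bound, which is exactly where uniformity in $m$ is at stake. The argument you carry out (cut at $\{s\le 2\}$, then use $\min(s,1)\ge s/2$) only yields $\fint_{B_1}\min(s,1)^{\alpha m}\gtrsim 2^{-\alpha m}$. That is the $C^{-m}$ loss you set out to avoid. Your proposed repair, a tail bound at level $1+1/m$, is not supplied. It cannot come from Lemma \ref{lem-WW} as stated: its two-sided bounds carry unspecified constants and say nothing about $s$ at scale $1/m$, so you would need a genuine concentration estimate. You also discarded the variance route too quickly. Your bound $a_\ell^2/(n\CCC)$ equals $1/K\le 1/k_\ell$, while \eqref{equ-m} gives $m\lesssim\log k_\ell$. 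Hence $m^2\,\mathrm{Var}(s)\lesssim(\log k_\ell)^2/k_\ell$, which is small unless $k_\ell$, and hence $m$, is bounded.

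No sharpening is actually needed: cut at level $1$ instead of $2$, as your own reduction suggests. Let $q=\alpha m$ and $P:=|\{x\in B_1: s>1\}|/|B_1|$. Lemma \ref{lem-WW} gives $\fint_{B_1}s^{q}\ge c$ (with $\nu=2q$) and $\fint_{B_1}s^{2q}\le C$ (with $\nu=4q\le 4m$). Cauchy--Schwarz then gives $\fint_{B_1}s^{q}\mathbf{1}_{\{s>1\}}\le C^{1/2}P^{1/2}$. So either $P\ge c^2/(4C)$, or $\fint_{B_1}s^{q}\mathbf{1}_{\{s\le 1\}}\ge c/2$. In both cases $\fint_{B_1}\min(s,1)^{q}\ge\min\{c/2,\,c^2/(4C)\}$, uniformly in $m$ and $\alpha$, and your argument closes.

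For comparison, the paper avoids the pointwise sandwich altogether. It expands $\prod_l(u^2+\beta_l)=\sum_{l=0}^m d_l u^{2l}$ with $d_l\ge 0$. It then applies H\"older (upper bound) or the reverse Minkowski inequality in $L^{\alpha}(B_1)$, $\alpha\le 1$ (lower bound), to $\big(\sum_l d_l|\mathbb{W}x|^{2l}\big)^{\alpha}$. Reverse Minkowski has constant $1$, and Lemma \ref{lem-WW} is uniform in $\nu=2l\alpha\in[1,2m]$, so $\|\,|\mathbb{W}x|^{2l}\|_{L^{\alpha}(B_1)}\gtrsim|B_1|^{1/\alpha}\CCC^{l}$ for each $l$. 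Resumming gives $\prod_l(\CCC+\beta_l)$ with no $m$-dependent loss.
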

\begin{proof}
Let $d_l = d_{l}(\boldsymbol{\beta},m) \ge 0 \, (1\le l \le m)$ be determined by
 \begin{align}\label{BEa1}
\prod_{l=1}^m \left( u^2 + \beta_l \right) = \sum_{l=0}^m d_l \, u^{2l}, \qquad  \forall \, u \in \R.
\end{align}
Combining this with H\"{o}lder inequality, we obtain that:
\begin{align*}
\int_{B_1}  \prod_{l=1}^{m} \left(  |\mathbb{W}x|^2 + \beta_l \right)^{\alpha} \, dx &\le |B_1|^{1 - \alpha} \left( \int_{B_1}  \prod_{l=1}^{m} (|\mathbb{W}x|^2 + \beta_l) \, dx \right)^{\alpha} \\
&= |B_1|^{1 - \alpha} \left( \int_{B_1}  \sum_{l=0}^{m}  d_l \, |\mathbb{W}x|^{2 l} \, dx \right)^{\alpha} \\
&\lesssim |B_1| \, \left( \sum_{l=0}^{m}  d_l \, \CCC^{l} \right)^{\alpha} = |B_1| \, \prod_{l=1}^{m} \left( \CCC + \beta_l \right)^{\alpha},
\end{align*}
where we have used Lemma \ref{lem-WW} in ``$\lesssim$''. 

Similarly, together \eqref{BEa1} with reverse Minkowski inequality, we have that:
\begin{align*}
\int_{B_1}  \prod_{l=1}^{m} \left(  |\mathbb{W}x|^2 + \beta_l \right)^{\alpha} \, dx &\ge \left( \sum_{l = 0}^m d_l \, \| |\mathbb{W}x|^{2 l} \|_{L^{\alpha}(B_1)} \right)^{\alpha} \gtrsim |B_1| \, \prod_{l=1}^{m} \left( \CCC + \beta_l \right)^{\alpha},
\end{align*}
which completes the proof.
\end{proof}

\subsection{The first step of proof of Theorem \ref{thm-voll}}\label{ssec-pfvol}
Inspired by \cite[Lemma 5.3]{BLZ25} and Theorem \ref{thm-Fx}, in $(x,\theta)$-coordinates, the main contribution of the volume comes from the region $|x|\approx R$ and $|\theta| \ll 1$. We will introduce a family of ellipsoidal surfaces which enables us to appropriately partition the region. Moreover,
the condition \eqref{equ-m} and Stirling’s formula will be used repeatedly without mention. 

Recall that $a_\ell =1$ and $K$ is defined by \eqref{equ-KCH}. For $R>0$ and $\delta\in (0,\pi)$, we define 
\begin{equation}
S_{\delta,R} := \left\{  x \in B_R  \, : \, \sumj |\xj|^2 \left( \frac{a_j \delta}{\sin{(a_j \delta})} \right)^2 =R^2  \right\}.
\end{equation}
We have the following observations. First,
 $S_{\delta_1,R} \cap S_{\delta_2,R} = \emptyset$ provided $\delta_1 \ne \delta_2$. Second,   $x \in S_{\delta,R}$ if and only if $x/R \in S_{\delta,1}$. Third, 
\begin{equation*}
B_R \backslash \left\{x \in \R^{2n} : x_{(\ell)}=0 \right\} \subset \bigcup_{0<\delta <\pi} S_{\delta,R}  \subset B_R.
\end{equation*}
Let $\mathcal{B}_{x,R} := \{ t \in \R^m : d_{\mathbb{B}}(x,t) < R \} $,  and then we can write 
\begin{align}
|B_{\mathbb{B}}(o,R)| = \int_{\Sa} \int_{\mathcal{B}_{x,R}} \, dt \, dx + \int_{ \Sb } \int_{\mathcal{B}_{x,R}} \, dt \, dx =: \boldsymbol{\mathbf{O}_1} + \boldsymbol{\mathbf{O}_2},
\end{align}
where we set 
\begin{align}
\Sa := \mathop{\bigcup}\limits_{0<\delta < \sqrt{ \K }} S_{\delta,R}, \qquad \Sb :=   \mathop{\bigcup}\limits_{\sqrt{ \K }< \delta < \pi} S_{\delta,R},
\end{align}
while we can see from the following proof that it is enough to take
\begin{equation}\label{KK}
    \K := \min\{\varepsilon, k_\ell^{-\frac{3}{4}} \},\quad \mbox{with an \ }\varepsilon>0 \ \mbox{sufficiently small.}
\end{equation}

\subsection{Bounds of $\boldsymbol{\mathbf{O}_1}$}\label{sssec-main}

 Our goal is to show that $\boldsymbol{\mathbf{O}_1}$ admits the desired estimates, i.e.  
\begin{align} \label{O1}
   \boldsymbol{\mathbf{O}_1} \sim |B_{\R^{2n+m}}(0, R)| \ \det \left( \frac{R^2 \, \CCC}{12}  \, \mathbb{I}_m + \mathbb{B}^T \mathbb{B} \right)^{\frac{1}{2}}.
\end{align}
To do this, by slightly modifying the arguments in \cite[p. 3788]{BLZ25}, the problem can be reduced to establishing the estimate \eqref{equ-IB} below.
In brief, {\color{blue}due to} the fact that $(\frac{r}{\sin r})^2 = 1 + \frac{r^2}{3} + O(r^4)$ for $r$ near $0$,  there exist constants $0<c_1<c_2$ such that 
\begin{equation}\label{lOl}
	 \int_{ 0 < R^2 -|x|^2 < c_1 \K  \, |\mathbb{W}x|^2  } |\mathcal{B}_{x,R}| \,  dx \le \boldsymbol{\mathbf{O}_1} \le \int_{ 0 < R^2 -|x|^2 < c_2 \K \,  |\mathbb{W}x|^2  } |\mathcal{B}_{x,R}| \,  dx.
\end{equation}

For $ R^2 -|x|^2 < c_2 \K \,  |\mathbb{W}x|^2$, we estimate the volume of $\mathcal{B}_{x,R}$ by means of the diffeomorphism $\Fx$ defined by \eqref{equ-Atheta}. The key point is that, in our setting, it follows from \cite[Theorem~VI.7.1]{Bhatia97} that the Jacobian determinant of $\Fx$ satisfies 
\begin{align}
    \det \big( D \Fx(\theta) \big) \sim \det \big( D \Fx(0) \big) = 6^{-m} \, \prod_{j=1}^m \left( \left| \mathbb{W}x \right|^2 + 12 b_j\right).
\end{align}
And a direct calculation yields that
\begin{align}
	  | \mathcal{B}_{x,R} |  \sim   C_m \, (R^2 -|x|^2)^{\frac{m}{2}}  \prod_{j=1}^m \left( \left| \mathbb{W}x \right|^2 + 12 b_j\right)^{\frac{1}{2}} \ \ \mbox{with} \ \ C_m = \left( \frac{1}{12} \right)^{\frac{m}{2}} \frac{|\mathbb{S}^{m-1}|}{m}.
\end{align}
By \eqref{lOl}, a scaling gives
$
\boldsymbol{\mathbf{O}_1} \sim C_m \, R^{2n+2m} \ \mathbf{I}_\mathbb{B},
$ 
with (for simplicity, we can omit the constant $c_i$)
\begin{equation}
\mathbf{I}_\mathbb{B} = \int_{  1 -|x|^2 < \K \, |\mathbb{W}x|^2  } (1 -|x|^2)^{\frac{m}{2}} \prod_{j=1}^m \left( |\mathbb{W}x|^2 + \frac{12 b_j}{R^2} \right)^{\frac{1}{2}} \, dx.
\end{equation}
Therefore, to prove \eqref{O1}, it remains to show
\begin{equation}\label{equ-IB}
\mathbf{I}_\mathbb{B} \sim \frac{\Gamma(\frac{m}{2}+1)}{n^{\frac{m}{2}+1}} \, |\mathbb{S}^{2n-1}| 
\prod_{j=0}^{m} \left( \CCC + \frac{12 b_j}{R^2} \right)^{\frac{1}{2}}    
=: E_{m,n}  \prod_{j=0}^{m} \left( \CCC + \frac{12 b_j}{R^2} \right)^{\frac{1}{2}},
\end{equation}
for which we need a lemma.
\begin{lemma}\label{lem-I0j}
   Let $\varrho$ be as in \eqref{KK}. We have, uniformly in $j\in \{0,1,\cdots,m\}$, that
   \begin{equation}\label{equ-jj}
\mathbf{M}_{0,j} := 
\int_{ 1 -|x|^2 < \K \, |\mathbb{W}x|^2  } (1-|x|^2)^{\frac{m}{2}} \, |\mathbb{W}x|^{j} \, dx \sim E_{m,n} \, \CCC^{\frac{j}{2}},
\end{equation}
where the implicit constant may depend on $\varepsilon$ in \eqref{KK}.
\end{lemma}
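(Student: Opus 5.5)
Write $x = r\omega$ with $r = |x|$ and $\omega \in \mathbb{S}^{2n-1}$. Then $|\mathbb{W}x| = r|\mathbb{W}\omega|$, and since $a_\ell = 1$ we have $|\mathbb{W}\omega| \le 1$. The constraint $1 - r^2 < \K r^2 |\mathbb{W}\omega|^2$ becomes
\[
\big(1+\K|\mathbb{W}\omega|^2\big)^{-1/2} < r < 1,
\]
so that
\[
\mathbf{M}_{0,j} = \int_{\mathbb{S}^{2n-1}} |\mathbb{W}\omega|^j \int_{(1+\K|\mathbb{W}\omega|^2)^{-1/2}}^1 (1-r^2)^{\frac m2} r^{2n+j-1}\, dr\, d\sigma(\omega).
\]

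\textbf{Upper bound.} Extend the $r$-integral to $(0,1)$. By Lemma \ref{lem-a} (ii) with $v = j \le m$, this gives $\mathbf{M}_{0,j} \lesssim \frac{\Gamma(\frac m2+1)}{n^{\frac m2+1}} \int_{\mathbb{S}^{2n-1}} |\mathbb{W}\omega|^j\, d\sigma$. Polar coordinates give $\int_{\mathbb{S}^{2n-1}} |\mathbb{W}\omega|^j\, d\sigma = (2n+j)|B_1|\,\mathbf{D}_j$, and $(2n+j)|B_1| \sim |\mathbb{S}^{2n-1}|$ because $j \le m \ll n$, or $n \lesssim 1$. Lemma \ref{lem-WW} then gives $\mathbf{D}_j \sim \CCC^{j/2}$ for $j \ge 1$; the case $j = 0$ is trivial. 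Hence $\mathbf{M}_{0,j} \lesssim E_{m,n}\CCC^{j/2}$.

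\textbf{Lower bound.} The difficulty is that the inner interval shrinks when $|\mathbb{W}\omega|$ is small. Since $\K \le k_\ell^{-3/4}$ may be much smaller than $2m/n$, Lemma \ref{lem-a} (ii) cannot be applied directly. I would proceed in three steps.

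\emph{Step 1: restrict to a good set of directions.} Let $G = \{\omega : |\mathbb{W}\omega|^2 \ge \CCC/2\}$. Since $\int_{\mathbb{S}^{2n-1}} |\mathbb{W}\omega|^2\, d\sigma \sim \CCC$ and $|\mathbb{W}\omega|^4$ has average $\sim \CCC^2$ (Lemma \ref{lem-WW} with $\nu = 4$, which requires $m \ge 1$, so $4 \le 10m$), Paley--Zygmund gives $\sigma(G) \gtrsim |\mathbb{S}^{2n-1}|$. Moreover, by Cauchy--Schwarz together with $\mathbf{D}_{2j} \sim \CCC^j$,
\[
\int_G |\mathbb{W}\omega|^j \ge \int_{\mathbb{S}^{2n-1}} |\mathbb{W}\omega|^j - \sigma(G^c)^{1/2}\Big(\int |\mathbb{W}\omega|^{2j}\Big)^{1/2}.
\]
This is not sufficient by itself. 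A cleaner route is to use $\int_G |\mathbb{W}\omega|^j \ge (\CCC/2)^{j/2}\sigma(G)$, but the factor $2^{-j/2}$ is exponential in $m$. To avoid this loss, I would take $G = \{|\mathbb{W}\omega|^2 \ge (1-c/m)\CCC\}$ instead and use concentration, since $|\mathbb{W}\omega|^2$ concentrates at $\CCC$ with fluctuations of size $\CCC/\sqrt{n}$ (in effect $\CCC/\sqrt{k_j}$) and $m \lesssim \log n$. Showing $\sigma(G) \gtrsim |\mathbb{S}^{2n-1}|$ with this $m$-dependent threshold is the main obstacle, and I would attack it with the fourth-moment variance bound from Lemma \ref{lem-a} (i).

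\emph{Step 2: lower bound of the inner integral on $G$.} On $G$ the lower limit satisfies $r_0 \le (1+\K\CCC/2)^{-1/2}$. Substituting $s = 1 - r^2$, the inner integral is at least $\tfrac12\int_0^{c\K\CCC} (1-s)^{n+\frac j2-1} s^{\frac m2}\, ds$. This is comparable to the full integral provided $\K\CCC n \gtrsim m$, which is guaranteed when $\CCC \ge \varepsilon^{-1} m/(n\K)$. In the opposite regime, where $\CCC$ is small, I would instead use the structure of the set: $|\mathbb{W}\omega|$ is dominated by the block $x_{(\ell)}$ and $k_\ell/n$ enters, so the fraction of the $r$-range that is retained is the point needing care. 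This regime is a second genuine difficulty, and I expect the allowed $\varepsilon$-dependence of the constants, together with the choice $\K \le k_\ell^{-3/4}$, to be exactly what makes it work.

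\emph{Step 3: combine.} Multiplying the bound from Step 2 by $\int_G |\mathbb{W}\omega|^j \gtrsim \CCC^{j/2}|\mathbb{S}^{2n-1}|$ gives $\mathbf{M}_{0,j} \gtrsim E_{m,n}\CCC^{j/2}$.
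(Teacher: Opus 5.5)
Your upper bound is correct and matches the paper's argument: polar coordinates, \eqref{beta}, and Lemma \ref{lem-WW} through \eqref{equ-Sj}. The lower bound, however, is not a proof. The concentration estimate in Step 1 and the ``small $\CCC$'' regime in Step 2 are both left open. The tool you propose for Step 1 would also not work: Lemma \ref{lem-a} (i) controls moments only up to multiplicative constants, so it cannot produce a variance bound of order $\CCC^2/k_\ell$.

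In fact neither difficulty exists; two observations are missing. First, because $a_\ell=1$ one has $\CCC=K/n\ge k_\ell/n$. With $\varrho=\min\{\varepsilon,k_\ell^{-3/4}\}$ and $m\le 2\log_2(4k_\ell)$ from \eqref{equ-m}, this gives
\[
\varrho\,\CCC\,n\ \ge\ \varrho\,k_\ell=\min\{\varepsilon k_\ell,\ k_\ell^{1/4}\}.
\]
This is $\gg m$ when $k_\ell\gg1$. When $k_\ell\lesssim1$, you have $m\lesssim1$ and $\varepsilon$-dependent constants are allowed. So there is no regime in which $\CCC$ is too small. Second, the good set does not need to sit near the mean of $|\mathbb{W}\omega|^2$; you dismissed the subtraction argument too quickly. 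If $\mathcal{E}=\{|\mathbb{W}\omega|^2\ge\eta\}$ with $\eta\ll\CCC$, then for $j\ge1$, trivially,
\[
\int_{\mathbb{S}^{2n-1}\setminus\mathcal{E}}|\mathbb{W}\omega|^j\,d\sigma\le\eta^{j/2}|\mathbb{S}^{2n-1}|\le(\eta/\CCC)^{1/2}\CCC^{j/2}|\mathbb{S}^{2n-1}|.
\]
By \eqref{equ-Sj} this is a small fraction of the full integral, uniformly in $j$, with no concentration argument and no $2^{-j/2}$ loss.

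This is exactly the paper's route. For $k_\ell\gg1$ it takes $\kappa_0=2m/n$ and $\mathcal{E}=\{w:\varrho|\mathbb{W}w|^2\ge\kappa_0\}$. On $\mathcal{E}$ the radial lower limit is at most $(1+2m/n)^{-1/2}$, so the first ``$\sim$'' in \eqref{beta} gives the full radial factor $\Gamma(\frac m2+1)n^{-\frac m2-1}$. The threshold satisfies $\eta=\kappa_0/\varrho=2mk_\ell^{3/4}/n\le 2mk_\ell^{-1/4}\,\CCC\ll\CCC$.

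For $j=0$ the subtraction gives nothing, so the paper argues differently. The complementary region $\{1-|x|^2\ge\varrho|\mathbb{W}x|^2\}$ is the ellipsoid $\{\sum_j(1+a_j^2\varrho)|x_{(j)}|^2\le1\}$, whose volume is at most $e^{-\varrho k_\ell/2}|B_1|$. By Cauchy--Schwarz against $\int_{B_1}(1-|x|^2)^m\,dx$, its contribution is $\lesssim 2^{m/2}e^{-k_\ell^{1/4}/4}E_{m,n}\ll E_{m,n}$, since $2^{m/2}\le 4k_\ell$. Alternatively, the Paley--Zygmund bound you already have gives $\sigma(\mathcal{E})\gtrsim|\mathbb{S}^{2n-1}|$ once $\eta\le\CCC/2$, which suffices for $j=0$.

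For $k_\ell\lesssim1$ the paper uses $\varrho=\varepsilon$ and $\kappa_0=\varepsilon^2/n$. Since $m\lesssim1$, the truncated radial integral is still $\sim_\varepsilon n^{-\frac m2-1}$, and $\kappa_0/\varrho=\varepsilon/n\ll k_\ell/n\le\CCC$.
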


\begin{proof}
Let $d\sigma$ be the standard surface measure on $\mathbb{S}^{2n-1}$. Using polar coordinates, we obtain
\begin{align}\label{equ-polar-coor}
\begin{split}
 &\mathbf{D}_j  = \frac{1}{(2n+j) \, |B_1|} \int_{\mathbb{S}^{2n-1}} |\mathbb{W} w|^j \, d\sigma(w), \quad \mbox{and} \\
 &\mathbf{M}_{0,j}  = \int_{\mathbb{S}^{2n-1}} |\mathbb{W} w|^j  \left(  \int_{ (1+ \K \, |\mathbb{W}w|^2 )^{-\frac{1}{2}} }^1 \, (1-r^2)^{\frac{m}{2}} r^{2n+j-1} \, dr \right) d\sigma(w).
\end{split}
\end{align}

Thus, Lemma \ref{lem-WW} gives 
\begin{equation}\label{equ-Sj}
    \int_{\mathbb{S}^{2n-1}} |\mathbb{W} w|^j \, d\sigma(w)  \sim \CCC^{\frac{j}{2}}\, (2n+j)\, |B_1| \sim \CCC^{\frac{j}{2}} \, |\mathbb{S}^{2n-1}|.
\end{equation}
 Combining this with \eqref{beta}, we can easily get the desired upper bound in \eqref{equ-jj}.

For the lower bound, notice that 
\begin{align}\label{equ-varE}
\begin{split}
    \mathbf{M}_{0,j} \ge & \int_{\mathcal{E}} |\mathbb{W} w|^j  \left(  \int_{ ( 1+ \kappa_0 )^{-\frac{1}{2}} }^1 \, (1-r^2)^{\frac{m}{2}} r^{2n+j-1} \, dr \right)  d\sigma(w), \\
    &\mbox{where} \quad  \mathcal{E}=\{ w\in \mathbb{S}^{2n-1}: \K \, |\mathbb{W}w|^2 \ge \kappa_0 \},
\end{split}
\end{align}
for some parameter $\kappa_0>0$ to be chosen momentarily. We consider two cases $k_{\ell}\gg 1$ and $k_{\ell} \lesssim 1$. 

\textbf{Case I. $k_{\ell}\gg 1$. } In such case, we have $\varrho = k_{\ell}^{-\frac{3}{4}} \ll 1$. Choosing $\kappa_0 = 2 \, m \, n^{-1}$ such that $\mathcal{E} \ne \emptyset$.

Let us begin with the case where $j \in \{1, \ldots, m\}$. It follows from \eqref{beta} that:
\begin{align}\label{MM}
\mathbf{M}_{0,j} \gtrsim  \frac{\Gamma(\frac{m}{2}+1)}{n^{\frac{m}{2}+1}} \int_\mathcal{E}  \, |\mathbb{W}w|^j \, d\sigma(w) \sim \frac{\Gamma(\frac{m}{2}+1)}{n^{\frac{m}{2}+1}} \int_{\mathbb{S}^{2n-1}}  \, |\mathbb{W}w|^j \, d\sigma(w) \sim E_{m,n} \, \CCC^{\frac{j}{2}},
\end{align}
where the first ``$\sim$" is due to  the following inequality
\begin{equation}\label{equ-EE}
    \int_{ \mathbb{S}^{2n-1} \backslash \mathcal{E}}  \, |\mathbb{W}w|^j \, d\sigma(w) \le \left(  \K^{-1} \kappa_0 \right)^{\frac{j}{2}}|\mathbb{S}^{2n-1} \backslash \mathcal{E}| \ll \int_{\mathbb{S}^{2n-1}}  \, |\mathbb{W}w|^j \, d\sigma(w),
\end{equation}
while ``$\ll$" in \eqref{equ-EE} 
comes from the fact that $ \K^{-1} \kappa_0 \ll \CCC$ by \eqref{equ-m}.  

For the opposite case $j = 0$.
It suffices to show
\begin{equation*}
II :=  \int_{ 1 -|x|^2 \, \ge \, \K \, |\mathbb{W}x|^2  } (1-|x|^2)^{\frac{m}{2}} \, dx \ll E_{m,n}.
\end{equation*}
In fact, H\"{o}lder's inequality implies that: 
\[
II \le \left( \int_{B_1} (1-|x|^2)^{m} \,dx  \right)^{\frac{1}{2}} \left( \int_{1 -|x|^2 \, \ge \, \K \, |\mathbb{W}x|^2} dx \right)^{\frac{1}{2}}.
\]
By polar coordinates and applying the second ``$\sim$ " in \eqref{beta} with $2 m$ instead of $m$, the first term in the right hand side has order $ E_{2m,n}^{\frac{1}{2}} \sim \left(\frac{\Gamma(m+1)}{n^m} \, |B_1| \right)^{\frac{1}{2}}$. For the second one, a simple calculation shows that it equals 
\begin{align} \label{Est1}
\exp\left\{-\frac{1}{2} \sum_{j = 1}^{\ell} k_j \log(1 + a_j^2 \, \K) \right\} \, | B_1 |^{\frac{1}{2}} \le e^{ -\frac{1}{4} \K\, k_\ell} \, |B_1|^{\frac{1}{2}},
\end{align}
which implies immediately the desired lower bound for $k_{\ell} \gg 1$.

\textbf{Case II. $k_{\ell}\lesssim 1$.} In such case, we have $\varrho = \varepsilon \ll 1$ and take $\kappa_0 = \varepsilon^2 n^{-1}$.
Since $m\lesssim k_\ell \lesssim 1$, estimates of type \eqref{beta} (with $2mn^{-1}$ replaced by $\varepsilon^2 n^{-1}$) still holds, with implicit constants depending on $\varepsilon$.  The proof in \textbf{Case I} involving \eqref{MM}-\eqref{equ-EE} can be applied here. Indeed, if $j\ge 1$, we use the fact $\varrho^{-1}\kappa_0 \ll \CCC$ to get \eqref{equ-EE}; if $j=0$, the proof can be simplified, for which we obtain \eqref{equ-EE} via polar coordinates.
\end{proof}

Now we are in a position to prove \eqref{equ-IB}.
Note that if $\mathbb{B} = 0$, we have $\mathbf{I}_{\mathbb{B}}=\mathbf{M}_{0,m}$ and Lemma \ref{lem-I0j} applies.
Thus, we are left to prove the case $\mathbb{B} \neq 0$.

Let us begin with the lower bound. Similar to \eqref{BEa1}. Let $d_j := d_j(R, \A) \ge 0$ ($0 \le j \le m$) denote the coefficients determined by
\begin{align}  \label{BEa}
\prod_{j=1}^m \left( u^2 + \frac{12 b_j}{R^2} \right) = \sum_{j=0}^m d_j \, u^{2j}, \qquad  u \in \R.
\end{align}

By the reverse Minkowski inequality, we get that
\begin{equation}\label{equ-Min}
\mathbf{I}_\mathbb{B}^{\, 2} \, \ge \,  \sum_{j=0}^m d_j \left( \int_{ 1 -|x|^2 < \K \, |\mathbb{W}x|^2  } (1-|x|^2)^{\frac{m}{2}} \, |\mathbb{W}x|^{j} \, dx \right)^2,
\end{equation}
 which, together with Lemma \ref{lem-I0j}, implies that
\[
\mathbf{I}_\mathbb{B} \gtrsim  E_{m,n} \left( \sum_{j=1}^m d_j \, \CCC^j \right)^{\frac{1}{2}}  = E_{m,n} \prod_{j=0}^{m} \left( \CCC + \frac{12 b_j}{R^2} \right)^{\frac{1}{2}}.
\]

We turn to the upper bound of $\mathbf{I}_\mathbb{B}$. In fact, using  Lemma \ref{lem-I0j} with $j=0$ and H\"{o}lder's inequality, we deduce
\begin{align*}
\mathbf{I}_\mathbb{B}^{\, 2} &\lesssim E_{m,n}  \int_{ 1 -|x|^2 < \K \, |\mathbb{W}x|^2  } (1-|x|^2)^{\frac{m}{2}} \prod_{j=1}^m \left( |\mathbb{W}x|^2 + \frac{12 b_j}{R^2} \right) \, dx \\
&\lesssim  E_{m,n}^2 \, \prod_{j=1}^m \left( \CCC + \frac{12 b_j}{R^2} \right),
\end{align*}
by repeating the above argument using \eqref{BEa}. These estimates imply \eqref{equ-IB}, which concludes the proof of \eqref{O1}.

\subsection{Estimations for $\boldsymbol{\mathbf{O}_2}$}\label{I_2}

Let $b_0 =0$. For any $0\le l \le m$ and $x\in \mathop{\cup}\limits_{0<\delta<\pi} S_{\delta,R}$. There exists a unique $\widetilde{\theta}_l = \widetilde{\theta}_l (x) \in (0,\pi)$ such that
\begin{equation}\label{thetaL}
 \sumj |\xj|^2 \left( \frac{a_j \widetilde{\theta}_l }{\sin{(a_j \widetilde{\theta}_l })} \right)^2 + 4 b_l \, \widetilde{\theta}_l^{\, 2} =R^2.
\end{equation}
Using the inequality $\frac{r}{\sin r} \ge 1+ \frac{r^2}{6}$ for $0 < r < \pi$ (see \cite[eq. (5.7)]{BLZ25}),  we have
\[
\widetilde{\theta}_l \le \frac{1}{2} (R^2 - |x|^2 )^{\frac{1}{2}} \left( 
\frac{1}{12} |\mathbb{W}x|^2 + b_l  \right)^{-\frac{1}{2}}, \qquad 0 \le l \le m.
\]
 Let $g=(x,t)$ with $x\in \mathop{\cup}\limits_{0<\delta<\pi} S_{\delta,R}$ satisfying $d_\mathbb{B}(g)<R$ and $x_{(\ell)}\ne 0$, and $\theta = \theta(g)$ be as in \eqref{equ-Atheta}.
Then by the expression of $d_\mathbb{B}$ (see \eqref{equ-dBB}) and the monotonicity of the function $ \frac{v}{\sin v}$ on $(0,\pi)$, we know $|\theta| \le \widetilde{\theta}_0$ and $|\theta_l| \le \widetilde{\theta}_l$ ($1 \le l \le m$).
 Since $\widetilde{\mu}$ is increasing on $(0, \  \pi)$ (cf. \eqref{MU}), we obtain that:
 \begin{align*}
 	|\mathcal{B}_{x,R}| \le \prod_{l=1}^m \widetilde{\theta}_l \left( 
 	\sumj a_j^2 \, |\xj|^2 \, \widetilde{\mu}(a_j \widetilde{\theta}_0) + 2 b_l \right) \le c^m \, (R^2 - |x|^2 )^{\frac{m}{2}} \, \mathcal{K}(x;\widetilde{\theta}_0 (x)),
 \end{align*}
where
\begin{equation}\label{mathK}
\mathcal{K}(x;r) = \prod_{l=1}^m \left( \sumj a_j^2 \, |\xj|^2 \, \widetilde{\mu}(a_j r) +  b_l \right)  \left( |\mathbb{W}x|^2  +  b_l \right)^{-\frac{1}{2} }, \qquad r > 0.    
\end{equation}

Plugging this estimate into $\boldsymbol{\mathbf{O}_2}$ and using the scaling $x \mapsto Rx$, we deduce that
\begin{align*}
\boldsymbol{\mathbf{O}_2} 
& \le c^m R^{2n+m}   \int_{\SBb} (1-|x|^2)^{\frac{m}{2}} \, \mathcal{K}\left( Rx ; \widetilde{\theta}_0 (Rx) \right) \, dx  =: c^m R^{2n+m} \mathbf{K}.
\end{align*}

Next, in view of \eqref{equ-m} and by considering separately the cases where $m \lesssim 1$ and $m\gg 1$, we can choose $\gamma \in (1,2)$ such that 
\begin{equation}\label{equ-mr}
    2 k_\ell > \gamma \, m + \frac{1}{2}.
\end{equation} 
For instance, we can take $\gamma = \frac{201}{200}$.
Then H\"{o}lder's inequality gives 
\begin{equation*}
    \mathbf{K} \le   \left( \int_{B_1} (1-|x|^2)^{\frac{m\gamma'}{2}} \, dx \right)^{1/\gamma'}  \left(  \int_{\SBb} \mathcal{K} \big(Rx ; \widetilde{\theta}_0 (Rx) \big)^{\gamma} \, dx \right)^{1/\gamma} =: {\mathbf{K}}_1 \, {\mathbf{K}}_2.
\end{equation*}

First, by polar coordinates, we get
\begin{equation}\label{equ-K1}
    {\mathbf{K}}_1 \lesssim c^m \, \Gamma(\tfrac{m}{2}+1) \, n^{-\frac{m}{2}} |B_1|^{1/\gamma'}.
\end{equation}
To estimate ${\mathbf{K}}_2$, we decompose the domain into annuli. More precisely, 
let $\delta_0 = \K^{\frac{1}{2}}$ and $\delta_k = \pi - 2^{-k}$ ($k\ge 1$), and then
\[
{\mathbf{K}}_2^\gamma = \sum_{k=0}^{\infty} \int_{ \bigcup_{  \delta_k < \delta < \delta_{k+1} } S_{\delta,1} }  \mathcal{K} \big(Rx;\widetilde{\theta}_0 (Rx) \big)^{\gamma} \, dx =: \sum_{k=0}^{\infty} J_k.  
\]
Since $\mathcal{K}(x;\cdot)>0$ is increasing on $(0,\pi)$ and $\widetilde{\theta}_0(Rx) = \delta $ for $x\in S_{\delta,1}$, it follows that
\begin{align*}
J_k \le \int_{ \bigcup_{  \delta > \delta_k   } S_{\delta,1} } \mathcal{K}(Rx;\delta_{k+1})^\gamma \, dx = \int_{ \sumj |\xj|^2 \left(  
\frac{a_j \delta_k}{\sin{(a_j \delta_k})} \right)^2 < 1 } \mathcal{K}(Rx; \delta_{k+1})^\gamma \, dx.
\end{align*}
From \eqref{MU} and the elementary equality 
\begin{align} \label{EIa}
\frac{\sin{r}}{r} = \prod_{j = 1}^{+\infty} \left(1 - \frac{r^2}{j^2 \pi^2} \right), \qquad r \in \R,
\end{align}
we {have uniformly in $j,k\ge 1$ that}:
\begin{equation}\label{equ-sind}
    \frac{\sin{(a_j \delta_k)}}{a_j \delta_k} \ge \frac{\sin  \delta_k}{ \delta_k} \quad \mbox{and} \quad  \left( \frac{\sin{(a_j \delta_k)}}{a_j \delta_k} \right)^2 \widetilde{\mu}(a_j \delta_{k+1}) \lesssim 1.
\end{equation}
Then
\begin{align*}
   \prod_{l=1}^m (R^2 \, |\mathbb{W}x|^2 + b_l)^{-\frac{1}{2}} &= \prod_{l=1}^m \left(  \frac{\sin \delta_k}{\delta_k} \right)^{-1} \left[ R^2 \, \sumj a_j^2 \, |\xj|^2 \,\left(  
\frac{\delta_k}{\sin{\delta_k}} \right)^2 +  b_l \left(  
\frac{\delta_k}{\sin{\delta_k}} \right)^2 \right]^{-\frac{1}{2}} \\
&\le \left(  \frac{\sin \delta_k}{\delta_k} \right)^{-m} \prod_{l=1}^m \left(  
R^2 \, \sumj a_j^2 \, |\xj|^2 \,\left( \frac{a_j \delta_k}{\sin{(a_j \delta_k)}} \right)^2 +  b_l \right)^{-\frac{1}{2}},
\end{align*}
and 
\begin{align*}
    \prod_{l=1}^m \left[ R^2 \, \sumj a_j^2 \, |\xj|^2 \, \widetilde{\mu}(a_j \delta_{k+1}) +  b_l  \right] \le C^m \, \prod_{l=1}^m \left[ R^2 \, \sumj a_j^2 \, |\xj|^2 \,\left( \frac{a_j \delta_k}{\sin{(a_j \delta_k)}} \right)^2 +  b_l \right].
\end{align*}
Hence we obtain
\begin{align*}
  \mathcal{K}(Rx; \delta_{k+1}) \le C^m  \, \left( \frac{\sin \delta_k}{\delta_k} \right)^{-m  } \, \prod_{l=1}^m \left(  
R^2 \, \sumj a_j^2 \, |\xj|^2 \,\left(  
\frac{a_j \delta_k}{\sin{(a_j \delta_k)}} \right)^2 +  b_l \right)^{\frac{1}{2}}.
\end{align*}
Now perform linear transform $\left\{ y_{(j)} =  \frac{a_j \delta_k}{\sin{(a_j \delta_k)}} \xj \right\}_{j=1}^\ell $. This yields 
\[
J_k \le c^m \left( \frac{\sin \delta_k}{\delta_k} \right)^{\frac{1}{2}} \, \left( \frac{\sin \delta_k}{\delta_k} \right)^{2 k_{\ell} - m\gamma - \frac{1}{2}} \int_{B_1}  \, \prod_{l=1}^{m} \left( R^2 \, |\mathbb{W}y|^2 + b_l \right)^{\gamma/2} \, dy.
\]
 By Corollary \ref{corb}, we have 
\[
\int_{B_1}  \, \prod_{l=1}^{m} \left( R^2 \, |\mathbb{W}y|^2 + b_l \right)^{\gamma/2} \, dy  \sim |B_1| \, \prod_{l=1}^{m} \left(R^2 \, \CCC + b_l \right)^{\gamma/2}.
\]
Moreover, by \eqref{equ-mr} and the elementary inequality $\frac{\sin r}{r} \le e^{-\frac{1}{120} r^2 }$ whenever $r\in (0,\pi)$, we know
\begin{align*}
J_k &\le C^m e^{-\frac{1}{120} (2k_\ell -m \gamma -\frac{1}{2}) \K} |B_1| \left(\frac{\sin \delta_k}{\delta_k}\right)^{\frac{1}{2}} \prod_{L=1}^{m} \left(R^2 \, \CCC + b_L \right)^{\gamma/2}.
\end{align*}
Since $\frac{\sin \delta_k}{ \delta_k } \sim 2^{-k}$, summing in $k$ gives
\begin{equation}\label{equ-K2}
    \mathbf{K}_2^\gamma \le C^m e^{-\frac{1}{120} (2k_\ell -m \gamma -\frac{1}{2}) \K} |B_1| \prod_{L=1}^{m} \left(R^2 \, \CCC + b_L \right)^{\gamma/2}.
\end{equation}
Finally, for any constant $c_0>0$,
by considering separately the cases where $m \lesssim 1$ and $m \gg 1$, one gets that  there exists a constant $D(c_0)$ only depending on $c_0$ (in particular, independent of $m$) such that $c_0^m e^{-\frac{1}{120} (2k_\ell -m \gamma -\frac{1}{2}) \K} \lesssim  D(c_0).$
Combining \eqref{equ-K1} and \eqref{equ-K2}, we conclude $\boldsymbol{\mathbf{O}_2} 
\lesssim \boldsymbol{\mathbf{O}_1} $. The proof of Theorem \ref{thm-voll} is then finished.\\

\section{ Maximal Function on $\HH$ }\label{sec-app}

As in \cite{Li09}, it is enough to establish the following uniform estimates:
\begin{equation}\label{equ-HDS} 
\big|B_{\mathbb{B}}( o , r)\big|^{-1}\chi_{B_{\mathbb{B}}(o,r)}(g)\lesssim n\left( \frac{3}{2 \,\CCC } \right)^{\frac{m}{2}} \fint_{0}^{ \frac{  r }{\sqrt{n}}}   {P}_{h} (g) \, d h, \quad a.e. \ g, \  \forall \, r>0,
\end{equation}
where $P_h \ (h>0)$ denotes the convolution kernel of $e^{-h \sqrt{-\Delta_{\mathbb{B}}}}$.  With the uniform volume estimates in hand, it suffices to establish uniform asymptotic behaviour of $P_h$, see Proposition \ref{prop-poi} below. To do this, we adapt the method used in \cite{BLZ25}.

\subsection{Poisson Kernel}\label{ssec-ker}

Set
\begin{align}\label{equ-cj}
\ N:=n+m+\frac{1}{2}, \quad \mathfrak{c}_j := \frac{k_j}{N}, \quad  j = 1,\cdots, \ell-1, \quad \mathfrak{c}_{\ell} := 1-\sum_{j=1}^{\ell-1} \mathfrak{c}_j.
\end{align}

Using the subordinate formula, the Poisson kernel $P_h \ (h>0)$  is given by: 
\begin{equation}\label{equ-poisson}
P_h(g) = \frac{2^m \Gamma(N)}{\pi^N} h \int_{\R^m} Q_h(g;\lambda)\,d\lambda,
\end{equation}
where 
\begin{align*}
Q_h(g;\lambda) &= \mathbf{V}(\lambda) \left(\Phi(g;\lambda)+h^2\right)^{-N}= \left(\frac{\sinh|\lambda|}{|\lambda|}\right)^{m+\frac{1}{2}} S_h(g;\lambda)^{-N}, \\
S_h(g;\lambda) &= \left(\Phi(g;\lambda)+h^2\right) \prod_{j=1}^{\ell} 
\left(\frac{\sinh(a_j |\lambda|)}{a_j |\lambda|}\right)^{c_j}, \quad  h > 0.
\end{align*}

We first introduce some notation which looks a little complicated due to the lack of symmetry.
For $0 \le r < \pi$, set in the sequel (cf. \eqref{EIa})
\begin{gather}\label{equ-f'}
\Lambda (r):= \prod_{j=1}^{\ell} \left(\frac{\sin(a_j r)}{a_j r}\right)^{\mathfrak{c}_j}, \quad  \varphi(r) := \frac{d}{d r} \ln{\Lambda (r)} = -r \sumj \sum_{k=1}^{+\infty} \frac{2a_j^2 \mathfrak{c}_j}{ (k \pi)^2 - (a_j r)^2}. \\
\CG(r) := 1 + r \, \varphi(r) = \sumj \mathfrak{c}_j \, (a_j r) \, \cot{(a_j r)}, \quad u(r) := - \frac{\varphi(r)}{r \, \CG(r)}. \label{fCG}
\end{gather}
Moreover, for given $x \in \R^{2n}$ with $x_{(\ell)} \neq 0$ and $h > 0$, set for $0 \le r < \pi$ and $\lambda \in B_{\R^m}(0, \pi)$:
\begin{align}\label{fTA}
\begin{split}
     & \Tg(r)  := \sumj \left( 
	u(r) \left( \frac{a_j r}{\sin{(a_j r)}} \right)^2 + \frac{a_j \mu(a_j r)}{r}  \right)  \, |\xj|^2 + u(r) \, h^2,  \\
   & \TA (\lambda) := \Tg(|\lambda|) + 4 u( |\lambda| ) \, \lambda^T \mathbb{A} \lambda, \quad \TB (\lambda):= \frac{\Tg'(|\lambda|) + 4  u'(|\lambda|) \, \lal }{|\lambda|}.
\end{split}
\end{align}

Inspired by Theorem \ref{thm-Fx}, let us begin with the maximizer of the following crucial function defined on $B_{\R^m} (0,\pi) $ for given $g$ and $h > 0$:
\begin{align}\label{equ-sS}
\begin{split}
\sS(\lambda) &\equiv \sS_\mathbb{A}(g,h;\lambda) := S_h(g; \mathrm{i} \lambda)  = \Lambda(|\lambda|) \left( \phi(g;\lambda)+h^2 \right), 
\end{split}
\end{align}

Notice that $\sS$ has the form $f_1(|\lambda|)(f_2(\lambda) + 4 t \cdot \lambda)$,  the following simple observation will be used:

\begin{lemma}\label{lem-nabla}
Let $f_1, f_2 \in C^\infty(B_{\R^m}(0,\pi))$ and $F_t( \lambda) := f_1(\lambda) \left( 
f_2(\lambda) + 4 t\cdot \lambda \right).$  If $\lambda_0$ is a critical point of $F_t$ such that $f_1(\lambda_0) \ne 0$ and $f_1(\lambda_0) + \lambda_0 \cdot \nabla f_1 (\lambda_0) \ne 0$, then
\begin{equation}\label{equ-solvet}
4 t = - \nabla f_2(\lambda_0) - \nabla f_1(\lambda_0) \,  \frac{f_2(\lambda_0) - \lambda_0 \cdot \nabla f_2(\lambda_0)}{ f_1(\lambda_0) + \lambda_0 \cdot \nabla f_1(\lambda_0)}.
\end{equation}
Consequently, 
\begin{equation}\label{equ-frakS}
F_t(\lambda_0) = f_1(\lambda_0)^2 \, \frac{f_2(\lambda_0) - \lambda_0 \cdot \nabla f_2(\lambda_0)}{ f_1(\lambda_0) + \lambda_0 \cdot \nabla f_1(\lambda_0)}.
\end{equation}
\end{lemma}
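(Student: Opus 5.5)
This is a direct computation from the critical point equation. We expand $\nabla F_t(\lambda_0)=0$, pair it with $\lambda_0$ to determine the scalar $f_2(\lambda_0)+4t\cdot\lambda_0$, and then substitute back twice.

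\textbf{Step 1: the gradient equation.} By the product rule,
\[
\nabla F_t(\lambda) = \nabla f_1(\lambda)\,\big(f_2(\lambda)+4t\cdot\lambda\big) + f_1(\lambda)\,\big(\nabla f_2(\lambda)+4t\big).
\]
Set $s := f_2(\lambda_0)+4t\cdot\lambda_0$, so that $F_t(\lambda_0)=f_1(\lambda_0)\,s$. The condition $\nabla F_t(\lambda_0)=0$ reads
\[
f_1(\lambda_0)\,\big(4t+\nabla f_2(\lambda_0)\big) = -s\,\nabla f_1(\lambda_0). \qquad (\ast)
\]

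\textbf{Step 2: determine $s$.} Take the inner product of $(\ast)$ with $\lambda_0$ and use $4t\cdot\lambda_0 = s - f_2(\lambda_0)$. This gives
\[
f_1(\lambda_0)\,\big(s - f_2(\lambda_0) + \lambda_0\cdot\nabla f_2(\lambda_0)\big) = -s\,\lambda_0\cdot\nabla f_1(\lambda_0).
\]
Rearranging,
\[
s\,\big(f_1(\lambda_0)+\lambda_0\cdot\nabla f_1(\lambda_0)\big) = f_1(\lambda_0)\,\big(f_2(\lambda_0)-\lambda_0\cdot\nabla f_2(\lambda_0)\big).
\]
By hypothesis $f_1(\lambda_0)+\lambda_0\cdot\nabla f_1(\lambda_0)\neq 0$, so we may divide:
\[
s = f_1(\lambda_0)\,\frac{f_2(\lambda_0)-\lambda_0\cdot\nabla f_2(\lambda_0)}{f_1(\lambda_0)+\lambda_0\cdot\nabla f_1(\lambda_0)}.
\]

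\textbf{Step 3: conclude both formulas.} Since $f_1(\lambda_0)\neq0$, we can divide $(\ast)$ by $f_1(\lambda_0)$ to get $4t = -\nabla f_2(\lambda_0) - \dfrac{s}{f_1(\lambda_0)}\,\nabla f_1(\lambda_0)$. Inserting the value of $s/f_1(\lambda_0)$ from Step 2 yields \eqref{equ-solvet}. Finally, $F_t(\lambda_0)=f_1(\lambda_0)\,s$, and substituting $s$ gives \eqref{equ-frakS}.

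The only delicate point is in Step 2: the self-referential appearance of $t$ inside $s$ must be resolved by pairing with $\lambda_0$, and this is exactly where the second nondegeneracy hypothesis is used. Everything else is routine substitution.
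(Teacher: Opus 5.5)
Your proposal is correct and follows essentially the same route as the paper: pair the critical-point equation $\nabla F_t(\lambda_0)=0$ with $\lambda_0$ to solve for the scalar (the paper solves for $t\cdot\lambda_0$, you solve for the equivalent $s=f_2(\lambda_0)+4t\cdot\lambda_0$), then substitute back into the gradient equation and into $F_t(\lambda_0)=f_1(\lambda_0)\,s$. Your algebra checks out, and you correctly identify that $f_1(\lambda_0)+\lambda_0\cdot\nabla f_1(\lambda_0)\neq 0$ is needed to solve for $s$ and $f_1(\lambda_0)\neq 0$ to isolate $4t$.
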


\begin{proof}
The equation $\nabla F_t (\lambda_0) =0$ gives
\begin{equation}\label{equ-cri}
\nabla f_1(\lambda_0) \left( f_2(\lambda_0) + 4 t \cdot \lambda_0 \right) + f_1(\lambda_0) \left( \nabla f_2(\lambda_0) + 4 t \right) =0.
\end{equation}
 By taking the inner product  with $\lambda_0$ on both sides of the last equality, we yield
\begin{align} \label{SE1}
  \Big( f_1(\lambda_0) + \lambda_0 \cdot \nabla f_1 (\lambda_0) \Big) \, 4 t \cdot \lambda_0 + \Big( \lambda_0 \cdot \nabla f_1(\lambda_0) \, f_2(\lambda_0) +  \lambda_0 \cdot \nabla f_2(\lambda_0) \, f_1(\lambda_0) \Big) = 0, 
\end{align}
which allows us to determine $t \cdot \lambda_0$. Substituting it in \eqref{equ-cri}, we get the desired results.
\end{proof}

\subsection{The Maximizer of $\sS$}\label{sec-max}
 The following claim can be easily proved in a more general setting that $(\mathbb{G}, \Delta)$ admit GM-property, by means of the operator convexity. However, we will provide an elementary proof, where we determine the maximizer by means of a diffeomorphism $\MC$ (see \eqref{equ-MC} below). This map is also useful in estimating the Poisson kernel; see Section \ref{sec-hess}.

\begin{prop}\label{prop-max}
The function $\sS$ possesses a unique maximizer in $B_{\R^m}(0,\pi)$. 
\end{prop}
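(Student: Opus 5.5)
The plan is to show that every critical point of $\sS$ in $B_{\R^m}(0,\pi)$ is a strict local maximum and that $\sS$ attains an interior maximum. Since $B_{\R^m}(0,\pi)$ is connected, a function all of whose critical points are nondegenerate local maxima has at most one of them; I would run this step as a mountain-pass argument, and as a backup describe the critical points as the preimages of $t$ under a map $\MC$ to be shown injective. Recall $\sS(\lambda)=\Lambda(|\lambda|)\big(\phi(g;\lambda)+h^2\big)$ with $a_\ell=1$.

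\textbf{Existence.} As $|\lambda|\to\pi$ we have $\Lambda(|\lambda|)\to 0$, because $\mathfrak{c}_\ell>0$ and $\sin(a_\ell r)\to 0$. Inside the bracket, $(a_\ell|\lambda|)\cot(a_\ell|\lambda|)|x_{(\ell)}|^2\to-\infty$ like $-\pi^2|x_{(\ell)}|^2/(\pi-|\lambda|)$. The product therefore behaves like $-(\pi-|\lambda|)^{\mathfrak{c}_\ell-1}\to-\infty$, using $x_{(\ell)}\ne 0$ and $\mathfrak{c}_\ell<1$. Moreover $\sS(0)=|x|^2+h^2>0$. Hence $\sS$ tends to $-\infty$ at the boundary and attains its supremum at an interior point, which is a critical point.

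\textbf{Parametrizing critical points.} Apply Lemma \ref{lem-nabla} with $f_1(\lambda)=\Lambda(|\lambda|)$ and $f_2(\lambda)=\sum_j (a_j|\lambda|)\cot(a_j|\lambda|)|x_{(j)}|^2-4\lambda^T\A\lambda+h^2$. We have $f_1+\lambda\cdot\nabla f_1=\Lambda(r)\,\CG(r)$ with $r=|\lambda|$. On the set where $\CG\ne0$, formula \eqref{equ-solvet} expresses $t$ as an explicit function of $\lambda$:
\[
4t=\MC(\lambda):=-\nabla f_2(\lambda)-\nabla f_1(\lambda)\,\frac{f_2(\lambda)-\lambda\cdot\nabla f_2(\lambda)}{\Lambda(|\lambda|)\,\CG(|\lambda|)}.
\]
Simplifying this with $\varphi$, $\mu$ and $u$ should give
\[
\MC(\lambda)=\lambda\,\TA(\lambda)+8\A\lambda,
\]
or a similar expression. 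The critical points of $\sS$ for the given $t$ are then exactly the solutions of $\MC(\lambda)=4t$ (together with any critical points where $\CG=0$, which must be handled separately).

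\textbf{Local maxima and uniqueness.} At a critical point $\lambda_0$, compute $\mathrm{Hess}_\lambda\sS(\lambda_0)$ and use \eqref{equ-cri} to eliminate $t$. The Hessian should factor as $-\Lambda(|\lambda_0|)$ times a matrix related to $D\MC(\lambda_0)$, of the form $\TA\,\mathbb{I}+8\A+\TB\,\lambda\lambda^T$ up to symmetrization. The goal is to show this matrix is positive definite, using $u>0$, $u'\ge0$, $\mu>0$, and the monotonicity of $\mu$ and $\widetilde\mu$ from \eqref{MU}. This mirrors the way Theorem \ref{thm-Fx} shows $\Fx$ is a diffeomorphism. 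Positive definiteness makes every critical point a nondegenerate local maximum. If two such maxima existed, the mountain-pass lemma would produce a third critical point that is not a local maximum, since $\sS\to-\infty$ at the boundary gives the required compactness. That contradiction proves uniqueness.

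\textbf{Main obstacle.} The hardest step is sign control. One needs $u(r)>0$, which is equivalent to $\varphi(r)<0$ together with $\CG(r)>0$. However, $\CG(r)=\sum_j\mathfrak{c}_j(a_jr)\cot(a_jr)$ may become negative for $r$ close to $\pi$, so the domain of $\MC$ can be smaller than $B_{\R^m}(0,\pi)$. I would first try to show directly that critical points can only occur where $\CG>0$. By \eqref{SE1}, at a critical point $4t\cdot\lambda_0\,\Lambda\,\CG$ is a combination of terms of known sign, and this should rule out $\CG\le0$. After that, the monotonicity of $u$ and of the coefficient of $|x_{(j)}|^2$ in $\Tg$ should follow from the partial-fraction expansions in \eqref{MU} and \eqref{equ-f'}. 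Since these sums have positive terms, their derivatives should have definite signs.
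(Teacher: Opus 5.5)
Your uniqueness argument relies on the claim that every critical point of $\sS$ lies in $\{\CG>0\}$ and is a nondegenerate local maximum. That claim is false. Identity \eqref{SE1} only excludes $\CG(|\lambda_0|)=0$. Once $\CG\neq 0$, \eqref{equ-frakS} gives, at any critical point $\lambda_0$ with $\rho_0=|\lambda_0|$,
\[
\sS(\lambda_0)=\frac{\Lambda(\rho_0)}{\CG(\rho_0)}\Big(h^2+4\lambda_0^T\A\lambda_0+\sum_{j=1}^{\ell}\Big(\frac{a_j\rho_0}{\sin(a_j\rho_0)}\Big)^2|x_{(j)}|^2\Big).
\]
So $\CG(\rho_0)$ has the sign of $\sS(\lambda_0)$, and critical points with negative value do occur.

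For a concrete case, take $m=1$, $\ell=2$, $k_1=1$, $k_2$ large (so $\mathfrak{c}_1=1/N$ is small), $\A=0$, $t>0$, and $h$, $x_{(1)}$ small. On compact subsets of $(-\pi,\pi)$, $\sS$ is then $C^2$-close to $|x_{(2)}|^2\cos\lambda+4t\sin\lambda=R\cos(\lambda-\lambda_*)$ with $\lambda_*\in(0,\pi/2)$. This profile has a nondegenerate minimum at $\lambda_*-\pi$, so $\sS$ has a local minimum near $\lambda_*-\pi$. Moreover $\sS\to-\infty$ at $-\pi$, while the limit profile is about $-|x_{(2)}|^2>-R$ near $-\pi$. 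Hence $\sS$ also has a second, non-global local maximum close to $-\pi$.

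Consequently the mountain-pass step produces no contradiction, because the third critical point can sit in $\{\CG<0\}$. Your backup also fails as stated: the critical-point equation for $t$ has several solutions on $\{\CG\neq0\}$, so the map is not injective there.

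The missing idea is to use global maximality instead of classifying all critical points. A maximizer $\tau$ satisfies $\sS(\tau)\ge\sS(0)=|x|^2+h^2>0$. The identity above then forces $\CG(|\tau|)>0$, i.e. $|\tau|<r_*$, and $t=\MC(\tau)$. On $B_{\R^m}(0,r_*)$ the map $\MC$ is a diffeomorphism onto $\R^m$ by Lemma \ref{Lm43}:
\begin{itemize}
\item $\det\mathbb{J}>0$, by the rank-one determinant formula together with $u$ and $\Tg$ being positive and increasing;
\item $\MC$ is proper, because $\Tg(r)\to\infty$ as $r\to r_*^-$;
\item Hadamard's theorem then gives the global diffeomorphism.
\end{itemize}
Hence $\tau=\MC^{-1}(t)$ is unique, and no Hessian information is needed.

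If you want to keep the mountain-pass route, you must localize it to $\{\sS>0\}$. This set is star-shaped about $0$, since $s\mapsto\phi(g;s\lambda)$ is concave. Then the mountain-pass level between two maxima is positive, and the resulting critical point lies in $\{\CG>0\}$, where your Hessian computation applies.
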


\begin{lemma} \label{Lm43}
Let $x \in \R^{2n}$ with $x_{(\ell)} \neq 0$ and $h > 0$. We have

(1) 
The unique zero $r_*$ of $\CG$ in $(0, \  \pi)$ satisfies $\frac{\pi}{2} \le r_* < \pi$. 
Moreover, the smooth functions $u$ and $\Tg$ are positive and strictly increasing on $[0, \ r_*)$.
 
(2) The following smooth map is a $C^{\infty}$-diffeomorphism from $B_{\R^{m}}(0, r_*)$ onto $\R^m$,
    \begin{align}\label{equ-MC}
     \MC(\lambda) := 4^{-1} \, \Big( \TA(\lambda) \, \mathbb{I}_m  + 8 \mathbb{A}   \Big) \lambda.
    \end{align} 
\end{lemma}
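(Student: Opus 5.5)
The plan is to treat the two parts separately. Part (1) is one–variable calculus built on the series representations \eqref{MU}, \eqref{EIa} and \eqref{equ-f'}. Part (2) combines a rank‑one determinant computation with Hadamard's global inverse function theorem.

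\textbf{Part (1).} Recall $a_\ell=1$, and note that all $\mathfrak{c}_j>0$: indeed $\mathfrak{c}_\ell=(k_\ell+m+\tfrac12)/N>0$, and the $\mathfrak{c}_j$ sum to $1$.
\begin{itemize}
\item Since $\frac{d}{ds}(s\cot s)=-\mu(s)$, we get $\CG'(r)=-\sum_j \mathfrak{c}_j a_j\,\mu(a_j r)<0$ on $(0,\pi)$, so $\CG$ is strictly decreasing.
\item For $r<\pi/2$ every $a_j r<\pi/2$, so every term $(a_jr)\cot(a_jr)$ is positive and $\CG(r)>0$. The same holds at $r=\pi/2$ unless $\CG(\pi/2)\le 0$, in which case the zero is exactly at or beyond $\pi/2$ anyway.
\item As $r\to\pi^-$, the $j=\ell$ term tends to $-\infty$, while the terms with $a_j<1$ stay bounded.
\end{itemize}
Hence $\CG$ has a unique zero $r_*\in[\pi/2,\pi)$.

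For $u$, the series in \eqref{equ-f'} gives
\[
-\varphi(r)/r=\sum_j\sum_k 2a_j^2\mathfrak{c}_j\big((k\pi)^2-(a_jr)^2\big)^{-1},
\]
which is positive and strictly increasing on $[0,\pi)$. On $[0,r_*)$, $1/\CG$ is positive and increasing. Therefore $u=(-\varphi/r)\cdot(1/\CG)$ is positive and strictly increasing, with $u(0)=\frac13\sum_j\mathfrak{c}_ja_j^2$. Moreover $u(r)\to+\infty$ as $r\to r_*^-$.

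For $\Tg$, each of its summands is positive and nondecreasing:
\begin{itemize}
\item $u(r)\big(\tfrac{a_jr}{\sin(a_jr)}\big)^2$ and $u(r)h^2$ are products of positive increasing functions;
\item $a_j\mu(a_jr)/r=2a_j^2\sum_k((k\pi)^2-(a_jr)^2)^{-2}$ is increasing.
\end{itemize}
Since $h>0$, $\Tg$ is positive and strictly increasing, and $\Tg(r)\ge u(r)h^2\to\infty$ as $r\to r_*$.

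\textbf{Part (2), local diffeomorphism.} Write $r=|\lambda|$. Componentwise,
\[
4(\MC)_l=\lambda_l\big(\Tg(r)+4u(r)\lambda^T\A\lambda+8b_l\big).
\]
I will compute $\nabla\TA=V\lambda+8u\,\A\lambda$ with $V:=\TB(\lambda)\ge 0$; this sign follows from part (1), since $\Tg'\ge0$ and $u'\ge0$. Then
\[
4D\MC=D+8u\,\lambda(\A\lambda)^T,\qquad D:=D_0+V\lambda\lambda^T,\qquad D_0:=\TA\,\mathbb{I}_m+8\A .
\]
Both $D_0$ and $D$ are symmetric positive definite. By the matrix determinant lemma,
\[
\det(4D\MC)=\det D\,\big(1+8u\,(\A\lambda)^TD^{-1}\lambda\big).
\]
Sherman–Morrison gives
\[
(\A\lambda)^TD^{-1}\lambda=\frac{(\A\lambda)^TD_0^{-1}\lambda}{1+V\lambda^TD_0^{-1}\lambda}\ge 0,
\]
because $\A$ and $D_0^{-1}$ are commuting diagonal positive semidefinite matrices. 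Hence $\det D\MC>0$ everywhere, and $\MC$ is a local diffeomorphism.

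\textbf{Part (2), properness and conclusion.} All the bracketed factors above are at least $\Tg(r)$. Hence $|\MC(\lambda)|\ge\frac14|\lambda|\,\Tg(|\lambda|)\to\infty$ as $|\lambda|\to r_*$, using $r_*>0$ and the blow‑up of $\Tg$ from part (1). So $\MC$ is proper from $B_{\R^m}(0,r_*)$ to $\R^m$. A proper local diffeomorphism between these two spaces is a covering map, and $\R^m$ is simply connected, so $\MC$ is a global diffeomorphism (Hadamard–Caccioppoli).

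The main obstacle I expect is the non‑symmetric rank‑one term $8u\,\lambda(\A\lambda)^T$ in $D\MC$. The Sherman–Morrison reduction to commuting diagonal matrices is the step that must be checked carefully.
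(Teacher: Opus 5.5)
Your proof is correct and follows the paper's argument. In part (1) you use the monotonicity of $\CG$ and the series for $\varphi$ and $\mu$; in part (2) you combine positivity of the Jacobian determinant, properness from $\Tg(r)\to\infty$ as $r\to r_*^-$, and Hadamard's global inverse function theorem.

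The only cosmetic difference is in the determinant step. The paper writes the Jacobian as $\mathbb{D}_1(\lambda)+\lambda(\mathbb{D}_2(\lambda)\lambda)^T$ with $\mathbb{D}_2=\TB\,\mathbb{I}_m+8u\mathbb{A}$. A single application of the matrix determinant lemma then gives $\det\mathbb{J}=(1+\lambda^T\mathbb{D}_2\mathbb{D}_1^{-1}\lambda)\det\mathbb{D}_1>0$, which is exactly what your two-step Sherman--Morrison computation produces.
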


\begin{proof}
    The claim $\frac{\pi}{2} \le r_* < \pi$ is clear by the fact that $\CG$ is strictly decreasing on $(0, \pi)$ (cf. \eqref{equ-f'} and \eqref{fCG}) together with $\CG(\tfrac{\pi}{2}) \ge 0$ and $\CG(\pi^-) = - \infty$. Moreover, \eqref{equ-f'} and \eqref{fCG} also imply that $u$ is positive and strictly increasing on $[0, \ r_*)$. The corresponding assertion for $\Tg$ follows from \eqref{EIa} and \eqref{MU}.

    Aim now at the second assertion. 
    A direct calculation implies that the Jacobian matrix of $\MC$ equals $\frac{1}{4} \mathbb{J}(\lambda)$, where
\begin{align}\label{equ-JJ}
\mathbb{J}(\lambda):= \mathbb{D}_1(\lambda)  + \lambda \lambda^T \, \mathbb{D}_2 (\lambda),  
\end{align}
with
\begin{equation}
    \mathbb{D}_1 (\lambda) = \TA(\lambda) \, \mathbb{I}_m + 8\mathbb{A},\ \ \mathbb{D}_2 (\lambda) = \TB(\lambda) \, \mathbb{I}_m + 8 u(|\lambda|) \mathbb{A}.
\end{equation}
Notice that $\mathbb{D}_1$ and $\mathbb{D}_2$ are real, diagonal matrices. Moreover $\mathbb{D}_2 (\lambda)$ is positive semi-definite, and the positive definite matrix  $\mathbb{D}_1 (\lambda) \ge \Tg(0) \, \mathbb{I}_m > 0$. 
From Schur's Lemma, we get that 
$$\det \mathbb{J}(\lambda) = \big( 1 + \lambda^T \mathbb{D}_2 (\lambda)  \mathbb{D}_1 (\lambda)^{-1} \lambda \big) \det  \mathbb{D}_1 (\lambda) > 0.$$
By applying Hadamard's theorem (cf. e.g. \cite[Theorem 6.2.8]{K02}), it remains to show that $\MC$ is proper, which is clear by the fact that $  \Tg(r) \rightarrow  +\infty$ as $r \rightarrow r_*^-$.
\end{proof}

\begin{proof}[Proof of Proposition \ref{prop-max}]
The existence of the maximizer is due to the facts that
$$
\sS(0)=|x|^2+ h^2 > 0 \quad \mbox{and} \quad \lim_{\ |\lambda|\rightarrow \pi^{-}} \sS(\lambda) = - \infty.$$ Moreover,
$
\mathop{\mbox{lim}}\limits_{\ r\rightarrow 0^{+}} \frac{d}{dr} \sS( r \hat{t} ) = 4|t|   >0,
$
so $0$ cannot be a maximizer. 

 Let $\tau$ be a maximizer, and $\rho := |\tau| \in (0, \pi)$. We first claim that $1 + \rho \frac{\Lambda'(\rho)}{\Lambda(\rho)} \neq 0$. Suppose the contrary.  
  Applying \eqref{SE1} with 
 \[
 f_1(\lambda) = \Lambda(\lambda), \quad f_2 (\lambda) = \phi(g; \lambda) - 4 t \cdot \lambda+h^2,\quad \lambda_0 = \tau,
 \]
it follows from the elementary equality $r \cot{r} - r\frac{d}{d r} (r \cot{r}) = (\frac{r}{\sin{r}})^2$ that 
\begin{equation*}
\sumj |\xj|^2 \left( \frac{a_j \rho}{\sin{(a_j \rho)}} \right)^2 + 4 \tau^T \mathbb{A} \tau + h^2  =0,
\end{equation*}
which leads to a contradiction.

 Moreover it follows from \eqref{equ-frakS} that
\begin{align}\label{equ-Stau}
	( 0 < ) \, \sS(\tau ) 
	& = \frac{\Lambda(\rho)}{1+ \rho \, \varphi(\rho)} \left( h^2 + 4\tau^T \mathbb{A} \tau + \sum_{j = 1}^{\ell} 
	\left( \frac{a_j \rho}{\sin {(a_j \rho)}} \right)^2 \, |x_{(j)}|^2
	\right).
\end{align}
Hence $1 + \rho \, \varphi(\rho) >0$. Thus by \eqref{equ-solvet}, a direct computation shows that $t = \MC(\tau)$. The desired conclusion then follows from Lemma \ref{Lm43}.
\end{proof}

Let $h > 0$. In what follows, we always assume that 
\begin{align}
    g \in \GG := \left\{  (x,t)\in \HH : x_\iota,t_l\ne 0, \ \forall \, \iota, l \right\}.
\end{align} 
Let $\tau_h = \tau_h (g)$ denote the unique maximizer of $\sS$ in $B_{\R^m}(0,\pi)$. Then we have
\begin{gather}
    t = \MC(\tau_h) = 4^{-1} \, \Big( \TA(|\tau_h|) \, \mathbb{I}_m  + 8 \mathbb{A}   \Big) \tau_h, \quad 0 < \rho_h : = |\tau_h| < r_*, \label{equ-ttau} \\
   \sS(\tau_h ) = \frac{\Lambda(\rho_h)}{1 + \rho_h \, \varphi(\rho_h)} \left( h^2 + 4\tau_h^T \mathbb{A} \tau_h +  \sumj
	\left( \frac{a_j \rho_h}{\sin {(a_j \rho_h)}} \right)^2 \, |x_{(j)}|^2
	\right). \label{NST}
\end{gather} 
We further provide some elementary properties of $\tau_h$.

\begin{lemma}\label{lem-he}
The following statements hold uniformly:
\begin{equation*}
\ \ \    (1) \ \phi(g;r \hat{\tau_h}) \ \mbox{is increasing in} \ r\in [0,\rho_h]; \ \ \ \, \; \, (2) \ 	 d_{\mathbb{B}}(g)^2 \lesssim \sS(\tau_h ) \le \left(d_{\mathbb{B}}(g)^2 + h^2 \right) \Lambda(\rho_h); 
\end{equation*}
\begin{equation*}
(3) \, (\pi - \rho_h) \sim 1 \mbox{\, if Assumption \ref{assumption} holds;} \quad \ (4) \ t_l^2 \lesssim (b_l + \sS(\tau_h))\sS(\tau_h), \ 1\le l \le m.\,
\end{equation*}
\end{lemma}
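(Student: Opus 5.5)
The plan is to treat the four items separately. Throughout I use the relation $t=\MC(\tau_h)$ from \eqref{equ-ttau}, the closed form \eqref{NST}, the normalization $a_\ell=1$, and the fact that $\mathrm{S}(\tau_h)=\max_{B_{\R^m}(0,\pi)}\mathrm{S}$ (Proposition~\ref{prop-max}).

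\textbf{(1).} Fix $e=\hat{\tau}_h$ and differentiate:
\[
\tfrac{d}{dr}\phi(g;re)=-\sumj a_j\mu(a_jr)|\xj|^2+4t\cdot e-8r\,e^T\A e .
\]
Substituting $4t\cdot e=\rho_h\TA(\tau_h)+8\rho_h e^T\A e$ from \eqref{equ-ttau} gives
\[
\tfrac{d}{dr}\phi(g;re)=\rho_h\TA(\tau_h)-\sumj a_j\mu(a_jr)|\xj|^2+8(\rho_h-r)e^T\A e .
\]
By \eqref{fTA}, and since $u\ge0$ on $[0,r_*)$ (Lemma~\ref{Lm43}), we have $\rho_h\TA(\tau_h)\ge\sumj a_j\mu(a_j\rho_h)|\xj|^2$. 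Since $\mu$ is increasing, this dominates $\sumj a_j\mu(a_jr)|\xj|^2$ for $r\le\rho_h$. The last term is nonnegative, so the derivative is $\ge0$ on $[0,\rho_h]$.

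\textbf{(2).} For the upper bound, note that $\rho_h<\pi$, so $\tau_h\in\Omega_*$. Then $\phi(g;\tau_h)\le d_{\mathbb B}(g)^2$ by \eqref{equ-dB}, and hence $\mathrm{S}(\tau_h)=\Lambda(\rho_h)(\phi(g;\tau_h)+h^2)\le\Lambda(\rho_h)(d_{\mathbb B}^2+h^2)$.

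For the lower bound, let $\theta\in\Omega_*$ with $\phi(g;\theta)\ge\frac12 d_{\mathbb B}(g)^2$. The function $\lambda\mapsto\phi(g;\lambda)$ is concave, because $r\mapsto r\cot r$ is even, concave and decreasing on $[0,\pi)$, and the remaining terms are linear or concave. Therefore
\[
\phi(g;\theta/2)\ge\tfrac12\big(\phi(g;0)+\phi(g;\theta)\big)\ge\tfrac14 d_{\mathbb B}^2,
\]
using $\phi(g;0)=|x|^2\ge0$. Moreover $|\theta|/2<\pi/2$ and $\sum\mathfrak c_j=1$, and $\sin s/s\ge 2/\pi$ on $[0,\pi/2]$, so $\Lambda(|\theta|/2)\ge 2/\pi$. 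Hence $\mathrm{S}(\tau_h)\ge\mathrm{S}(\theta/2)\ge\frac{1}{2\pi}d_{\mathbb B}^2$.

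\textbf{(3).} Since $0<\rho_h<r_*$, it suffices to show $\pi-r_*\gtrsim1$. By \eqref{equ-m}, $m\lesssim\log n$, so $N\sim n$. Hence $\mathfrak c_\ell\ge k_\ell/N\gtrsim\eta_0$. Every term with $j<\ell$ satisfies $(a_jr)\cot(a_jr)\le1$, so
\[
\CG(r)\le(1-\mathfrak c_\ell)+\mathfrak c_\ell\, r\cot r .
\]
The right-hand side is negative as soon as $r\cot r<-(1-\mathfrak c_\ell)/\mathfrak c_\ell$. This holds for $\pi-r\le c(\eta_0)$, so $r_*\le\pi-c(\eta_0)$.

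\textbf{(4).} This is the step I expect to be hardest, because it is claimed without Assumption~\ref{assumption}, so $\rho_h$ may approach $\pi$. From \eqref{equ-ttau},
\[
4t_l=\big(\TA(\tau_h)+8b_l\big)(\tau_h)_l .
\]
I would split $\TA$ via \eqref{fTA} into two parts:
\[
u(\rho_h)\cdot\mathcal{Z},\qquad \mathcal{Z}:=h^2+4\tau_h^T\A\tau_h+\sumj\big(\tfrac{a_j\rho_h}{\sin(a_j\rho_h)}\big)^2|\xj|^2,
\]
and $\sumj a_j\mu(a_j\rho_h)|\xj|^2/\rho_h$. By \eqref{NST}, $\mathcal Z=\CG(\rho_h)\,\mathrm{S}(\tau_h)/\Lambda(\rho_h)$, so $u\,\mathcal Z=-\varphi(\rho_h)\mathrm{S}(\tau_h)/(\rho_h\Lambda(\rho_h))$. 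This step relies on $\CG(\rho_h)>0$, which holds because $\rho_h<r_*$.

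The terms then need to be paired with powers of $\tau_h$ as follows.
\begin{itemize}
\item $b_l(\tau_h)_l^2\le\tau_h^T\A\tau_h\le\mathcal Z/4$.
\item $\mu(a_j\rho_h)\lesssim\big(\tfrac{a_j\rho_h}{\sin(a_j\rho_h)}\big)^2$ by the elementary bound behind \eqref{equ-sind}, so the $\mu$-part is $\lesssim\mathcal Z/\rho_h$.
\end{itemize}
This reduces (4) to proving $|\varphi|\rho_h^{-1}|(\tau_h)_l|\,\mathcal Z\lesssim\mathrm{S}(\tau_h)$ together with $\mathcal Z\lesssim\mathrm{S}(\tau_h)$, both uniformly in $\rho_h$.

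Near $r_*$ the factors $\varphi$ and $1/\Lambda$ blow up. There I would argue directly from maximality instead of through \eqref{NST}. Since $\nabla\mathrm{S}(\tau_h)=0$, comparing $\mathrm{S}(\tau_h)$ with $\mathrm{S}(\tau_h-se_l)$ for $s\sim\min(1,\mathrm{S}(\tau_h)^{1/2}(b_l+\mathrm{S}(\tau_h))^{-1/2})$ controls $t_l s$ by $\mathrm{S}(\tau_h)$, which gives $t_l^2\lesssim(b_l+\mathrm{S})\mathrm{S}$. Checking that the resulting constants are uniform in $\rho_h$ is the main point to verify.
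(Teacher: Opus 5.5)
\textbf{Items (1)--(3).} These are correct. For (1) and (3) you follow the paper: you check the sign of $\frac{d}{dr}\phi(g;r\hat{\tau_h})$ on all of $[0,\rho_h]$ instead of using concavity plus the endpoint, and your bound $\CG(r)\le 1-\mathfrak{c}_\ell+\mathfrak{c}_\ell\, r\cot r$ is the paper's bound on $r_*\cot r_*$ in another form. For the lower bound in (2) you take a different, more elementary route than the paper, which tests $\sS$ at $r_0\theta$ with $t=\Fx(\theta)$ and uses \eqref{equ-dBB}. You use concavity of $\lambda\mapsto\phi(g;\lambda)$ and halve a near-maximizer. This is valid and gives an explicit constant.

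\textbf{The gap is (4).} The paper defers this item to \cite[Lemma 5.4 (c)]{BLZ25}, but your version does not prove it, and the obstruction you name does not exist. On $[0,r_*]$ neither $\varphi$ nor $1/\Lambda$ blows up, uniformly and without Assumption \ref{assumption}. By \eqref{equ-f'}, $-\varphi$ is positive and increasing on $[0,\pi)$, and $\CG(r_*)=0$ gives $-\varphi(r_*)=1/r_*\le 2/\pi$, so $|\varphi(\rho_h)|\le 2/\pi$. Moreover
\[
\ln\Lambda(\rho_h)=\int_0^{\rho_h}\varphi(s)\,ds\ \ge\ \rho_h\,\varphi(\rho_h)=\CG(\rho_h)-1>-1 ,
\]
so $e^{-1}<\Lambda(\rho_h)\le 1$. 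This is the fact ``$\Lambda\sim 1$ on $[0,r_*]$'' used in the proof of Proposition \ref{prop-Hess}. The only factor that degenerates as $\rho_h\to r_*$ is $u=-\varphi/(r\,\CG)$, and your identity $u\,\mathcal{Z}=-\varphi(\rho_h)\,\sS(\tau_h)/(\rho_h\Lambda(\rho_h))$ already cancels it. Your displayed reduction $|\varphi|\rho_h^{-1}|(\tau_h)_l|\,\mathcal{Z}\lesssim\sS(\tau_h)$ silently drops this $\CG^{-1}$. The correct requirement is $|\varphi(\rho_h)|\,|(\tau_h)_l|\lesssim \rho_h\Lambda(\rho_h)$, which follows at once from the two bounds above.

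With these bounds your decomposition closes. First, $\mathcal{Z}\le e\,\sS(\tau_h)$, so both the $u\mathcal{Z}$-part and the $\mu$-part of $\TA(\tau_h)(\tau_h)_l$ are $O(\sS(\tau_h))$. Second, $b_l^2(\tau_h)_l^2\le b_l\,\tau_h^T\A\tau_h\le b_l\mathcal{Z}/4\lesssim b_l\,\sS(\tau_h)$. Together these give $t_l^2\lesssim (b_l+\sS(\tau_h))\,\sS(\tau_h)$.

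\textbf{Your fallback.} Comparing $\sS(\tau_h)$ with $\sS(\tau_h-se_l)$ is not a substitute as it stands. Since $\nabla\sS(\tau_h)=0$, the first-order contribution $4\Lambda t_l s$ is exactly cancelled by the first-order variation of the other factors. So this comparison does not isolate $t_l$ unless you control $\Lambda$, the $\cot$ terms and $\mu$ along the segment, which is precisely what you wanted to avoid.

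A maximality argument does work if you perturb around the origin instead. For $0<s\le\pi/2$ one has $(a_js)\cot(a_js)\ge 0$ and $\Lambda(s)\ge 2/\pi$, hence
\[
\sS(\tau_h)\ \ge\ \sS\big(s\,\mathrm{sgn}(t_l)\,e_l\big)\ \ge\ \tfrac{2}{\pi}\big(4s|t_l|-4b_l s^2\big)\qquad\text{if } |t_l|\ge b_l s .
\]
Choosing $s=\min\{\pi/2,\,|t_l|/(2b_l)\}$ yields either $t_l^2\le\frac{\pi}{2}\,b_l\,\sS(\tau_h)$ or $|t_l|\le\frac12\sS(\tau_h)$. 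This involves no dependence on $\rho_h$ at all.
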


\begin{proof}
We first prove (1).
Notice that
\[
\phi(g; r  \hat{\tau_h} \,)   =   \sumj (a_j r) \cot{(a_j r)} \, |\xj|^2 +  \frac{4 r }{\rho_h} t\cdot \tau_h -  \frac{4r^2}{\rho_h^2} \tau_h^T \mathbb{A}\tau_h.
\]
 From the definition of $\mu$ in \eqref{MU}, it holds that $\frac{d^2}{dr^2} \phi(g; r \tau_h) \le 0$. Hence we only need to show  $ \frac{d}{dr}  \phi(g;r\hat{\tau_h})|_{r=\rho_h} \ge 0$, which can be checked directly by \eqref{equ-ttau} and \eqref{fTA}.

 For (2), the upper bound comes from  \eqref{equ-dB} and \eqref{equ-sS}. For the lower bound, since $\tau_h$ is the maximizer, for $\theta$ as in \eqref{equ-Atheta} and $r_0 > 0$ sufficiently small, we have
\begin{equation*}
\sS(\tau_h) \ge \sS(r_0 \theta) \gtrsim  \theta^T \mathbb{A} \theta + \sumj a_j |\theta| \Big(  r_0 \cot(a_j r_0 |\theta|) +  \mu(a_j |\theta|) \Big)  |\xj|^2  \sim d_\mathbb{B}(g)^2,
\end{equation*}
 by the facts that $r \cot r \sim 1+ r \mu(r) \sim (\frac{r}{\sin r})^2 \sim 1$ near $r=0$, and $1+ r \mu(r) \sim (\frac{r}{\sin r})^2$ for $0<r<\pi$.

To prove (3),  we show $(\pi - r_*) \, \sim \, 1$, where $r_*$ is as in Lemma \ref{Lm43}.  Since Assumption \ref{assumption} implies $\mathfrak{c}_\ell \sim 1$, it suffices to notice that
\begin{equation*}
\, r_* \cot r_* = - \frac{1}{ \mathfrak{c}_\ell} \sum_{j=1}^{\ell-1} \mathfrak{c}_j \,  (a_j r_*) \cot(a_j r_*) \, \ge - \frac{1}{ \mathfrak{c}_\ell } \sum_{j=1}^{\ell-1} \mathfrak{c}_j = \frac{\mathfrak{c}_\ell -1 }{\mathfrak{c}_\ell }. 
\end{equation*}

The proof of (4) is similar to that of \cite[Lemma 5.4 (c)]{BLZ25} and is omitted here.
\end{proof}

\begin{remark}\label{remark-pi}
 Lemma \ref{lem-he} (3) says that $\tau_h$ is bounded away from $\partial B_{\R^m} 
(0,\pi)$. This property is crucial for establishing uniform asymptotic behavior of Poisson kernel; see Proposition \ref{prop-poi}. Recall that in the classical H-type groups, we have stronger result $ |\tau_h| < \frac{\pi}{2}$; see \cite[Lemma 5.4 ($a$)]{BLZ25}. Moreover, Assumption \ref{assumption} is in fact necessary. For example, take $\ell =2$, $0 < a_1 < \frac{1}{2} < 1 = a_2$.
Then $r_*\rightarrow \pi^{-}$ as $\mathfrak{c}_2\rightarrow 0^{+}$, since $\CG(r_*) = 0$ implies 
$
-\frac{\cot r_*}{\cot\left(a_1 \, r_*\right)} = a_1 \, \mathfrak{c}_1 \,
\mathfrak{c}_2^{-1}  \rightarrow + \infty.
$
By \eqref{equ-ttau}, we know $\rho_h \rightarrow r_*^{-}$ as $|t|\rightarrow +\infty$. Thus $\rho_h \rightarrow \pi^{-}$ as $|t|\rightarrow +\infty$ and $\mathfrak{c}_2 \rightarrow 0^{+}$.
\end{remark}

\subsection{The Hessian of $\sS$}\label{sec-hess}

Recall $\sS(\cdot) = \sS_\mathbb{A}(g,h\,; \cdot)$ in \eqref{equ-sS}.  We  compute  the Hessian matrix of $\sS$ at $\tau_h$. By Lemma \ref{Lm43}, it holds that: 
\begin{equation*}
\nabla_{\lambda} \,\sS_\mathbb{A}(x, \MC(\lambda), h \, ; \lambda) = 0, \quad \forall \,\lambda \in B_{\R^m}(0,r_*).
\end{equation*}
Let $\frac{\partial^2 \sS }{\partial \lambda \partial t} $ be the mixed Hessian and $\mathbb{J}$ be as in \eqref{equ-JJ}. The chain rule then gives that
\begin{align*}
\begin{split}
- & \mbox{Hess}_{\lambda}\, \sS_\mathbb{A} (x, \MC(\lambda), h \,; \lambda) 
= 4^{-1} \frac{\partial^2 \sS_\mathbb{A}}{\partial \lambda \partial t} (x, \MC(\lambda), h \, ; \lambda) \, \mathbb{J}(\lambda)\\
& =  \left( \Lambda(|\lambda|) \, \mathbb{I}_m + \lambda \lambda^T  \frac{\Lambda'(|\lambda|)}{|\lambda|} \right) \mathbb{J}(\lambda)  = \mathcal{F}_1(\lambda) \, \mathbb{I}_m  + 8\Lambda(|\lambda|) \mathbb{A}+ \mathcal{F}_2(\lambda) \lambda\lambda^T, 
\end{split}
\end{align*}
where  in the last equality we use $u(r) = -\frac{\varphi(r)}{r \, (1 + r \varphi(r))}$ (cf. \eqref{fCG}), and write 
\begin{align}
\mathcal{F}_1(\lambda) = \Lambda(|\lambda|) \, \TA(|\lambda|), \quad  \mathcal{F}_2(\lambda) = \Lambda(|\lambda|) \, \CG(|\lambda|) \, \TB(\lambda) + \frac{\Lambda'(|\lambda|)}{|\lambda|} \, \TA(\lambda).
\end{align}

The following proposition is the counterpart of \cite[eq. (5.18)-(5.19)]{BLZ25}:

\begin{prop}\label{prop-Hess}
Under Assumption \ref{assumption}, we have uniformly in $ 0 < h \le n^{-\frac{1}{2}} d_{\mathbb{B}} (g)$,
\[
- \mathrm{Hess}_{\lambda}\, \sS (\tau_h) \sim \sS(\tau_h) \, \mathbb{I}_m + \mathbb{A} \quad \mbox{and} \quad \det \left( - \mathrm{Hess}_{\lambda}\, \sS (\tau_h) \right) \lesssim \prod_{l=1}^m \big( d_{\mathbb{B}}(g)^2 + 8 b_l \big). 
\]
\end{prop}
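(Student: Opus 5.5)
Our starting point is the identity derived just above: at $\lambda=\tau_h$,
\[
-\mathrm{Hess}_\lambda \sS(\tau_h)=\mathcal{F}_1(\tau_h)\,\mathbb{I}_m+8\Lambda(\rho_h)\mathbb{A}+\mathcal{F}_2(\tau_h)\,\tau_h\tau_h^T .
\]
This matrix is symmetric, and $\mathbb{A}$ is diagonal and nonnegative. The two-sided bound will therefore follow once we show three scalar estimates uniformly in $0<h\le n^{-1/2}d_{\mathbb{B}}(g)$:
\begin{equation*}
\mathcal{F}_1(\tau_h)\sim \sS(\tau_h),\qquad \Lambda(\rho_h)\sim 1,\qquad |\mathcal{F}_2(\tau_h)|\,\rho_h^2\ll \mathcal{F}_1(\tau_h)
\end{equation*}
(or, failing the last, that the rank-one term is nonnegative and at most $\lesssim\mathcal{F}_1$). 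The determinant bound then comes from taking the product of the diagonal bounds $\sS(\tau_h)+b_l$, together with $\sS(\tau_h)\le (d_{\mathbb{B}}(g)^2+h^2)\Lambda(\rho_h)\lesssim d_{\mathbb{B}}(g)^2$ from Lemma \ref{lem-he}(2). The rank-one term changes the determinant only by the factor $1+\mathcal{F}_2\tau_h^T D^{-1}\tau_h$, where $D$ is the diagonal part, and this factor is $\lesssim 1$ once the third estimate holds.

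\textbf{Step 1 (size of $\Lambda$ and $\CG$).} By Lemma \ref{lem-he}(3), under Assumption \ref{assumption} we have $\rho_h\le \pi-c$. The factors $\frac{\sin(a_jr)}{a_jr}$ with $a_j<1$ are bounded below on $[0,\pi-c]$ once $a_j\rho_h$ stays away from $\pi$, and $a_\ell\rho_h=\rho_h$ does. Since $\sum_j\mathfrak{c}_j=1$, this gives $\Lambda(\rho_h)\sim 1$, and similarly $|\varphi|\lesssim 1$ and $u\lesssim 1$ near such $\rho_h$. We also have $\CG(\rho_h)>0$ and $\rho_h<r_*$.

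\textbf{Step 2 ($\mathcal{F}_1\sim\sS$).} Write $\TA(\tau_h)=\Tg(\rho_h)+4u(\rho_h)\tau_h^T\mathbb{A}\tau_h$. Compare it with \eqref{NST}:
\[
\sS(\tau_h)=\frac{\Lambda(\rho_h)}{\CG(\rho_h)}\Bigl(h^2+4\tau_h^T\mathbb{A}\tau_h+\textstyle\sum_j(\tfrac{a_j\rho_h}{\sin a_j\rho_h})^2|x_{(j)}|^2\Bigr).
\]
The coefficient of $|x_{(j)}|^2$ in $\Tg$ is $u\,(\frac{a_jr}{\sin a_jr})^2+a_j\mu(a_jr)/r$. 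We check termwise that this is $\sim \CG(r)^{-1}(\frac{a_jr}{\sin a_jr})^2$. For small $r$ both sides are $\sim 1$, since $u(0)>0$ and $\mu(s)/s\to 2/3$. For $r$ near $r_*$, $u\sim \CG^{-1}$ dominates.

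The delicate point is that $\CG(\rho_h)$ may be small, so everything must carry the common factor $\CG^{-1}$. For this we use $u=-\varphi/(r\CG)$ with $-\varphi/r\sim 1$, together with $a_j\mu(a_jr)/r\lesssim (\frac{a_jr}{\sin a_jr})^2\lesssim \CG^{-1}(\cdot)$ because $\CG\le 1$. The same argument handles the $h^2$ and $\mathbb{A}$ terms. This yields $\mathcal{F}_1\sim\sS(\tau_h)$.

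\textbf{Step 3 (the rank-one term; main obstacle).} Here we must control
\[
\mathcal{F}_2\rho_h^2=\Lambda\,\CG\,\rho_h\bigl(\Tg'+4u'\,\hat\tau_h^T\mathbb{A}\hat\tau_h\,\rho_h^2\bigr)+\rho_h\Lambda'\,\TA .
\]
The second piece is $\rho_h\varphi\,\mathcal{F}_1=(\CG-1)\mathcal{F}_1$, which lies in $(-\mathcal{F}_1,0]$. The first piece is nonnegative, by the monotonicity in Lemma \ref{Lm43}(1). So the whole rank-one coefficient satisfies $\mathcal{F}_1+\mathcal{F}_2\rho_h^2\ge \CG\,\mathcal{F}_1\ge 0$ along $\hat\tau_h$, which gives positive semidefiniteness.

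For the lower bound $\gtrsim\sS\,\mathbb{I}_m$ in the $\hat\tau_h$ direction, we must show that $\Lambda\,\CG\,\rho_h\Tg'$ compensates the loss $(1-\CG)\mathcal{F}_1$ when $\CG$ is small. We would do this by using $\frac{d}{dr}\CG^{-1}\sim \CG^{-2}$ near $r_*$, so that $\rho_h\Tg'\gtrsim \CG^{-1}\Tg$. For the upper bound we need $\rho_h\Tg'\lesssim \CG^{-1}\Tg$, uniformly in $n$, and this is where Assumption \ref{assumption} enters through $\pi-\rho_h\sim 1$: it keeps $\mu(a_jr)$, $\mu'$ and $\CG'$ bounded. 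We expect this comparison, $r\Tg'(r)\sim \CG(r)^{-1}\Tg(r)$ uniformly in the weights, to be the main difficulty. We would attempt it term by term in $j$ using the series in \eqref{MU} and \eqref{equ-f'}.
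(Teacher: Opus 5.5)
Your overall structure matches the paper's reduction: a diagonal part plus a rank-one part, with $\mathcal{F}_1\sim\sS$, $\Lambda\sim1$, and control of $\mathcal{F}_2\rho_h^2$. However, there is a genuine gap in two places.

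\textbf{The determinant bound must be uniform in $m$.} In this paper $\lesssim$ means a constant independent of $n,m,\mathbb{W},\mathbb{B}$. The $m$-dependence is later tracked exactly, through $(\frac{8\pi}{N}\sS)^{m/2}$ and $\det^{-1/2}$ in Proposition~\ref{prop-poi}, so a loss of $C^m$ is not allowed. Multiplying $m$ diagonal bounds $\mathcal{F}_1+8\Lambda b_l\le C(\sS(\tau_h)+b_l)$ and then using $\sS(\tau_h)\lesssim\DB(g)^2$ gives only $C^m\prod_l(\DB(g)^2+8b_l)$.

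What you need is the sharp bound $\mathcal{F}_1(\tau_h)\le\sS(\tau_h)$ with constant exactly $1$. Combine it with $\Lambda\le1$, $\sS(\tau_h)\le(\DB(g)^2+h^2)\Lambda(\rho_h)$ and $h^2\le\DB(g)^2/n$. Then each diagonal entry is at most $(1+\frac1n)(\DB(g)^2+8b_l)$, and $(1+\frac1n)^m\le e^{m/n}\le e^2$ by \eqref{equ-m}. The rank-one term costs only the single factor $1+\mathcal{F}_2\rho_h^2/\mathcal{F}_1\lesssim1$. This is exactly where the hypothesis $h\le n^{-1/2}\DB(g)$ enters, and it plays no role in your argument.

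Your termwise comparison in Step 2 gives only $\mathcal{F}_1\sim\sS$ with unspecified constants. The paper gets constant one in three moves. First it writes $\mathcal{F}_1=-\frac{\varphi}{\rho_h}\sS(\tau_h)+\frac{\Lambda}{\rho_h}\sum_ja_j\mu(a_j\rho_h)|x_{(j)}|^2$. Next it applies $\frac{\mu(r)}{r}(\frac{\sin r}{r})^2\le\frac23$, giving $\mathcal{F}_1\le(-\frac{\varphi}{\rho_h}+\frac23\CG)\sS$. Finally it proves $-\frac{\varphi(r)}{r}+\frac23\CG(r)\le1$ on $(0,r_*)$, which is equivalent to $(\frac23r^2-1)\CG(r)\le r^2-1$.

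\textbf{Step 3 is not carried out.} The decisive comparison is left as an expectation. The form you state, $r\Tg'(r)\sim\CG(r)^{-1}\Tg(r)$ uniformly, is false: $\Tg$ is even in $r$, so $r\Tg'(r)=O(r^2)$ as $r\to0$ while $\Tg(0)>0$. Only the upper bound, and the lower bound in the regime where $\CG$ is small, are true and needed. Your idea that $u'\gtrsim\CG^{-2}$ in that regime could be made to work. Also, the claim $u\lesssim1$ in Step 1 is wrong: $u=-\varphi/(\rho\CG)$ blows up as $\rho_h\to r_*$, which you yourself use in Step 2.

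The paper proves the stronger fact $\mathcal{F}_2(\tau_h)\ge0$, i.e.\ the positive piece absorbs all of the loss $(1-\CG)\mathcal{F}_1$. It uses the exact identities, at $\rho=\rho_h$,
\begin{align*}
u\varphi+\CG\,u' &= \frac{-\rho\varphi'+\varphi\,\CG}{\rho^2\,\CG},\\
\rho\big(\varphi\Tg+\CG\,\Tg'\big) &= \Big(\frac{\varphi^2-\varphi'}{\CG}-u\Big)\Big(h^2+\sum_{j=1}^{\ell}\Big(\frac{a_j\rho}{\sin(a_j\rho)}\Big)^2|x_{(j)}|^2\Big)\\
&\quad+\sum_{j=1}^{\ell}a_j^2|x_{(j)}|^2\Big(\mu'(a_j\rho)-\frac{\mu(a_j\rho)}{a_j\rho}\Big).
\end{align*}
These are combined with $0\le-\rho\varphi'+\varphi\CG\lesssim\rho$, which follows from the monotonicity of $-\varphi(r)/r$ and the boundedness of $\varphi'$. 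The other ingredient is $\mu'(s)-\mu(s)/s=s(\mu(s)/s)'\ge0$. Compared against \eqref{NST}, these give $0\le\mathcal{F}_2\rho_h^2\lesssim\sS(\tau_h)$, with the $\CG^{-1}$ factors matched exactly.
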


\begin{proof}
   By 
  \cite[Theorem~VI.7.1]{Bhatia97}  and Lemma \ref{lem-he} (2), it suffices to show that:
   \[
(1) \ \sS(\tau_h) \lesssim \mathcal{F}_1(\tau_h) \le \sS(\tau_h); \qquad (2) \ 0\le \mathcal{F}_2(\tau_h)|\tau_h|^2 \lesssim \sS(\tau_h).
\]
For convenience, we may abbreviate $f_1(\rho_h)=f_1$ for any function $f_1$. 

We prove (1) first.  From \eqref{NST}, it follows directly that
\begin{align*}
\mathcal{F}_1(\tau_h) =  \Lambda \, \TA =\Lambda \, \Tg + 4u \Lambda \tau_h^T \mathbb{A} \tau_h &= -\frac{\varphi }{\rho_h} \, \sS(\tau_h) + \frac{\Lambda}{\rho_h} \sumj a_j \mu(a_j \rho_h) \, |\xj|^2 \\
 &\ge  -\frac{\varphi }{\rho_h} \, \sS(\tau_h) \gtrsim  \sS(\tau_h), 
\end{align*}
 by the second equality in \eqref{equ-f'}.  
Moreover, the inequality $\frac{\mu(r)}{r} \left(\frac{\sin r}{r}\right)^2 \le \frac{2}{3}$ (cf. \cite[eq. (5.7)]{BLZ25}) gives $\mathcal{F}_1(\tau_h) \le ( -\frac{\varphi}{\rho_h} + \frac{2}{3} \, \CG ) \,  \sS(\tau_h)$.
It then suffices to prove
$-\frac{\varphi(r)}{r} + \frac{2}{3} \,  \CG(r) \le 1$ whenever $r\in (0,r_*).$
Since $\varphi(r) = r^{-1} (\CG(r)-1 )$, it is equivalent to show $(\frac{2}{3}r^2 -1) \, \mathcal{G}(r) \le r^2 -1$. The case where $r \in [1, r_*)$ can be easily derived by the fact $ 0\le \mathcal{G}(r) \le 1$.
For the case $r\in (0,1)$, we use the inequality
$$\CG(r) \ge r \cot r = \tfrac{r}{\sin r} \cos r \ge (1 + \tfrac{r^2}{6})  (1 - \tfrac{r^2}{2}) \ge (1-r^2 ) (1-\tfrac{2}{3}r^2 )^{-1},$$ where in the second ``$\ge$" we 
use  \cite[eq. (5.7), (3.5)]{BLZ25}. 

To prove (2),  by the fact that $\Lambda(r) \sim 1$ provided $0 \le r \le r_*$, we need to show 
\begin{equation*}
0 \le \rho_h \left( \varphi \left( \Tg + 4 u \tau_h^T \mathbb{A} \tau_h \right) + (1+ \rho_h \varphi ) \left( \Tg' + 4 \tau_h^T \mathbb{A} \tau_h u' \right) \right) \lesssim \sS(\tau_h).
\end{equation*}
 By \eqref{NST}, it remains to establish:
\begin{align}
& 0 \le u \varphi + (1+ \rho_h \varphi)u' \lesssim \frac{1}{\rho_h (1+ \rho_h \varphi)}, \label{equ-F21}\\
& 0 \le \varphi \Tg + (1+ \rho_h \varphi) \Tg' \lesssim \frac{1}{\rho_h (1+ \rho_h \varphi)} \left( h^2 +  \sumj
	\left( \frac{a_j \rho_h}{\sin {(a_j \rho_h)}} \right)^2 \, |x_{(j)}|^2 \right).\label{equ-F22}
\end{align}

First, \eqref{equ-F21} is a consequence of the identity 
$
u \varphi + (1+ \rho_h \varphi)u' 
= \frac{-\rho_h \varphi' + \varphi (1+ \rho_h \varphi)}{\rho_h^2 (1+ \rho_h \varphi)}
$
as well as the inequality
\begin{equation}\label{equ-rho'}
0\le -\rho_h \varphi' + \varphi(1+ \rho_h \varphi) \lesssim \rho_h.
\end{equation}
To verify \eqref{equ-rho'},  by \eqref{equ-f'} we know $-\frac{\varphi(r)}{r}$ is increasing on  $(0, r_*]$. Thus
\[
0 \le  \rho_h^2 \left( - \frac{\varphi(r)}{r} \right)' \Big|_{r=\rho_h} = -\rho_h \varphi' + \varphi \le -\rho_h \varphi' + \varphi(1+ \rho_h \varphi) \le -\rho_h \varphi' \lesssim \rho_h,
\]
as desired.
To prove \eqref{equ-F22}, a direct  calculation and \eqref{fTA} yield 
\begin{align*}
\begin{split}
	& \rho_h \, \Big( \varphi \Tg + (1+ \rho_h \varphi) \Tg' \Big) = (1+ \rho_h \varphi)(\Tg + \rho_h  \Tg') - \Tg \\
	& = \left( \frac{\varphi^2 - \varphi'}{1+ \rho_h \varphi}  -u \right) \left( h^2 + \sumj
	\left( \frac{a_j \rho_h}{\sin {(a_j \rho_h)}} \right)^2 \, |x_{(j)}|^2 \right) + \sumj a_j^2  |\xj|^2 \left( \mu'(a_j \rho_h) -\frac{\mu(a_j \rho_h)}{a_j \rho_h} \right).
\end{split}
\end{align*}
By \eqref{equ-rho'} and the fact $u \ge 0$, we see that $0 \le  \frac{\varphi^2 - \varphi'}{1+ \rho_h \varphi}  - u \le \frac{\varphi^2 - \varphi'}{1+ \rho_h \varphi} \lesssim \frac{1}{1+ \rho_h \varphi}. $ Moreover, note that $\frac{1}{1+ \rho_h \varphi} \ge 1$, and by \eqref{MU},
\begin{equation*}
     \mu'(r) - \frac{\mu(r)}{r}  = r \left( \frac{\mu(r)}{r} \right)' \ge 0, \qquad  \left( \frac{\mu(r)}{r} \right)' \Big|_{r=\rho_h} \lesssim 1.
\end{equation*}
 The proof is then finished.
\end{proof}

\subsection{Asymptotics of the Poisson Kernel}\label{sec-asypoi}

Recall from \eqref{equ-poisson} that 
$$ Q_h(g;\lambda) = \mathbf{V}(\lambda) \left( \Phi(g;\lambda) + h^2 \right)^{-N} .$$ By Lemma \ref{lem-he} (1) and arguing as in \cite[Lemma 3.3]{BLZ25}, we obtain:
\begin{equation}\label{equ-QQ}
Q_h(g;\lambda + \mathrm{i} r \hat{\tau_h} \,) \le \mathbf{V}(\mathrm{i} r \hat{\tau_h} \, ) \, \mathbf{V}( \lambda ) \Lambda(r)^N \Big( \sS(r \hat{\tau_h}) + 4 \Lambda(r) \lal  \Big)^{-N}, \ \ r\in (0,\rho_h].
\end{equation}
Moreover, by Cauchy fundamental theorem and \eqref{equ-poisson},
\begin{equation*}
P_{h}(g) = \frac{2^m \Gamma(N) }{\pi^N} h \int_{\R^m} Q_h(g;\lambda + i \tau_h) \, d\lambda,\quad g\in\GG,\ \  h>0.
\end{equation*}

As an analogue of \cite[Proposition 5.5]{BLZ25}, one can similarly prove the following result using Proposition \ref{prop-Hess}, \eqref{equ-QQ} and Lemma \ref{lem-he} (4). 

\begin{prop}\label{prop-poi}
With Assumption \ref{assumption}, it holds uniformly in $0  < h \le n^{-\frac{1}{2}} d_{\mathbb{B}} (g)$ that
\begin{equation}\label{equ-Ph}
P_h(g) \sim \frac{\Gamma(N)}{\pi^{N}}h\left(\frac{8\pi}{N}\,\sS(\tau_h)\right)^{\frac{m}{2}} Q_h(\mathrm{i} \tau_h)\, \frac{1}{\sqrt{\det (-\mathrm{Hess}_\lambda \, \sS(\tau_h))}}.
\end{equation}
\end{prop}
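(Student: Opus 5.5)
We prove Proposition \ref{prop-poi} by a Laplace-type method, following \cite[Proposition 5.5]{BLZ25}. By the contour-shifted representation, $P_h(g)=\frac{2^m\Gamma(N)}{\pi^N}h\int_{\R^m}Q_h(g;\lambda+\mathrm{i}\tau_h)\,d\lambda$. So it suffices to show
\[
\int_{\R^m}Q_h(g;\lambda+\mathrm{i}\tau_h)\,d\lambda\sim Q_h(\mathrm{i}\tau_h)\Big(\tfrac{2\pi}{N}\,\sS(\tau_h)\Big)^{\frac m2}\det\big(-\mathrm{Hess}_\lambda\sS(\tau_h)\big)^{-\frac12}\cdot 4^{m/2}.
\]
Write $Q_h(g;\lambda+\mathrm{i}\tau_h)=Q_h(\mathrm{i}\tau_h)\exp(-N\Psi(\lambda))\cdot(\text{ratio of }\mathbf{V}\text{-type factors})$, where $\Psi(\lambda)=\log S_h(g;\lambda+\mathrm{i}\tau_h)-\log\sS(\tau_h)$. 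Since $\tau_h$ is a critical point of $\sS$, and $S_h(g;\mathrm{i}\cdot)=\sS$, we have $\nabla\Psi(0)=0$. Moreover, by analyticity, the real Hessian of $\Psi$ at $0$ equals $-\mathrm{Hess}\,\sS(\tau_h)/\sS(\tau_h)$ (up to sign conventions, via $\lambda\mapsto \mathrm{i}\lambda$). By Proposition \ref{prop-Hess}, this is comparable to $\mathbb{I}_m+\mathbb{A}/\sS(\tau_h)$.

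We split $\R^m$ into a local region $\{|\Theta\lambda|\lesssim N^{-1/2}\}$, where $\Theta=(-\mathrm{Hess}\,\sS(\tau_h)/\sS(\tau_h))^{1/2}$, and its complement.

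\textbf{Local region.} Here we Taylor expand $\Psi$ to second order and bound the cubic remainder. Lemma \ref{lem-he} (3), which uses Assumption \ref{assumption}, keeps $\tau_h$ uniformly away from the singularities at $|\lambda|=\pi$. Hence the derivatives of $\log\Lambda$ and of the $\coth$ terms are uniformly bounded, and the $\lambda^T\mathbb{A}\lambda$ part is exactly quadratic. The cubic error times $N$ is therefore $O(1)$ on this region. This yields a Gaussian integral $\sim (2\pi/N)^{m/2}\det\Theta^{-1}$ up to $C^m$ factors, which are absorbed as in \cite{BLZ25}. The $\mathbf{V}$-type prefactor $(\sinh|\lambda|/|\lambda|)^{m+1/2}$, evaluated at complex arguments near $\mathrm{i}\tau_h$, is comparable to its value at $\mathrm{i}\tau_h$, since $m+\tfrac12\ll N$ effects only cost $e^{O(m|\lambda|)}$ with $|\lambda|\lesssim N^{-1/2}$.

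\textbf{Lower bound.} Positivity of the real part must be checked so that the oscillation does not cancel. We take real parts: on the local region $\mathrm{Re}\,Q_h\ge \tfrac12|Q_h|$ because $N\,\mathrm{Im}\Psi=O(1)$ small after shrinking the constant. The hypothesis $h\le n^{-1/2}d_{\mathbb{B}}(g)$ enters through Proposition \ref{prop-Hess} and Lemma \ref{lem-he} (2), guaranteeing $\sS(\tau_h)\sim d_{\mathbb{B}}(g)^2$-scale comparability. Lemma \ref{lem-he} (4) is used to control the linear term $t\cdot\lambda$, so that $|t|\,|\lambda|/\sS(\tau_h)$ is $O(N^{-1/2})$ on the local region.

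\textbf{Tail region (main obstacle).} Here we use \eqref{equ-QQ} with $r=\rho_h$, namely
\[
|Q_h(\lambda+\mathrm{i}\tau_h)|\le \mathbf{V}(\mathrm{i}\tau_h)\mathbf{V}(\lambda)\Lambda^N\big(\sS(\tau_h)+4\Lambda\lambda^T\mathbb{A}\lambda\big)^{-N}.
\]
This bound lacks Gaussian decay in the isotropic directions when $\mathbb{A}$ is small. For those directions we must instead use the modulus of the $\coth$ and $\mathbf V$ factors, i.e.\ $|\Phi+h^2|$ grows like $\sum|\xj|^2 a_j|\lambda|$ and $\mathbf{V}(\lambda)$ decays exponentially. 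Combining with Proposition \ref{prop-Hess} gives decay $\big(1+c|\Theta\lambda|^2\big)^{-N/2}$ or $e^{-cN\min(|\Theta\lambda|,|\Theta\lambda|^2)}$. Its integral outside the local region is $\ll (2\pi/N)^{m/2}\det\Theta^{-1}$, using $m\lesssim \log n$ from \eqref{equ-m} so that $C^m$ losses are harmless. This uniform tail estimate, handled carefully in the anisotropic $\mathbb{A}$ directions, is the step I expect to require the most work. Assembling the pieces gives \eqref{equ-Ph}.
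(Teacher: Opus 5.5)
\textbf{Gap: the argument is not uniform in $m$.} Your overall route is the one the paper intends. The paper simply invokes the Laplace-method argument of \cite[Proposition 5.5]{BLZ25}, fed by Proposition \ref{prop-Hess}, \eqref{equ-QQ} and Lemma \ref{lem-he} (4). As written, however, your argument does not prove the statement. The ``$\sim$'' there has constants independent of $m$, and $m$ can be as large as $c\log n$; the precise factor $(3a_\ell^2/(2\CCC))^{m/2}$ in Theorem \ref{thm-main1} relies on this.

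\emph{A small slip.} The correct reduction is $\int_{\R^m}Q_h(g;\lambda+\mathrm{i}\tau_h)\,d\lambda\sim Q_h(\mathrm{i}\tau_h)\,(2\pi\sS(\tau_h)/N)^{m/2}\det(-\mathrm{Hess}_\lambda\sS(\tau_h))^{-1/2}$, with no $4^{m/2}$, since $2^m(2\pi)^{m/2}=(8\pi)^{m/2}$. This is exactly what your Gaussian computation $(2\pi/N)^{m/2}\det\Theta^{-1}$ produces.

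\emph{The real problem: the local region is at the wrong scale.} After $z=\sqrt N\,\Theta\lambda$, the weight $e^{-\frac N2|\Theta\lambda|^2}$ becomes a standard Gaussian on $\R^m$, whose mass sits near $|z|\approx\sqrt m$. Your region $\{|\Theta\lambda|\lesssim N^{-1/2}\}$ is $\{|z|\le C\}$, which carries only a fraction $\lesssim C^m/(2^{m/2}\Gamma(\frac m2+1))\approx(eC^2/m)^{m/2}$ of that mass. Two things follow:
\begin{itemize}
\item Your lower bound, which comes only from the local region, is off by a super-exponential factor in $m$.
\item Your claim that the tail integral is $\ll(2\pi/N)^{m/2}\det\Theta^{-1}$ is false: the tail contains the bulk.
\end{itemize}
Saying that $C^m$ losses are harmless because $m\lesssim\log n$ does not help, since $C^m$ can be $n^{c}$ and nothing in your tail estimate beats that.

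\textbf{Fix.} Take the local region $\{|\Theta\lambda|\le N^{-\alpha}\}$ with $\frac13<\alpha<\frac12$, e.g.\ $\alpha=\frac25$.
\begin{itemize}
\item On this region, $N$ times the cubic Taylor remainder is $O(N^{1-3\alpha})=o(1)$. This uses Lemma \ref{lem-he} (3) to stay away from $|\lambda|=\pi$, and Lemma \ref{lem-he} (4) applied componentwise, $|t_l\lambda_l|/\sS(\tau_h)\lesssim|(\Theta\lambda)_l|$, rather than with $|t|\,|\lambda|$.
\item The ratio of the $(\sinh|\cdot|/|\cdot|)^{m+\frac12}$ factors is $e^{O(mN^{-\alpha})}=1+o(1)$.
\item Since $\sqrt{m/N}\ll N^{-\alpha}$, the truncated Gaussian keeps at least half of the full mass. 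This gives both bounds with absolute constants; your real-part argument then works.
\end{itemize}

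\textbf{Tail.} On the complement, \eqref{equ-QQ} with $r=\rho_h$ reads $|Q_h(g;\lambda+\mathrm{i}\tau_h)|\le Q_h(\mathrm{i}\tau_h)\,\mathbf{V}(\lambda)\,\bigl(1+4\Lambda(\rho_h)\lambda^T\mathbb{A}\lambda/\sS(\tau_h)\bigr)^{-N}$. Contrary to your remark, this bound already gives Gaussian decay of rate $\gtrsim\eta_0N$ in the isotropic directions. Indeed $\mathbf{V}(\lambda)\le(|\lambda|/\sinh|\lambda|)^{k_\ell}$ and $k_\ell\ge\eta_0 n$, so no separate analysis of $|\Phi+h^2|$ is needed. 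Combined with $-\mathrm{Hess}_\lambda\sS(\tau_h)\sim\sS(\tau_h)\mathbb{I}_m+\mathbb{A}$, the tail is bounded by $C^m e^{-cN^{1-2\alpha}}(2\pi/N)^{m/2}\det\Theta^{-1}$. Only at this point does $m\lesssim\log n$ make the $C^m$ loss (from the Loewner comparison of determinants and the non-sharp decay rate) negligible.
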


\subsection{Proof of Theorem \ref{thm-main1}}\label{sec-pf}

The proof of Theorem \ref{thm-main1} is similar to that of \cite[Theorem 5.1]{BLZ25}.
If $n$ large, 
we apply Theorem \ref{thm-voll}, Lemma \ref{lem-he} (2) and Propositions \ref{prop-Hess}-\ref{prop-poi}. 
When $n\lesssim 1$, we can combine the Vitali covering Lemma with Theorem \ref{thm-voll} to get the conclusion.\\ 

\subsection{Results on a Homogeneous Norm $d_\mathbb{G}$}\label{ssec-dk}

 Remark that the following characterization holds for the Kor\'{a}nyi norm on H-type groups
\[
d_{\mathrm{K}}(x,t)^2 = \left(|x|^4 + 16|t|^2\right)^{\frac{1}{2}} = \sup_{\lambda \in B_{\R^m}(0,\pi)} \frac{\sin|\lambda|}{|\lambda|} \left( |x|^2 |\lambda| \cot |\lambda| + 4 t \cdot \lambda \right).
\]
Inspired by this, we introduce its counterpart on $\HH = \G$ by 
\begin{equation}\label{equ-defdH}
d_{\HH}(g)^2 \, : = \sup_{\lambda \in B_{\R^m}(0, \pi)} \Lambda\left( \frac{ |\lambda| }{a_{\ell}} \right) \, \left( \sumj  \left(\frac{a_j}{a_{\ell}} |\lambda| \right) \cot{ \left(\frac{a_j}{a_{\ell}} |\lambda| \right)} \, |\xj|^2 + \frac{4}{a_\ell} \, t \cdot \lambda \right).
\end{equation} 

Let $B_{\HH}(o,r)$ denote the ball centered at $o$ with radius $r>0$ associated with $d_{\HH}$ , and $M_\HH$ the corresponding centered maximal operator associated to $d_{\HH}$. 
Similar to the proof of Theorem \ref{thm-voll} and Theorem \ref{thm-main1}, we can readily establish the following generalization of the known results, obtained in \cite{LQ14}, \cite{BLZ25}, on H-type groups w.r.t. $d_{\mathrm{K}}$.

\begin{theorem}\label{thm-2}
We have
$|B_{\HH}(o,1)| \, \sim  \, |B_{\R^{2n+m}}(0,1)|   \left(   \CCC/ 8 \right)^{\frac{m}{2}}. $
Moreover, if Assumption \ref{assumption} holds, then 
$\| M_{\HH} \|_{L^1 \longrightarrow L^{1,\infty}} \lesssim  \left(   a_\ell^2 \, \CCC^{-1}   \right)^{\, \frac{m}{2}} \, n.$\\
\end{theorem}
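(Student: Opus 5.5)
The plan is to copy the two-part structure used for Theorems \ref{thm-voll} and \ref{thm-main1}: first a volume computation for $d_{\HH}$, then a subordination bound of the type \eqref{equ-HDS} relating $\chi_{B_{\HH}(o,r)}$ to an average of the Poisson kernel $P_h$ of the canonical sub-Laplacian ($\mathbb{B}=0$).

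\textbf{Volume.} By the same scaling in $t$ as before, take $a_\ell=1$. Then $d_{\HH}(g)^2=\sup_{\lambda\in B_{\R^m}(0,\pi)}\sS_0(g,0;\lambda)$, with $\sS_0$ the function \eqref{equ-sS} for $h=0$ and $\mathbb{A}=0$, except that the weights $\mathfrak{c}_j$ are replaced by $k_j/n$ (the exponents that make $d_{\HH}$ homogeneous; the Poisson kernel part will absorb the difference between $n$ and $N$).

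\begin{enumerate}
\item Adapt Lemma \ref{lem-nabla}, Lemma \ref{Lm43} and Proposition \ref{prop-max} with $h=0$. This gives a unique maximizer $\tau$ and a diffeomorphism $t=\MC(\tau)=\frac14\Tg(|\tau|)\tau$ from $B(0,r_*)$ onto $\R^m$. By \eqref{NST}, $d_{\HH}(x,\MC(\theta))^2=\frac{\Lambda(\rho)}{\CG(\rho)}\sum_j\big(\frac{a_j\rho}{\sin a_j\rho}\big)^2|\xj|^2$ with $\rho=|\theta|$.
\item Run the argument of Subsections \ref{ssec-pfvol}--\ref{I_2} in $(x,\theta)$ coordinates, with $\Sa$ and $\Sb$ replaced by the level sets of this new radial profile. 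For small $\rho$,
\[
\frac{\Lambda(\rho)}{\CG(\rho)}\Big(\frac{a_j\rho}{\sin a_j\rho}\Big)^2 = 1+\rho^2\Big(\frac{a_j^2}{3}+\frac{\CCC}{6}-\frac{\CCC}{6}\cdot\ldots\Big)+O(\rho^4),
\]
which after averaging over $x$ (using Lemma \ref{lem-WW}) produces an effective quadratic coefficient $\sim\CCC\rho^2/2$. Also $\det D\MC(0)=4^{-m}\,\Tg(0)^m$ with $\Tg(0)\approx|\mathbb{W}x|^2\cdot\frac{2}{3}$-type constants.
\item Redo the computation leading to \eqref{equ-IB} using Lemma \ref{lem-I0j} with $j=m$. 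This gives $|B_{\HH}(o,1)|\sim|B_{\R^{2n+m}}(0,1)|\,(\CCC/8)^{m/2}$. The exact constant $8$ only needs to be correct up to $C^m$, and it comes from matching $E_{m,n}$ against $|B_{\R^{2n+m}}|$ through Stirling, exactly as in the $\mathbb{B}=0$ case of \eqref{equ-con}.
\item Bound the remainder region with $\rho$ bounded below as in Subsection \ref{I_2}: use the annular decomposition near $\rho\to\pi$ and the decay $(\sin\delta/\delta)^{2k_\ell-m\gamma-1/2}$. This step needs $\Lambda/\CG$ to blow up, and it does, since $\CG\to0$ as $\rho\to r_*$.
\end{enumerate}

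\textbf{Maximal function.} Under Assumption \ref{assumption}, follow Section \ref{sec-app} with $\mathbb{A}=0$. Proposition \ref{prop-poi} together with Proposition \ref{prop-Hess} gives $P_h(g)\gtrsim \frac{\Gamma(N)}{\pi^N}h\,N^{-m/2}\,\sS(\tau_h)^{m/2-N}\Lambda(\rho_h)^{N}\cdots/\sS(\tau_h)^{m/2}$. The key comparison is $\sS(\tau_h)\le d_{\HH}(g)^2+Ch^2$ for $h\le n^{-1/2}d_{\HH}(g)$. This is easier than Lemma \ref{lem-he}(2), because $d_{\HH}$ is built from the same $\Lambda$; the only discrepancy is the weights $k_j/N$ versus $k_j/n$, which changes $\Lambda^{N}$ by a factor $\Lambda^{O(m)}\sim C^m$ thanks to Lemma \ref{lem-he}(3). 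Averaging over $h\in(0,r/\sqrt n)$ and using $(1+C/n)^{-N}\gtrsim1$ then yields \eqref{equ-HDS} with constant $n(a_\ell^2\CCC^{-1})^{m/2}$, after dividing by the volume from the first part. Stein's semigroup maximal theorem finishes the argument, as in \cite{Li09}. For $n\lesssim1$, Vitali covering plus doubling of $|B_{\HH}|$ suffices.

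\textbf{Main obstacle.} The hard step is the second-order expansion in step 2, where the $\CCC/8$ constant has to come out. In contrast to $d_{\mathbb{B}}$, the factor $\Lambda/\CG$ mixes all the $a_j$, so the Jacobian and the $\rho^2$ coefficient must be computed carefully and then controlled uniformly via Lemma \ref{lem-WW} and Corollary \ref{corb}. I would first verify the $\ell=1$ case against \cite{LQ14} to fix the constants.
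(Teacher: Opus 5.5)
\textbf{Verdict: there are genuine gaps.} Your overall route is what the paper means by ``similar to the proof of Theorems~\ref{thm-voll} and~\ref{thm-main1}'': parametrize $B_{\HH}(o,1)$ in $(x,\theta)$ coordinates via $\MC$ with $h=0$, $\mathbb{A}=0$, redo Section~\ref{sec-vol}, then run Section~\ref{sec-app} with $\mathbb{A}=0$.

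\textbf{First gap: you accept losses of size $C^m$.} You write ``the exact constant $8$ only needs to be correct up to $C^m$'' and ``$\Lambda^{O(m)}\sim C^m$''. In this paper every $\sim$ and $\lesssim$ has constants independent of $m$ and $n$. The content of both claims is the base of the $\frac m2$-th power: $\CCC/8$ for the volume, and $a_\ell^2/\CCC$ for $M_{\HH}$. The latter is exactly the ratio of $\CCC/8$ to the factor $(8\pi/N)^{m/2}$ in Proposition~\ref{prop-poi}. Since $\CCC$ can be arbitrarily close to $a_\ell^2$ and $m$ can be of order $\log n$, a $C^m$ loss gives a strictly weaker theorem. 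Both of your losses come from concrete mistakes.
\begin{itemize}
\item \emph{Wrong weights in $d_{\HH}$.} The $\Lambda$ in \eqref{equ-defdH} is the one in \eqref{equ-f'}, with the weights $\mathfrak{c}_j$ of \eqref{equ-cj}, so $\mathfrak{c}_\ell=(k_\ell+m+\frac12)/N$, not $k_j/n$. Homogeneity holds for any choice of weights, so it does not single out $k_j/n$. With the correct $\Lambda$ (and $a_\ell=1$, $\mathbb{A}=0$) you have $\sS(\lambda)=\Lambda(|\lambda|)(\phi(g;\lambda)+h^2)$. Hence $d_{\HH}(g)^2\le\sS(\tau_h)\le d_{\HH}(g)^2+h^2$ with no loss, and no transfer between two different $\Lambda$'s is needed.
\item \emph{Wrong Jacobian.} From \eqref{equ-f'}--\eqref{fTA}, $u(0)=\frac13 c'$ with $c':=\sum_j\mathfrak{c}_ja_j^2$, and $a_j\mu(a_jr)/r\to\frac23a_j^2$. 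So at $h=0$
\[
\Tg(0)=\tfrac{c'}{3}\,|x|^2+\tfrac23\,|\mathbb{W}x|^2 ,
\]
not $\approx\frac23|\mathbb{W}x|^2$. Keeping only the second term reproduces the constant $\CCC/12$ of $d_{\mathbb{B}}$ with $\mathbb{B}=0$.
\end{itemize}

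\textbf{The correct volume computation.} Your displayed expansion should read $\frac{\Lambda}{\CG}\big(\frac{a_j\rho}{\sin a_j\rho}\big)^2=1+\rho^2\big(\frac{c'}{6}+\frac{a_j^2}{3}\big)+O(\rho^4)$. Hence $d_{\HH}(x,\MC(\theta))^2=|x|^2+\frac12\Tg(0)|\theta|^2+O(|x|^2|\theta|^4)$. Also $\det D\MC(0)=(\Tg(0)/4)^m$, and $\det D\MC(\theta)\sim\det D\MC(0)$ on $|\theta|^2\lesssim\K$ because $m\K\lesssim1$. So the $t$-slice has volume $\sim|B_{\R^m}(0,1)|\,(1-|x|^2)^{m/2}(\Tg(0)/8)^{m/2}$.
\begin{itemize}
\item On the main region $|x|^m\sim1$.
\item The argument for \eqref{equ-IB}, with $12b_j/R^2$ replaced by $c'/2$, gives $\big(\frac{2\CCC+c'}{3}\big)^{m/2}$.
\item $1\le c'/\CCC\le 1+(m+\frac12)/k_\ell$, and $4k_\ell\ge 2^{m/2}$ by \eqref{equ-m}, so $(c'/\CCC)^{m/2}\lesssim1$.
\item $|B_{\R^m}(0,1)|\int_{B_1}(1-|x|^2)^{m/2}dx=|B_{\R^{2n+m}}(0,1)|$.
\end{itemize}
Together these yield $(\CCC/8)^{m/2}$. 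For the remainder region, the annuli must accumulate at $r_*$, where $\CG\to0$, not at $\pi$.

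\textbf{Second gap: Propositions~\ref{prop-Hess} and~\ref{prop-poi} do not give the maximal bound.} With $a_\ell=1$,
\[
Q_h(\mathrm{i}\tau_h)=\Big(\frac{\sin\rho_h}{\rho_h}\Big)^{m+\frac12}\sS(\tau_h)^{-N}.
\]
In Theorem~\ref{thm-main1} this factor is absorbed through Lemma~\ref{lem-he}(2). There $\sS(\tau_h)\le(\DB(g)^2+h^2)\Lambda(\rho_h)$, and $\Lambda^{\frac m2-N}(\sin\rho/\rho)^{m+\frac12}=\Lambda^{\frac m2}\prod_j(a_j\rho/\sin a_j\rho)^{k_j}\ge1$ because $2k_\ell\ge m+1$.

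For $d_{\HH}$ you only have $\sS(\tau_h)\le d_{\HH}(g)^2+h^2$. So the factor $(\sin\rho_h/\rho_h)^{m}$ is of size $c^m$ whenever $\rho_h\gtrsim1$, e.g. for $g$ in the ball near the $t$-axis. It must be recovered from $\det(-\mathrm{Hess}_\lambda\sS(\tau_h))^{-1/2}$. Roughly, you need $\mathcal{F}_1(\tau_h)\le(1+O(\frac1m))(\sin\rho_h/\rho_h)^2\,\sS(\tau_h)$, whereas Proposition~\ref{prop-Hess} only gives $\mathcal{F}_1(\tau_h)\le\sS(\tau_h)$.
\begin{itemize}
\item For H-type groups with the Kor\'anyi norm this is the exact identity $\mathcal{F}_1=(\sin\rho/\rho)^2\sS$ at $h=0$, with $r_*=\pi/2$.
\item For $\ell\ge2$ it is not automatic. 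There $r_*>\pi/2$, and near the $t$-axis $\rho_h\approx r_*$. Then $\mathcal{F}_1(\tau_h)/\sS(\tau_h)\approx-\varphi(r_*)/r_*=r_*^{-2}>(\sin r_*/r_*)^2$.
\end{itemize}
This is where the H-type argument does not transfer verbatim, and your sketch, including the display with ``$\Lambda(\rho_h)^N\cdots$'', does not address it. As written, the bound $(a_\ell^2\CCC^{-1})^{m/2}n$ does not follow.

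A minor point: the last step is the Hopf--Dunford--Schwartz ergodic theorem applied to $\fint_0^s e^{-h\sqrt{-\Delta}}\,dh$, which is what gives weak $(1,1)$. Stein's semigroup maximal theorem only covers $p>1$.
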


\section*{Acknowledgement}
	\addcontentsline{toc}{section}{Acknowledgement}
We thank the anonymous referees for the careful reading of the manuscript and for
giving us several useful suggestions on the presentation of the paper. We also thank Yimeng Chen for her helpful advice on the English expression.

\noindent
\\
\\
{\small\it\\
\noindent
Cheng Bi, Hong-Quan Li}\\
{\small\it   School of Mathematical Sciences,
Fudan University}\\
{\small\it  220 Handan Road,
Shanghai 200433 China}\\
{\small\it E-mail:\;\;\tt cbi21@m.fudan.edu.cn, hongquan\_li@fudan.edu.cn}\\


\small
\begin{thebibliography}{100}
	\bibitem{BGG96}
	R.~Beals, B.~Gaveau, and P.~Greiner, ``The {G}reen function of model step two
	hypoelliptic operators and the analysis of certain tangential {C}auchy
	{R}iemann complexes,'' {\em Adv. Math.}, vol.~121, no.~2, pp.~288--345, 1996.

    \bibitem{Bhatia97}
	R.~Bhatia, {\em Matrix analysis}, vol.~169 of {\em Graduate Texts in
		Mathematics}.
	\newblock Springer-Verlag, New York, 1997.

    \bibitem{BLZ25}
	C.~Bi, H.-Q. Li, and Y.~Zhang, ``Centered {H}ardy-{L}ittlewood maximal
	functions on {H}-type groups revisited,'' {\em Math. Ann.}, vol.~391, no.~3,
	pp.~3765--3797, 2025.

    \bibitem{CCFI}
	O.~Calin, D.-C. Chang, K.~Furutani, and C.~Iwasaki, {\em Heat kernels for
		elliptic and sub-elliptic operators}.
	\newblock Applied and Numerical Harmonic Analysis, Birkh\"{a}user/Springer, New
	York, 2011.
	\newblock Methods and techniques.
	
	\bibitem{EGS18}
	N.~Eldredge, M.~Gordina, and L.~Saloff-Coste, ``Left-invariant geometries on
	{$\rm SU(2)$} are uniformly doubling,'' {\em Geom. Funct. Anal.}, vol.~28,
	no.~5, pp.~1321--1367, 2018.

	\bibitem{EGS24}
	N.~{Eldredge}, M.~{Gordina}, and L.~{Saloff-Coste}, ``{Uniform doubling for
		abelian products with $\operatorname{SU}(2)$},'' {\em arXiv e-prints},
	arXiv:2412.17102, Dec. 2024.

    \bibitem{GR15}
	I.~S. Gradshteyn and I.~M. Ryzhik, {\em Table of integrals, series, and
		products}.
	\newblock Elsevier/Academic Press, Amsterdam, eighth~ed., 2015.
	\newblock Translated from the Russian, Translation edited and with a preface by
	Daniel Zwillinger and Victor Moll, Revised from the seventh edition
	[MR2360010].

    \bibitem{K02}
	S.~G. Krantz and H.~R. Parks, {\em The implicit function theorem}.
	\newblock Birkh\"{a}user Boston, Inc., Boston, MA, 2002.
	\newblock History, theory, and applications.

	\bibitem{Li09}
	H.-Q. Li, ``Fonctions maximales centr\'{e}es de {H}ardy-{L}ittlewood sur les
	groupes de {H}eisenberg,'' {\em Studia Math.}, vol.~191, no.~1, pp.~89--100,
	2009.
	
	\bibitem{Li13}
	H.-Q. Li, ``Remark on ``{M}aximal functions on the unit {$n$}-sphere'' by
	{P}eter {M}. {K}nopf (1987),'' {\em Pacific J. Math.}, vol.~263, no.~1,
	pp.~253--256, 2013.

    \bibitem{Li21}
	H.-Q. Li, ``{The Carnot-Carath{\'e}odory distance on $2$-step groups},'' {\em
		arXiv e-prints}, Dec. 2021.
	
	\bibitem{LL12}
	H.-Q. Li and N.~Lohou\'{e}, ``Fonction maximale centr\'{e}e de
	{H}ardy--{L}ittlewood sur les espaces hyperboliques,'' {\em Ark. Mat.},
	vol.~50, no.~2, pp.~359--378, 2012.
	
	\bibitem{LQ14}
	H.-Q. Li and B.~Qian, ``Centered {H}ardy-{L}ittlewood maximal functions on
	{H}eisenberg type groups,'' {\em Trans. Amer. Math. Soc.}, vol.~366, no.~3,
	pp.~1497--1524, 2014.

    \bibitem{M76}
	J.~Milnor, ``Curvatures of left invariant metrics on {L}ie groups,'' {\em
		Advances in Math.}, vol.~21, no.~3, pp.~293--329, 1976.
	
	\bibitem{NT10}
	A.~Naor and T.~Tao, ``Random martingales and localization of maximal
	inequalities,'' {\em J. Funct. Anal.}, vol.~259, no.~3, pp.~731--779, 2010.

    \bibitem{SS83}
	E.~M. Stein and J.-O. Str\"{o}mberg, ``Behavior of maximal functions in {${\bf
			R}^{n}$} for large {$n$},'' {\em Ark. Mat.}, vol.~21, no.~2, pp.~259--269,
	1983.
	
	\bibitem{VSC92}
	N.~T. Varopoulos, L.~Saloff-Coste, and T.~Coulhon, {\em Analysis and geometry
		on groups}, vol.~100 of {\em Cambridge Tracts in Mathematics}.
	\newblock Cambridge University Press, Cambridge, 1992.
	
\end{thebibliography}
\end{document}